\newtheorem{Def}{\bf Definition}[section]
\newtheorem{Thm}[Def]{\bf Theorem}
\newtheorem{Lem}[Def]{\bf Lemma}
\newtheorem{Cor}[Def]{\bf Corollary}
\newtheorem{Pro}[Def]{\bf Proposition}
\newtheorem{Rem}[Def]{\bf Remark}
\newtheorem*{claim}{\bf Claim}
\newtheorem{ThmA}{\bf Theorem}
\newtheorem{CorA}[ThmA]{\bf Corollary}
\newcommand{\R}{\mathbb{R}}
\newcommand{\C}{\mathbb{C}}
\newcommand{\Z}{\mathbb{Z}}
\newcommand{\F}{\mathbb{F}}
\newcommand{\N}{\mathbb{N}}
\newcommand{\B}{\mathbb{B}}
\newcommand{\M}{\mathbb{M}}
\newcommand{\G}{\mathbb{G}}
\newcommand{\Ad}{\operatorname{Ad}}
\newcommand{\id}{\text{\rm id}}
\newcommand{\Tr}{\mathord{\text{\rm Tr}}}
\newcommand{\ovt}{\mathbin{\overline{\otimes}}}
\newcommand{\ota}{\otimes_{\rm alg}}
\newcommand{\op}{\circ}
\title{\bf Unique prime factorization for infinite tensor product factors}
\author{Yusuke Isono\thanks{Research Institute for Mathematical Sciences, Kyoto University, 606-8502, Kyoto, Japan \protect \\  E-mail: \texttt{isono@kurims.kyoto-u.ac.jp}}}
\date{}
\begin{document}
\maketitle

\begin{abstract}
	In this article, we investigate a unique prime factorization property for infinite tensor product factors. We provide several examples of type II and III factors which satisfy this property, including all free product factors with diffuse free product components. In the type III setting, this is the first classification result for infinite tensor product non-amenable factors. 
Our proof is based on Popa's intertwining techniques and a characterization of relative amenability on the continuous cores.
\end{abstract}

\section{Introduction}

	The tensor product construction is a fundamental tool in von Neumann algebra theory. It has been used to construct interesting examples of von Neumann algebras. 
In particular, infinite tensor product factors $\ovt_{n\in \N}(M_n,\varphi_n)$, where $M_n$ are type I factors equipped with faithful normal states $\varphi_n$, attracted strong attention since it appears in the quantum field theory. We call such factors \textit{Araki--Woods factors}. All Araki--Woods factors are classified in terms of $(M_n,\varphi_n)$ \cite{Po67,AW68} and this led to the celebrated classification of \textit{amenable factors} due to Connes \cite{Co75} (see also \cite{Kr75,Ha85}).

	It is then natural to consider a classification problem of infinite tensor products constructed from \textit{non-amenable} factors. More precisely, we are interested in thinking about a classification of factors $\ovt_{n\in \N}(M_n,\varphi_n)$ in terms of non-amenable factors $(M_n,\varphi_n)$. 

	To investigate this problem, we should require some \textit{rigidity} of $M_n$. Indeed any (non-type I) infinite tensor product factor $\ovt_{n\in \N}(M_n,\varphi_n)$ is known to be \textit{McDuff}, meaning that it is stable under taking tensor products with the hyperfinite $\rm II_1$ factor $R$. Then using the decomposition $R=\ovt_{n\in \N}(R,\tau)$ where $\tau$ is the trace on $R$, one has
	$$ \ovt_{n\in \N}(M_n,\varphi_n)\simeq \ovt_{n}(M_n,\varphi_n)\ovt R \simeq \ovt_{n\in \N}(M_n\ovt R,\varphi_n \otimes \tau). $$
Observe that the tensor components determine \textit{up to} tensor products with $R$. Thus it is not easy to pick up information of $M_n$ directly. To avoid this situation, in this article, we will assume that each $M_n$ is a \textit{prime factor}, meaning that for any tensor decomposition $M_n=P\ovt Q$, we have either $P$ or $Q$ is of type I. In this case, $M_n$ is not isomorphic to $M_n \ovt R$ and we may treat $M_n$ as tensor components. We mention that in the Araki--Woods factor case, all type I factors $M_n$ are prime by definition.

	Here we briefly review the study of prime factors and related results. Examples of prime factors were first discovered by Popa \cite{Po83} and then by Ge \cite{Ge96}, in which they proved that any free group factor $L\F_{n}$ ($n \geq 2$, possibly infinite) is prime. Ozawa established a completely new and much simpler proof, using C$^*$-algebraic techniques \cite{Oz03}. 
Based on Ozawa's new proof and Popa's intertwining techniques (see Section \ref{Factors in the class P}), Ozawa and Popa obtained a remarkable structural theorem for tensor product factors \cite{OP03}. They proved that, whenever we consider a tensor product of finitely many free group factors, then the resulting tensor product factor \textit{remembers} its tensor components in the following precise sense. 
Let $M_i$ and $N_j$ be some free group factors and assume that $\ovt_{i=1}^n M_i$ and $\ovt_{j=1}^mN_j$ are stably isomorphic for some $n,m\in \N$, then $n=m$ and, after permutation of indices,  $M_i$ and $N_i$ are stably isomorphic for all $i$. Here $P$ and $Q$ are \textit{stably isomorphic} if $P\ovt \B(\ell^2)$ and $Q\ovt \B(\ell^2)$ are isomorphic.

	This should be called the \textit{unique prime factorization} for free group factors. Thus the classification of such tensor product factors is completely reduced to the one of each tensor components. This is a complete answer for the aforementioned classification problem for tensor products of finitely many free group factors.

	Many new examples of type II and III factors satisfying the unique prime factorization have been discovered since then. However, all of such results treat only tensor products of finitely many tensor components. The first example of unique prime factorization for \textit{infinite} tensor product factors are given by ourselves \cite{Is16b}, but they are all type $\rm II_1$ factors. 
	The aim of this article is to investigate the unique prime factorization for infinite tensor product factors that include type III factors. It is a more challenging problem since infinite tensor product factors $\ovt_{n\in \N}(M_n,\varphi_n)$ depend on the choice $\varphi_n$ and this dependence does not appear in both the finite tensor product case and the infinite tensor product II$_1$ factor case. 

	To introduce our main theorem, we need to prepare some notation and terminology. 
We say that an inclusion $B\subset M$ of von  Neumann algebras is \textit{with expectation} if there is a faithful normal conditional expectation from $M$ onto $B$. We say that a von Neumann algebra $M$ admits a \textit{large centralizer} if there is a finite von Neumann algebra $N\subset M$ with expectation such that $N'\cap M \subset N$. 
We define the following class of factors which satisfy a practical condition. See Section \ref{Relative amenability for subalgebras} and \ref{Factors in the class P} for the symbols $\lessdot$ and $\preceq$ respectively.

\begin{Def}\upshape\label{def of class P}
We say that a factor $M$ is in the \textit{class $\mathcal{P}$} if it is a separable non-amenable factor with a large centralizer and it satisfies the following condition:
\begin{itemize}
	\item for any separable factors $B$, $P$, $Q$ such that $B\ovt M=P\ovt Q$ and that $B$ has a large centralizer, we have either $P\preceq_{B\ovt M} B$ or $Q\lessdot_{B\ovt M} B$. 
\end{itemize}
\end{Def}
The condition in this definition may seem strange, so we briefly explain it here. Recall from \cite{Is14,Is16b} that a II$_1$ factor $M$ is \textit{strongly prime} if for any  II$_1$ factors $B,P,Q$ with $B\ovt M=P\ovt Q$, we have either $P\preceq_{B\ovt M} B$ or $Q\preceq_{B\ovt M} B$. This condition is inspired from the notion of prime numbers and has several applications including unique prime factorizations. However, in the type III setting, any of such examples are not known so far, and hence it is not appropriate to use this concept to study unique prime factorizations on type III factors. 
Now we turn to see our condition on the class $\mathcal{P}$. This condition should be regarded as a weaker version of strong primeness and, as we will see, this condition is really enough to prove our unique prime factorizations.

	We will show that any factor in the class $\mathcal{P}$ is prime, and the following factors are indeed contained in the class $\mathcal{P}$. See Section \ref{Factors in the class P} for details.
	\begin{itemize}
		\item Any non-amenable factor that satisfies condition (AO)$^+$ and has the W$^*$CBAP.
		\item Any free product factor $(M_1,\varphi_1)*(M_2,\varphi_2)$, where each $M_i$ is a diffuse von Neumann algebra with a faithful normal state $\varphi_i$. 
	\end{itemize}
We mention that in the finitely many tensor components case, the unique prime factorization of condition (AO) factors are proved in \cite{OP03,Is14,HI15} and the one of free product $\rm II_1$ factors are proved in \cite{Pe06}.

	Now we introduce the main theorem of this article.
Below we say that a factor $M$ is \textit{semiprime} if for any tensor decomposition $M=P\ovt Q$, we have either $P$ or $Q$ is amenable. We use this semiprimeness as an assumption of ambiguous factors.

\begin{ThmA}\label{ThmA}
	Let $X,Y \subset \N$ be subsets and let $M_m$ and $N_n$ be separable factors for $m\in X$ and $n\in Y$. Assume that each $M_m$ is in the class $\mathcal{P}$ and each $N_n$ is non-amenable.
If there are faithful normal states $\varphi_m$ on $M_m$, $\psi_n$ on $N_n$, amenable separable factors $M_0$ and $N_0$ (which are possibly trivial) such that
	$$M:= \ovt_{m\in X}(M_m,\varphi_m) \ovt M_0 \simeq \ovt_{n\in Y}(N_n,\psi_n) \ovt N_0,$$
then there is an injective map $\sigma \colon Y \to X$ such that $M_{\sigma(n)}\preceq_M N_n$ for all $n\in Y$. 

If we further assume that all $N_n$ are semiprime, then $\sigma$ is bijective and there are projections $p_n\in M_{\sigma(n)}$, $q_n\in N_n$ and amenable factors $R_n$ such that 
	$$p_nM_{\sigma(n)}p_n \ovt R_n \simeq q_nN_nq_n \quad \text{for all }n\in Y.$$
\end{ThmA}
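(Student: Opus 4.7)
My plan is to locate each factor $N_n$ inside the $M$-side decomposition by repeated use of the class $\mathcal{P}$ property of the $M_m$, and then to upgrade the resulting intertwinings to a tensor splitting when the extra semiprimeness hypothesis is imposed. For $m\in X$ and $n\in Y$, write
\[
\widetilde{M}_m := \Bigl(\ovt_{m'\in X\setminus\{m\}}(M_{m'},\varphi_{m'})\Bigr)\ovt M_0,\qquad \widetilde{N}_n := \Bigl(\ovt_{n'\in Y\setminus\{n\}}(N_{n'},\psi_{n'})\Bigr)\ovt N_0,
\]
so that $M=M_m\ovt\widetilde{M}_m=N_n\ovt\widetilde{N}_n$. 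Each $\widetilde{M}_m$ inherits a large centralizer (from $M_0$, or from any other class-$\mathcal{P}$ factor in the product), so the defining condition of class $\mathcal{P}$ applies to $M_m$ with $B=\widetilde{M}_m$.

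To construct $\sigma$, I fix $n\in Y$. For every $m\in X$, applying the class $\mathcal{P}$ condition to the two decompositions $M=\widetilde{M}_m\ovt M_m=N_n\ovt\widetilde{N}_n$ yields the dichotomy $N_n\preceq_M\widetilde{M}_m$ or $\widetilde{N}_n\lessdot_M\widetilde{M}_m$. I expect the first alternative to fail for at least one $m$: otherwise, letting $m$ range, one can trap $N_n$ inside the common part of all $\widetilde{M}_m$, which is amenable (morally $M_0$), contradicting non-amenability of $N_n$. Hence there exists $m=:\sigma(n)$ with $\widetilde{N}_n\lessdot_M\widetilde{M}_m$. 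At this point the key technical input — the characterization of relative amenability on the continuous core — converts relative amenability on one side into intertwining on the commutant side, yielding
\[
M_m=(\widetilde{M}_m)'\cap M\preceq_M(\widetilde{N}_n)'\cap M=N_n.
\]

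For injectivity of $\sigma$: if $\sigma(n_1)=\sigma(n_2)=m$ with $n_1\ne n_2$, the two intertwinings $M_m\preceq_M N_{n_1}$ and $M_m\preceq_M N_{n_2}$ produce embeddings of a common corner of $M_m$ into the commuting subfactors $N_{n_1}$ and $N_{n_2}$; combining them forces a non-trivial corner of $M_m$ to be amenable (since $N_{n_1}\cap N_{n_2}=\C$ inside $N_{n_1}\ovt N_{n_2}$), contradicting non-amenability of $M_m$. For the tensor splitting under the additional semiprimeness hypothesis, the intertwining $M_{\sigma(n)}\preceq_M N_n$ combined with Popa's intertwining machinery produces projections $p_n\in M_{\sigma(n)}$, $q_n\in N_n$, a unitary $u_n\in M$ and a subfactor $R_n\subset q_nN_nq_n$ commuting with $u_np_nM_{\sigma(n)}p_nu_n^*$ such that $q_nN_nq_n=u_np_nM_{\sigma(n)}p_nu_n^*\ovt R_n$; semiprimeness of $N_n$ then forces $R_n$ to be amenable. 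Surjectivity of $\sigma$ follows by regrouping these tensor factorizations of $q_nN_nq_n$ inside $M$ and invoking primeness of any $M_m$ with $m\notin\sigma(Y)$ to derive a contradiction from the extra non-amenable tensor component.

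The main obstacle is the commutant-switching step in the construction of $\sigma$, namely the passage from $\widetilde{N}_n\lessdot_M\widetilde{M}_m$ to $M_m\preceq_M N_n$. In the tracial setting this duality is essentially contained in \cite{Is16b}, but in the present type III framework no global trace on $M$ is available, and one has to work on the continuous core of $M$; the required ingredient is precisely the characterization of relative amenability on cores alluded to in the abstract. A secondary bookkeeping difficulty is to verify at every application of the class $\mathcal{P}$ condition that the ``$B$-side'' genuinely has a large centralizer, which constrains how the amenable absorbers $M_0$ and $N_0$ are tracked throughout the argument.
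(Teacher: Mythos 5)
Your overall architecture (use the class $\mathcal{P}$ dichotomy against the decompositions $M=\widetilde{M}_m\ovt M_m=N_n\ovt\widetilde{N}_n$, rule out one alternative by non-amenability of $N_n$, then dualize to get $M_{\sigma(n)}\preceq_M N_n$) is the right one, but you have assigned the roles of $P$ and $Q$ the wrong way round, and this breaks both halves of your argument. With $B=\widetilde{M}_m$ the paper takes $P=\widetilde{N}_n=N_n^c$ and $Q=N_n$, so the dichotomy reads: either $N_n^c\preceq_M M_m^c$, which by the commutant lemma for intertwining (Lemma \ref{relative commutant}, applied twice using factoriality) is \emph{equivalent} to $M_m\preceq_M N_n$; or $N_n\lessdot_M M_m^c$. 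If the first alternative fails for \emph{every} $m$, then $N_n\lessdot_M M_m^c$ for all $m$, and it is this relative-amenability relation that can be intersected over finitely many $m$ (Lemma \ref{intersection lemma}) and then over all of $X$ (Lemma \ref{infinite tensor lemma finite case}) to give $N_n\lessdot_M M_0$, contradicting non-amenability of $N_n$. In your version the two alternatives are $N_n\preceq_M\widetilde{M}_m$ and $\widetilde{N}_n\lessdot_M\widetilde{M}_m$, and neither leads anywhere: there is no tool for intersecting the \emph{targets} of $\preceq$ over all $m$ (the intersection and infinite-tensor lemmas of Section \ref{Relative amenability for subalgebras} are statements about $\lessdot$ only, and the analogous statement for $\preceq$ is false in general), and, more seriously, your ``commutant-switching step'' from $\widetilde{N}_n\lessdot_M\widetilde{M}_m$ to $M_m\preceq_M N_n$ is not a valid implication. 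Relative amenability does not dualize to intertwining under taking commutants (e.g.\ every subalgebra is relatively amenable to $M$ itself, which yields no intertwining whatsoever), and the continuous-core characterization (Theorem \ref{injectivity by core for relative amenable}) does not supply such a passage: it converts relative amenability in $M$ into relative amenability in the core, and its role in the proof is precisely to establish the two lemmas about $\lessdot$ mentioned above. So the step you flag as ``the main obstacle'' is not merely hard; it is the wrong step, and the fix is to swap $P$ and $Q$ so that the intertwining alternative is the one you keep and the relative-amenability alternative is the one you contradict.

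Two smaller points. Your injectivity argument (``combining the two embeddings forces a corner of $M_m$ to be amenable since $N_{n_1}\cap N_{n_2}=\C$'') is not justified: two separate intertwinings into commuting subfactors do not combine into an intertwining into their intersection. The paper instead upgrades $M_{\sigma(n)}\preceq_M N_n$ to a genuine spatial inclusion $vM_{\sigma(n)}v^*\subset vv^*N_nvv^*$ via \cite[Lemma 4.13]{HI15}, extracts a diffuse abelian $A\subset q N_n q$ sitting inside $vM_{\sigma(n)}v^*$, and derives $A\preceq_M N_{n'}$, which contradicts Lemma \ref{uniqueness lemma} when $n\neq n'$. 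Similarly, surjectivity is obtained not by ``regrouping and primeness'' but by running the whole first-part argument a second time with the roles of the two decompositions exchanged, using the corners $v_nM_{\sigma(n)}v_n^*$ (which remain in the class $\mathcal P$ by Lemma \ref{stably isom for class P}) as the class-$\mathcal P$ tensor components on the $N$-side; this is why the paper proves its Claim under the more flexible hypothesis involving corners $p_mM_mp_mp_m'=P_m\ovt R_m$.
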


By assuming that all factors belong to the class $\mathcal P$, we obtain the following unique prime factorization result. This is the first classification result for infinite tensor product type III factors in the non-amenable setting. 
We mention that, regarding free product type III factors, it is new even for finite sets $X,Y$.

\begin{CorA}\label{CorB}
	Let $X,Y \subset \N$ and let $M_m$ and $N_n$ be factors in the class $\mathcal{P}$ for all $m\in X$ and $n\in Y$. The following statements are equivalent.
\begin{itemize}
	\item There are faithful normal states $\varphi_m$ on $M_m$ and $\psi_n$ on $N_n$ and amenable factors $M_0$ and $N_0$ with separable preduals such that $\ovt_{m\in X}(M_m,\varphi_m)\ovt M_0$ and $\ovt_{n\in Y}(N_n,\psi_n)\ovt N_0$ are stably isomorphic.
	\item There is a bijection $\sigma\colon Y \to X$ such that $M_{\sigma(n)}$ and $N_n$ are stably isomorphic for all $n \in Y$.
\end{itemize}
\end{CorA}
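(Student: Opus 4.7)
The implication (2) $\Rightarrow$ (1) is bookkeeping: given component stable isomorphisms $M_{\sigma(n)} \ovt \B(\ell^2) \simeq N_n \ovt \B(\ell^2)$, one forms the infinite tensor product with suitably chosen states and absorbs the extra Araki--Woods tensor factors into the amenable tails $M_0$ and $N_0$. The substance of the corollary lies in (1) $\Rightarrow$ (2).

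Every factor in the class $\mathcal{P}$ is prime, hence semiprime, so Theorem \ref{ThmA} applies in its stronger form and produces a bijection $\sigma\colon Y\to X$ together with projections $p_n\in M_{\sigma(n)}$, $q_n\in N_n$ and amenable factors $R_n$ satisfying
\[ p_n M_{\sigma(n)} p_n \ovt R_n \simeq q_n N_n q_n. \]
My plan is to upgrade this to the stable isomorphism $M_{\sigma(n)} \ovt \B(\ell^2) \simeq N_n \ovt \B(\ell^2)$ by showing that each $R_n$ must be of type $\mathrm{I}$. Granting this, $R_n \simeq \B(H_n)$ for a separable Hilbert space $H_n$, and the chain
\begin{align*}
M_{\sigma(n)}\ovt\B(\ell^2)
&\simeq p_n M_{\sigma(n)} p_n \ovt \B(\ell^2)\\
&\simeq p_n M_{\sigma(n)} p_n \ovt R_n \ovt \B(\ell^2)\\
&\simeq q_n N_n q_n \ovt \B(\ell^2) \simeq N_n \ovt \B(\ell^2)
\end{align*}
closes the argument, using that any non-zero corner of a factor is stably isomorphic to the factor itself and that $\B(H_n) \ovt \B(\ell^2) \simeq \B(\ell^2)$.

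To prove that $R_n$ is of type $\mathrm{I}$, I argue by contradiction. If $R_n$ were a non-type-$\mathrm{I}$ amenable factor, then $R_n$ would be McDuff, and consequently so would $p_n M_{\sigma(n)} p_n \ovt R_n \simeq q_n N_n q_n$. Amplifying by $\B(\ell^2)$ and using that corners of a factor are stably isomorphic to it, this forces $N_n \ovt \B(\ell^2)$ to be McDuff. A short case analysis on the type of $N_n$ shows that this implies $N_n$ itself is McDuff. But $N_n \in \mathcal{P}$ is prime and non-type-$\mathrm{I}$, which rules out McDuffness: an isomorphism $N_n \simeq N_n \ovt R$ would violate primeness since neither $N_n$ nor the hyperfinite $\mathrm{II}_1$ factor $R$ is of type $\mathrm{I}$. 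This contradiction forces $R_n$ to be of type $\mathrm{I}$. The only place where genuine care is needed is this McDuff descent from $N_n \ovt \B(\ell^2)$ to $N_n$; everything else is routine amplification.
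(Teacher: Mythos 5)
Your direction (1)~$\Rightarrow$~(2) is correct and follows the same skeleton as the paper: primeness implies semiprimeness, so Theorem~\ref{ThmA} yields the bijection $\sigma$ and the isomorphisms $p_nM_{\sigma(n)}p_n\ovt R_n\simeq q_nN_nq_n$, and the whole point is to see that $R_n$ is type I. Where you differ is in how you see it: you go through McDuffness, which forces you to invoke that every non-type-I injective factor with separable predual absorbs $R$ tensorially (for type $\rm III_0$ this rests on the Krieger--Connes classification by the flow of weights) and then to descend McDuffness from $N_n\ovt\B(\ell^2)$ to $N_n$ by cutting corners. The paper's route is shorter and uses less: since primeness passes to corners of a factor, the displayed identity is a tensor decomposition of the prime factor $q_nN_nq_n$ into $p_nM_{\sigma(n)}p_n$ (not type I, being a corner of a non-amenable factor) and $R_n$, so $R_n$ is type I outright. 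Note that your McDuff descent needs essentially the same corner-cutting argument anyway, so the detour buys nothing.

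The real issue is (2)~$\Rightarrow$~(1), which you dismiss as bookkeeping; it is not, and the paper spends half of its proof on it. Stable isomorphism of the components only gives, in the case where $M_n$ and $N_n$ are both finite, an isomorphism $M_n\ovt\M_k(\C)\simeq q_nN_nq_n\ovt\M_l(\C)$ for some nonzero projection $q_n\in N_n$. The matrix tails assemble into an Araki--Woods factor and can indeed be absorbed into $M_0$, $N_0$, but the infinite tensor product $\ovt_{n}(q_nN_nq_n,\psi_n)$ must then be identified with a \emph{corner} $q\left(\ovt_n(N_n,\tau_n)\right)q$ with $q=q_1\otimes q_2\otimes\cdots$, and this projection is nonzero only if $\prod_n\tau_n(q_n)>0$. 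Without choosing the amplification parameters $k,l$ at each stage so that $\tau_n(q_n)$ tends to $1$ fast enough, the element $q$ is $0$ and the identification fails. This is precisely the point at which the infiniteness of the tensor product interacts with the componentwise stable isomorphisms, and it is the one step of this direction that requires an actual argument; your proposal does not address it.
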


The organization of this paper is as follows. 
In Section \ref{Preliminaries}, we recall some known facts on infinite tensor product factors and large centralizer conditions. 

In Section \ref{Relative amenability for subalgebras} (and Section \ref{Relative amenability for bimodules}), we define and study relative amenability for general von Neumann algebras as a generalization of \cite{AD93}. The main observation here is a characterization of relative amenability in terms of continuous cores (Theorem \ref{injectivity by core for relative amenable}). Using this, we prove two lemmas (Lemma \ref{intersection lemma} and \ref{infinite tensor lemma finite case}) for tensor product factors which are key ingredients of the proof of our main theorem.

In Section \ref{Factors in the class P}, we provide several examples of type II and III factors which are in the class $\mathcal{P}$. They are proved by combinations of known techniques which are first established in \cite{Oz03,IPP05}. We will use variants of them introduced in \cite{Is12a,Is16b,Io12,HU15b}.

In Section \ref{Proof of Theorem A}, we prove the main theorem. Using the condition of the class $\mathcal{P}$ and lemmas in Section \ref{Relative amenability for subalgebras}, we essentially reduce our problem to tensor product factors with finitely many tensor components. Then using techniques developed in \cite{Is14,HI15} for type III factors, we will finish the proof.

\bigskip

\noindent 
{\bf Acknowledgement.} The author would like to thank Y. Arano and R. Tomatsu for insightful discussions on infinite tensor product factors. 
He also thank the anonymous referee for careful reading of our manuscript and many insightful comments and suggestions. 
He was supported by JSPS, Research Fellow of the Japan Society for the Promotion of Science.

\tableofcontents

\bigskip

\subsection*{Notation}
Throughout the paper, we will use the following notation. Let $M$ be a von Neumann algebra and $\varphi$ a faithful normal semifinite weight on $M$. The \textit{modular operator, conjugation}, and \textit{action} are denoted by $\Delta_\varphi$, $J_\varphi$, and $\sigma^\varphi$ respectively. The \textit{continuous core} is the crossed product von Neumann algebra $M\rtimes_{\sigma^\varphi}\R$ and is denoted by $C_\varphi(M)$. The \textit{centralizer algebra} $M_\varphi$ is a fixed point algebra of the modular action. The norm $\|\, \cdot \, \|_\infty$ is the operator norm of $M$, while $\|\, \cdot \, \|_{2,\varphi}$ is the $L^2$-norm by $\varphi$. The GNS representation of $\varphi$ is denoted by $L^2(M,\varphi)$ and sometimes we omit $\varphi$ regarding as a \textit{standard representation}. See \cite{Ta01} for definitions of these objects.

For a tensor product von Neumann algebra $M\ovt N$, we always regard $M$ and $N$ as subalgebras in $M\ovt N$ via identifications $M=M\ovt \C \subset M\ovt N$, $N=\C\ovt N \subset M\ovt N$. For a von Neumann subalgebra $A\subset 1_AM1_A$ with unit $1_A$, we will write as $A\ovt N$ the von Neumann subalgebra (with unit $1_A \otimes 1_N$) generated by $a\otimes x$ for $a\in A$ and $x\in N$.

\section{Preliminaries}\label{Preliminaries}

	In this section, we recall basic properties of infinite tensor product factors. We particularly focus on the large centralizer condition of them. All results in this section should be known to experts but we could not find them in the literature. So we include all proofs for the reader's convenience. 

	We refer the reader to \cite{Co72} and \cite[Chapter XII]{Ta01} for definitions and basic facts of type $\rm III_\lambda$ factors for $0\leq \lambda \leq 1$.

\begin{Lem}\label{bicentralizer lemma}
	The following statements hold true.
\begin{itemize}
	\item[$\rm (1)$] Any semifinite factor and any type $\rm III_\lambda$ factor for $0\leq \lambda<1$ admit large centralizers.
	\item[$\rm (2)$] For any type $\rm III_1$ factor $M$ with separable predual, $M$ admits a large centralizer if and only if there is a faithful normal state $\varphi$ on $M$ such that $M_\varphi'\cap M=\C$.
\end{itemize}
\end{Lem}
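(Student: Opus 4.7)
The plan is to construct, for each of the listed factor types, an explicit finite subalgebra $N\subset M$ with expectation such that $N'\cap M\subset N$, and conversely to extract such an $N$ from a faithful state with trivial centralizer commutant.

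For (1), in the semifinite case I would distinguish finite and infinite parts. If $M$ is finite (type $\mathrm{I}_n$ or $\mathrm{II}_1$) take $N=M$, so that $N'\cap M=Z(M)=\C\subset N$. If $M$ is of type $\mathrm{I}_\infty$ or $\mathrm{II}_\infty$, decompose $M=P\ovt\B(\ell^2)$ with $P$ a finite factor (possibly $\C$), and set $N:=P\ovt D$ where $D\subset\B(\ell^2)$ is the diagonal MASA. Then $N$ is a finite von Neumann algebra (every isometry in a direct sum of finite factors is unitary), it admits the product conditional expectation $\id_P\otimes E_D$, and by the tensor commutation theorem $N'\cap M=\C\ovt D=D\subset N$. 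For type $\mathrm{III}_\lambda$ with $0\le\lambda<1$, I would invoke Connes's classical construction (via the discrete decomposition for $0<\lambda<1$ and a dominant weight for $\lambda=0$) producing a faithful normal state $\varphi$ on $M$ with $M_\varphi'\cap M=\C$; then $N:=M_\varphi$ is finite because $\varphi|_{M_\varphi}$ is a faithful normal tracial state, the expectation exists by Takesaki's theorem, and trivially $N'\cap M=\C\subset N$.

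For (2), the direction $(\Leftarrow)$ is handled exactly as in the previous paragraph by setting $N:=M_\varphi$. For $(\Rightarrow)$, given a large centralizer $N\subset M$ with expectation $E$, I would first note that $N'\cap M\subset N$ together with the trivial inclusion $Z(N)\subset N'\cap M$ forces $N'\cap M=Z(N)$. Pick any faithful normal tracial state $\tau$ on $N$ and set $\varphi_0:=\tau\circ E$. Since $\sigma^{\varphi_0}|_N=\id$, we have $N\subset M_{\varphi_0}$, and consequently
$$M_{\varphi_0}'\cap M\subset N'\cap M=Z(N)\subset N\subset M_{\varphi_0},$$
so $M_{\varphi_0}'\cap M=Z(M_{\varphi_0})=Z(N)$. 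In particular, if $N$ is a finite factor we are done.

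The remaining task, which I expect to be the hard part, is to reduce the problem to the case where $N$ is a factor. When $Z(N)$ admits a minimal projection $p$ this reduction is immediate: $Np\subset pMp$ is a finite factor with $(Np)'\cap pMp=pZ(N)p=\C p$, and because $M$ is of type $\mathrm{III}$ the corner $pMp$ is isomorphic to $M$, transferring back a finite factor large centralizer. The delicate case is when $Z(N)$ is diffuse, where the principal tool I would reach for is the continuous core $C_{\varphi_0}(M)=M\rtimes_{\sigma^{\varphi_0}}\R$: since $M$ is of type $\mathrm{III}_1$ the core is a $\mathrm{II}_\infty$ factor, and the triviality of $\sigma^{\varphi_0}|_N$ implies that $N$ and the canonical unitaries $\lambda_s$ ($s\in\R$) commute inside $C_{\varphi_0}(M)$, with joint relative commutant generated by $Z(N)$ and these unitaries. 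Playing this structural information against the factoriality of the core via a Dixmier-type averaging argument over $Z(N)$ should allow one to refine $N$ to a finite factor large centralizer and conclude.
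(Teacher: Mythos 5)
Your treatment of the semifinite case and of the converse direction of (2) is fine (and for the semifinite infinite case it matches the paper's observation that an atomic masa of the type I tensor factor does the job). But there are two genuine gaps. First, in part (1) your argument for type $\rm III_0$ rests on a false statement: a type $\rm III_0$ factor admits \emph{no} faithful normal state $\varphi$ with $M_\varphi'\cap M=\C$. By Connes's theorem, such a state would force $S(M)=\mathrm{Sp}(\Delta_\varphi)$, and since $S(M)\setminus\{0\}$ is then a closed subgroup of $\R_+^\times$ equal to $\{1\}$ in the $\rm III_0$ case, one would get $\Delta_\varphi=1$, i.e.\ $\varphi$ tracial --- a contradiction. (A dominant weight is even worse: its centralizer has center isomorphic to the flow of weights, which is diffuse for $\rm III_0$.) What is true, and what the paper uses, is that a $\rm III_0$ factor has a maximal abelian subalgebra with expectation (Connes, \cite[TH\'EOR$\rm \grave{E}$M 5.2.1(a)]{Co72}), or alternatively a lacunary state $\psi$ with $M_\psi$ of type $\rm II_1$ and $M_\psi'\cap M\subset M_\psi$; either gives a large centralizer without ever producing a trivial relative commutant.

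Second, in the forward direction of (2) the case where $Z(N)$ is diffuse --- which you correctly identify as the hard part --- is not proved: the proposed ``Dixmier-type averaging over $Z(N)$ in the continuous core'' is a heuristic, and it is very unlikely to close the gap, because for type $\rm III_1$ factors the passage from a large centralizer to a state with $M_\varphi'\cap M=\C$ is genuinely deep. The paper's route is: a large centralizer yields a maximal abelian subalgebra with expectation by Popa's theorem \cite[Theorem 3.3]{Po81}, and then \cite[Corollary 3.6]{HI15} (which rests on Haagerup's solution of the bicentralizer problem for suitable $\rm III_1$ factors \cite{Ha85}) upgrades this to the existence of a faithful normal state with trivial centralizer relative commutant. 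No elementary averaging argument is known to replace this chain, so as written your proof of (2)($\Rightarrow$) is incomplete precisely in the only nontrivial case.
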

\begin{proof}
	For the first statement, the finite factor case is trivial. For the semifinite and infinite case, we have only to observe that $\B(\ell^2(I))$ for any set $I$ admits an atomic masa $\ell^\infty(I)$ that is with expectation. For the type III case, Connes proved that any type $\rm III_\lambda$ factor for some $0\leq \lambda<1$ has a maximal abelian subalgebra with expectation \cite[TH\'EOR$\rm \grave{E}$M 4.2.1(a) and 5.2.1(a)]{Co72}. 

	For the second statement, if $M$ admits a large centralizer, then it has a maximal abelian subalgebra $A\subset M$ with expectation \cite[Theorem 3.3]{Po81}. Then the conclusion holds by \cite[Corollary 3.6]{HI15}.
\end{proof}

\begin{Lem}\label{states on factors}
	Let $M$ be a $\sigma$-finite factor of not type $\rm III_1$ and $\varphi$ a faithful normal state on $M$. Then for any $\varepsilon>0$, there exist a matrix unit $\{e_{i,j}\}_{i,j=1}^n$ in $M$ (possibly $n=\infty$) with the decomposition $M = eMe\ovt \B(\ell^2_n)$, where $e:=e_{1,1}$, and faithful normal states $\psi$ on $eMe$ and $\omega$ on $\B(\ell^2_n)$ such that $\|\varphi - \psi\otimes \omega\|<\varepsilon$, where $n$ and $\psi$ are taken as:
\begin{itemize}
	\item if $M$ is a type $\rm II_1$ factor, then $n<\infty$ and $\psi$ is the trace on $eMe$;

	\item if $M$ is a type $\rm II_\infty$ factor, then $n=\infty$, $e$ is a finite projection, and $\psi$ is the trace on $eMe$;

	\item if $M$ is a type $\rm III_{\lambda}$ factor for some $0<\lambda <1$, then $n<\infty$ and $\psi$ is a generalized trace in the sense that  $(eMe)'_{\psi}\cap eMe =\C$;

	\item if $M$ is a type $\rm III_0$ factor, then $n=1$, $(eMe)_{\psi}$ is of type $\rm II_1$ and $(eMe)_{\psi}' \cap eMe \subset (eMe)_{\psi}$.
\end{itemize}	
\end{Lem}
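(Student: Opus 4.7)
The plan is to treat the four cases separately, since the structure theory depends critically on the type of $M$. In every case the strategy is to approximate $\varphi$ in norm by a faithful normal state whose Radon--Nikodym density against a carefully chosen reference has simple step-function form compatible with a matrix unit in $M$; the matrix unit then gives the required tensor decomposition $M=eMe\ovt \B(\ell^2_n)$.

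For the type II$_1$ case, write $\varphi(\cdot)=\tau(h\cdot)$ with $\tau$ the trace. Approximate $h$ in $L^1(M,\tau)$ by a step function $\sum_{i=1}^n \lambda_i p_i$ with mutually orthogonal projections $p_i$ of equal trace $1/n$, using that the trace on a II$_1$ factor takes every value in $[0,1]$. Completing $\{p_i\}$ to a matrix unit $\{e_{i,j}\}$ with $e_{i,i}=p_i$ yields $M=eMe\ovt \B(\ell^2_n)$ with $e=e_{1,1}$, under which the approximating density lies in $1\otimes M_n(\C)$, so the corresponding state factors as $\tau|_{eMe}\otimes \omega$; $L^1$-closeness of the densities translates to norm closeness of states. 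The II$_\infty$ case is parallel, working with a semifinite trace, finitely many equivalent finite projections of equal trace, and extending to an infinite matrix unit with $e_{1,1}$ finite.

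For the III$_\lambda$ case with $0<\lambda<1$, invoke Connes' theorem that such factors admit a faithful normal generalized trace $\psi_0$ whose centralizer $M_{\psi_0}$ is a II$_1$ factor with $M_{\psi_0}'\cap M=\C$. The plan is to choose a matrix unit $\{e_{i,j}\}\subset M_{\psi_0}$ giving $M=eMe\ovt \B(\ell^2_n)$ and simultaneously $M_{\psi_0}=eM_{\psi_0}e\ovt M_n(\C)$, then approximate $\varphi$ by a product state $\psi\otimes\omega$ with $\psi:=\psi_0|_{eMe}$; the centralizer satisfies $(eMe)_\psi=eM_{\psi_0}e$ because $e\in M_{\psi_0}$, and $(eMe)_\psi'\cap eMe=\C$ follows from $M_{\psi_0}'\cap M=\C$ via the tensor identification $M_{\psi_0}=eM_{\psi_0}e\otimes M_n(\C)$ inside $eMe\otimes M_n(\C)=M$. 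The delicate part is the norm-approximation itself, which requires analyzing the $\psi_0$-density of $\varphi$ and using that the spectrum of $\Delta_{\psi_0}$ is discrete (contained in $\lambda^{\Z}$) to reduce the density to a useable form in the II$_1$ centralizer, where the II$_1$-case argument can then be applied.

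The III$_0$ case has $n=1$ and requires no tensor splitting, but one must perturb $\varphi$ in norm to a state $\psi$ with $M_\psi$ of type II$_1$ satisfying $M_\psi'\cap M\subset M_\psi$. Existence of \emph{some} such $\psi$ is a consequence of the Connes--Takesaki flow of weights: the dominant weight on $M$ has type II$_\infty$ centralizer, and a finite cutoff yields a state with II$_1$-factor centralizer having the required large-centralizer property. The main obstacle is obtaining such $\psi$ \emph{norm-close} to the prescribed $\varphi$; I expect this to follow from a norm-density statement for these good states in the normal state space, via inner perturbations within the dominant weight framework combined with Connes' cocycle theorem. The norm-approximation substep in the III$_\lambda$ case and the density substep in the III$_0$ case are, in my estimation, the two places where the argument is genuinely nontrivial; the II$_1$ and II$_\infty$ cases are then essentially spectral approximation.
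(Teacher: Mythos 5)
Your type II$_1$ and II$_{\infty}$ cases are essentially the paper's argument (write the density of $\varphi$ against the trace, approximate it in $L^1$ by a step function over mutually equivalent projections, complete to a matrix unit), with one small imprecision in the II$_{\infty}$ case: a step function supported on \emph{finitely many} finite projections leaves an infinite remainder on which the product state would fail to be faithful, so one must use an infinite family $(e_i)_{i=1}^{\infty}$ of equivalent finite projections summing to $1$ with strictly positive weights $\mu_i$ on all of them, as the paper does. That is patchable.

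The genuine gap is in the two type III cases, and it is the same missing idea in both: you never establish that the density of $\varphi$ with respect to your reference state lies in the \emph{centralizer} of that reference state, which is exactly what makes the spectral approximation by equivalent projections possible inside a type II$_1$ algebra. In the III$_{\lambda}$ case you start from an arbitrary generalized trace $\psi_0$ and propose to "analyze the $\psi_0$-density of $\varphi$"; but for a generic $\psi_0$ the Connes cocycle $[D\varphi:D\psi_0]_t$ is not of the form $h^{it}$ with $h$ affiliated to $M_{\psi_0}$, and there is no reason $\varphi$ is even norm-close to any product state built from $\psi_0$. The resolution, which is the actual content of the paper's proof, is to produce $\psi$ \emph{from} $\varphi$: since $T=2\pi/\log\lambda$ lies in the $T$-set, \cite[TH\'EOR\`EME~1.3.2]{Co72} gives a positive invertible $h\in\mathcal{Z}(M_\varphi)$ with $\psi:=\varphi(h\,\cdot\,)$ satisfying $\sigma^\psi_T=\id$; then $h\in M_\varphi$ forces $\sigma^\psi_t(h^{-1})=h^{-1}$, so $h^{-1}\in L^1(M_\psi,\psi)$ and the II$_1$-type approximation takes place inside the II$_1$ factor $M_\psi$. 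In the III$_0$ case your route through the dominant weight has two problems: a III$_0$ factor's relevant centralizers have diffuse center coming from the flow of weights, so the claim of a "II$_1$-\emph{factor} centralizer" is too strong (and unnecessary — the lemma only asks for type II$_1$); and the norm-density of "good" states near the prescribed $\varphi$ is precisely the point at issue, which you assert rather than prove. The paper instead applies \cite[LEMME~5.2.4]{Co72} locally to get projections $e_i\in M_\varphi$ and $h_i\in e_iM_\varphi e_i$ with $\varphi(h_i\,\cdot\,)$ lacunary on $e_iMe_i$, patches them by Zorn's lemma into $\psi=\varphi(k\,\cdot\,)$ with $M_\psi$ of type II$_1$, and then approximates $k^{-1}\in L^1(M_\psi,\psi)$ by a step function over projections \emph{in} $M_\psi$ — which is also what guarantees that the perturbed state $\psi'$ still has type II$_1$ centralizer, a verification absent from your sketch.
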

\begin{proof}
	We first study the type $\rm III_\lambda$ case. Since the T-set of $M$ is $2\pi\Z / \mathrm{log}(\lambda)$, by \cite[TH\'EOR$\rm \grave{E}$M 1.3.2]{Co72} there is a faithful normal state $\psi$ and a positive invertible operator $h \in \mathcal{Z}(M_\varphi)$ such that $\psi=\varphi(h \, \cdot \, \, )$ and $\sigma_{T}^\psi = \id$, where $T:=2\pi/\mathrm{log}(\lambda)$. By \cite[TH\'EOR$\rm \grave{E}$M 4.2.6]{Co72}, it holds that $M_\psi'\cap M = \C$ and so $M_\psi$ is a type II$_1$ factor. 
Observe $h^{-1}\in L^1(M_\psi,\psi)$ since $\psi(h^{-1}) = \varphi(1)=1 <\infty$. We can find a family of mutually equivalent and orthogonal projections $(e_i)_{i=1}^n$ from spectral projections of $h^{-1}$ such that $\|h^{-1} - \sum_{i=1}^n \mu_i e_i \|_{1,\psi} < \varepsilon$ for some $\mu_i >0$ (possibly $\mu_i=\mu_j$). Observe that 
\begin{align*}
	 \left\|\sum_{i=1}^n \psi(\mu_i e_i \, \cdot \,) - \varphi \right\| 
	= & \left\|\psi\left(\sum_{i=1}^n \mu_i e_i \, \cdot \,\right) - \psi(h^{-1}\, \cdot \,) \right\|  < \varepsilon.
\end{align*}
Let $\{e_{i,j}\}_{i,j=1}^n$ be a matrix unit in $M_\psi$ such that $e_{i,i}=e_i$ for all $i$. Then putting $\omega$ as the vector functional by $\sum_{i=1}^n \mu_i^{1/2} e_i$ (which is well defined by $\|\sum_i \mu_i^{1/2}e_i\|_{2,\psi} = \psi(\sum_i \mu_i e_i)\sim \varphi(1)=1$), one has
	$$ (\psi|_{eMe} \otimes \omega )(x) = \sum_{i=1}^n \mu_i\psi(e_{1,i}xe_{i,1}) =  \sum_{i=1}^n \mu_i\psi(e_{i,1}e_{1,i}x)=\sum_{i=1}^n \mu_i\psi(e_{i}x)  $$
for all $x\in M$ and therefore
\begin{align*}
	\|\psi|_{eMe} \otimes \omega - \varphi\| 
	=& \left\| \sum_{i=1}^n\psi(\mu_ie_i \, \cdot \, ) - \varphi\right\|  
	<  \varepsilon.
\end{align*}
Finally since $|(\psi|_{eMe}\otimes \omega)(1)-1|<\varepsilon$, up to normalizing $\psi|_{eMe}\otimes \omega$ and up to replacing $\varepsilon$ small, we obtain the desired matrix unit and states.

	The type II$_1$ factor case follows from the same argument as in the type III$_\lambda$ case, since any normal state $\varphi$ is a perturbation of the unique trace.  For the type II$_\infty$ case, we first put $\psi$ as a fixed II$_\infty$ trace, and take $h^{-1}\in L^1(M,\psi)$ as in the type III$_\lambda$ case above. One can then find a family of \textit{finite} projections $(e_i)_{i=1}^n$ which approximate $h^{-1}$ as above (here $n$ must be infinite). Then the same computations work and we get $\|\psi|_{eMe}\otimes \omega - \varphi \|<\varepsilon$. Up to normalizing, we get the conclusion.

	Finally we study the type $\rm III_0$ factor case. By \cite[TH\'EOR$\rm \grave{E}$M 5.2.1(a)]{Co72}, any faithful normal state $\psi$ on $M$ satisfies $M_{\psi}'\cap M \subset M_{\psi}$. So we only study the property that $M_\psi$ is of type $\rm II_1$. 

	By \cite[LEMME 5.2.4]{Co72}, there is a projection $e\in M_\varphi$ and an invertible positive element $h \in eM_\varphi e$ such that  $\psi_e:=\varphi(h \, \cdot \, )$ is \textit{lacunary} on $eMe$, that is, 1 is isolated in the spectrum of $\Delta_\psi$. In this case $(eMe)_{\psi_e}$ is of type $\rm II_1$ (indeed, Connes proved that $(eMe\ovt \B(\ell^2))_{\psi_e\otimes \Tr}$ is of type $\rm II_\infty$, see the last part of the proof of \cite[TH\'EOR$\rm \grave{E}$M 5.3.1]{Co72}). By replacing $h$ if necessary, we may assume that $\psi_e(e)=\varphi(e)$. 
Using Zorn's lemma, take mutually orthogonal projections $(e_i)_i$ and self-adjoint elements $h_i \in e_i M_\varphi e_i$ for each $i$ such that $\sum_{i}e_i = 1$ and each $e_i$ and $h_i$ are as above. 
Define $k:= \sum_i h_i$ as an unbounded operator affiliated with $M_\varphi$ and a faithful normal state $\psi$ on $M$ given by 
	$$\psi(x):= \varphi(k x) = \sum_i \varphi(h_i x) = \sum_i \psi_{e_i}(x), \quad x\in M^+.$$
Observe that $M_\psi$ is of type $\rm II_1$ since it contains $\sum_i (e_i M e_i)_{\psi_{e_i}}$ as a unital subalgebra. 
Since $\psi(k^{-1})=\varphi(1)=1<\infty$, $k^{-1}$ is contained in $L^1(M_\psi,\psi)$ and therefore there is a family of mutually orthogonal projections $(f_i)_{i=1}^n$ in $M_\psi$ (possibly $n=\infty$) such that $\|k^{-1} - \sum_{i=1}^n \mu_i f_i \|_{1,\psi} < \varepsilon$ for some $\mu_i >0$, so that 
	$$ \left\|\varphi - \psi\left(\sum_{i=1}^n \mu_i f_i \, \cdot \, \right)\right\|=\left\| \psi(k^{-1} \, \cdot \, ) - \psi \left(\sum_{i=1}^n \mu_i f_i \, \cdot \, \right)\right\| < \varepsilon. $$
Observe that $\psi':=\psi(\sum_i \mu_i f_i \, \cdot \, )$ is bounded since $|\psi'(1)- \varphi(1)|<\varepsilon$, and that $M_{\psi'}$ is of type $\rm II_1$ since it contains $\sum_i f_i M_\psi f_i$. Thus up to normalizing, $\psi'$ is the desired state.
\end{proof}

\subsection*{Large centralizer conditions for infinite tensor product factors}

	Until the end of this section, we fix $\sigma$-finite von Neumann algebras $M_n$ with faithful normal states $\varphi_n$ for $n\in \N$. Assume that $M_n\neq \C$ for all $n\in \N$.

\begin{Lem}\label{states of infinite tensor}
	Let $\psi_n$ be a faithful normal state on $M_n$ for all $n\in \N$. 
If $\sum_{n\in \N} \|\varphi_n-\psi_n\| < \infty$, then there is a $\ast$-isomorphism between $\ovt_{n\in \N}(M_n,\varphi_n)$ and $\ovt_{n\in \N}(M_n,\psi_n)$ which is the identity on $M_n$ for all $n\in \N$.
\end{Lem}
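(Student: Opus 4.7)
The plan is to construct a unitary $U \colon H_\varphi \to H_\psi$ between the standard Hilbert spaces of $M := \ovt_{n\in\N}(M_n,\varphi_n)$ and $N := \ovt_{n\in\N}(M_n,\psi_n)$ that intertwines the canonical action of each tensor factor $M_n$. Since $M$ and $N$ are by construction generated as von Neumann algebras by the natural images of the $M_n$ in $\B(H_\varphi)$ and $\B(H_\psi)$, conjugation by any such $U$ automatically implements the desired $\ast$-isomorphism $M \to N$ restricting to the identity on every $M_n$.

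To build $U$, I would put each $M_n$ in standard form on $L^2(M_n)$, with positive-cone representatives $\xi_n$ of $\varphi_n$ and $\eta_n$ of $\psi_n$. The Powers--Størmer inequality gives $\|\xi_n - \eta_n\|^2 \leq \|\varphi_n - \psi_n\|$; combined with the identity $\|\xi_n - \eta_n\|^2 = 2(1 - \langle\xi_n,\eta_n\rangle)$, which holds for unit vectors in the positive cone because $\langle\xi_n,\eta_n\rangle \in \R_{\geq 0}$, the summability hypothesis translates into
\[
\sum_{n\in\N}\bigl(1 - \langle\xi_n,\eta_n\rangle\bigr) < \infty.
\]
In particular $\langle\xi_n,\eta_n\rangle \to 1$, so after discarding the (finitely many) indices with $\langle\xi_n,\eta_n\rangle = 0$ and absorbing them into a single finite tensor component, which affects neither the ambient algebra nor the subalgebras $M_n$, I may assume $\prod_n\langle\xi_n,\eta_n\rangle$ converges to a strictly positive number. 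In von Neumann's language this makes the reference sequences $(\xi_n)$ and $(\eta_n)$ equivalent, so $\Omega_\psi := \bigotimes_n \eta_n$ is a well-defined unit vector inside $H_\varphi = \bigotimes_{(\xi_n)} L^2(M_n)$, lying in the positive cone since each factor does.

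Viewed as an element of $L^2(M)$, the vector $\Omega_\psi$ is then cyclic and separating for $M$. A direct computation on the $\sigma$-weakly dense $\ast$-subalgebra $\mathcal{A} := \bigcup_N (M_1 \ota \cdots \ota M_N)$ of $M$ shows that the induced vector state $\langle\,\cdot\,\Omega_\psi,\Omega_\psi\rangle$ agrees with $\bigotimes_n \psi_n$ on $\mathcal{A}$. The analogous computation on the $N$-side with the corresponding vector in $H_\psi$ yields the same state on $\mathcal{A}$, so GNS uniqueness produces a unique unitary $U \colon H_\varphi \to H_\psi$ sending one realization of $\Omega_\psi$ to the other and satisfying $U a U^* = a$ for every $a \in \mathcal{A}$; in particular $U x U^* = x$ for every $x \in M_n$ and every $n$. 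Taking the von Neumann algebras generated by these $M_n$ on both sides finishes the construction.

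The delicate step is the well-definedness and cyclicity of the "foreign" vector $\Omega_\psi$ inside $H_\varphi$: this rests on the classical equivalence of incomplete infinite tensor products along equivalent reference sequences, which is precisely the input secured by the Powers--Størmer estimate above. Once this point is settled, the rest reduces to a routine GNS uniqueness argument.
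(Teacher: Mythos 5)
Your argument is correct, but it takes a genuinely different route from the paper's. The paper works entirely in the predual: it forms the interpolating normal states $\omega_n := \psi_1 \otimes \cdots \otimes \psi_n \otimes \varphi_{n+1} \otimes \cdots$ on $\ovt_{n}(M_n,\varphi_n)$, shows via the telescoping estimate $\|\omega_n - \omega_m\| \leq \sum_{k=n+1}^{m}\|\varphi_k - \psi_k\|$ that they form a Cauchy sequence, and concludes that the product state $\otimes_n \psi_n$ is normal on $\ovt_{n}(M_n,\varphi_n)$; the GNS construction then gives the $\ast$-homomorphism, and symmetry gives its inverse. You instead work spatially: Powers--St{\o}rmer converts the summability of $\sum_n\|\varphi_n-\psi_n\|$ into $\sum_n(1-\langle \xi_n,\eta_n\rangle)<\infty$, von Neumann's equivalence criterion for incomplete infinite tensor products then realizes the product vector $\otimes_n \eta_n$ inside $H_\varphi$, and GNS uniqueness for the two cyclic representations of the algebraic tensor product produces a unitary implementing the isomorphism. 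Both proofs reduce to the same key fact (normality of $\otimes_n\psi_n$ on the $\varphi$-based product), but the paper's route is more elementary, needing only the predual norm estimate, while yours invokes the standard-form/natural-cone machinery and the classical ITPFI equivalence theory; in exchange you get an explicit spatial implementation for free. Two minor remarks: the cyclicity (not separation or positivity) of $\Omega_\psi$ is all you actually use downstream; and the step where you ``discard the indices with $\langle\xi_n,\eta_n\rangle=0$'' is vacuous but also would not be repaired by grouping (the grouped cone representatives are $\xi_1\otimes\cdots\otimes\xi_k$ and $\eta_1\otimes\cdots\otimes\eta_k$, whose inner product is the product of the individual ones) --- the correct observation is that $\langle\xi_n,\eta_n\rangle>0$ automatically, since $\xi_n$ is cyclic and separating and hence has strictly positive pairing with every nonzero element of the self-dual cone.
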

\begin{proof}
Let $M_0$ be the algebraic tensor product of $\{ M_n \}_n$ which is a dense $\ast$-subalgebra in both of $\ovt_{n\in \N}(M_n,\varphi_n)$ and $\ovt_{n\in \N}(M_n,\psi_n)$. Tensor product states $\varphi:=\otimes_n \varphi_n$ and $\psi:=\otimes_n\psi_n$ are well defined on $M_0$. We show that $\psi$ is well defined on $\ovt_{n\in \N}(M_n,\varphi_n)$. 

	To see this, consider faithful normal states $\omega_n$ on $\ovt_{n\in \N}(M_n,\varphi_n)$ for $n\in \N$ given by
	$$\omega_n := \psi_1 \otimes \cdots \otimes \psi_n \otimes \varphi_{n+1}\otimes \varphi_{n+2}\otimes \cdots .$$
Observe that for $n< m$,
\begin{align*}
	\|\omega_n - \omega_m\|
	\leq & \|\left( \varphi_{n+1}\otimes \cdots \otimes \varphi_{m}  \right) - \left( \psi_{n+1}\otimes \cdots \otimes \psi_{m} \right)\| \\
	\leq & \sum_{k=n+1}^{m}\| \varphi_k-\psi_k \| .
\end{align*}
So $(\omega_n)_n$ is a Cauchy sequence in the predual of $\ovt_{n\in \N}(M_n,\varphi_n)$ and converges to a normal state $\omega$. By construction, this coincides with $\psi$ on $M_0$. This means $\psi$ is well defined on $\ovt_{n\in \N}(M_n,\varphi_n)$.

	Finally applying the GNS construction for $\psi$, we have a $\ast$-homomorphism 
	$$\pi\colon \ovt_{n\in \N}(M_n,\varphi_n)\to \ovt_{n\in \N}(M_n,\psi_n)$$
which is the identity on $M_0$. By exchanging the roles of $\varphi$ and $\psi$, we get an inverse map of $\pi$ and therefore $\pi$ is a desired $\ast$-isomorphism.
\end{proof}

	The following proposition clarifies relations between infinite tensor product factors with given states $(\varphi_n)_n$ and the one with canonical states. As the proposition says, we can always choose canonical states as $(\varphi_n)_n$, \textit{up to} tensor products with Araki--Woods factors.

\begin{Pro}\label{states of infinite tensors}
	The following statements hold true.
\begin{itemize}
	\item[$\rm(1)$] If all $M_n$ are type $\rm III_1$ factors, then the infinite tensor product $\ovt_{n\in \N}(M_n,\varphi_n)$ does not depend on the choice of $\{\varphi_n\}_n$.

	\item[$\rm(2)$] Let $(\lambda_n)_n \in (0,1)^{\N}$. If each $M_n$ is a type $\rm III_{\lambda_n}$ factor for $0<\lambda_n <1$, then there are faithful normal states $\psi_n$ on $M_n$ for all $n\in \N$ such that $(M_n)'_{\psi_n}\cap M_n =\C$ and 
	$$\ovt_{n\in \N}(M_n,\varphi_n) \simeq \ovt_{n\in \N}(M_n,\psi_n) \ovt R,$$
where $R$ is an Araki--Woods factor (possibly trivial).

	\item[$\rm(3)$] If all $M_n$ are type $\rm III_{0}$ factors, then there are faithful normal states $\psi_n$ on $M_n$ for all $n\in \N$ such that $(M_n)_{\psi_n}$ is of type $\rm II_1$, $(M_n)'_{\psi_n}\cap M_n \subset (M_n)'_{\psi_n}$ and 
	$$\ovt_{n\in \N}(M_n,\varphi_n) \simeq \ovt_{n\in \N}(M_n,\psi_n) .$$

	\item[$\rm(4)$] If all $M_n$ are type $\rm II_\infty$ factors, then there are finite projections $p_n \in M_n$ for all $n\in \N$ such that 
	$$\ovt_{n\in \N}(M_n,\varphi_n) \simeq \ovt_{n\in \N}(p_nM_np_n,\tau_n) \ovt R,$$
where $\tau_n$ are traces and $R$ is an Araki--Woods factor (which must be properly infinite).

	\item[$\rm(5)$] If all $M_n$ are type $\rm II_1$ factors, then there are projections $p_n \in M_n$ for all $n\in \N$ such that
	$$\ovt_{n\in \N}(M_n,\varphi_n) \simeq \ovt_{n\in \N}(p_nM_np_n,\tau_n) \ovt R,$$
where $\tau_n$ are traces and $R$ is an Araki--Woods factor (possibly trivial). 
\end{itemize}
\end{Pro}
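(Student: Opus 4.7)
The plan for parts (2)--(5) is to apply Lemma \ref{states on factors} to each $(M_n,\varphi_n)$ with tolerance $\varepsilon_n := 2^{-n}$, obtaining a matrix-unit decomposition $M_n = e_nM_ne_n \ovt \B(\ell^2_{k_n})$ together with states $\psi_n'$ on $e_nM_ne_n$ and $\omega_n$ on $\B(\ell^2_{k_n})$ such that $\|\varphi_n - \psi_n'\otimes\omega_n\|<2^{-n}$ and $\psi_n'$ carries the centralizer property dictated by the factor type of $M_n$. Since the errors are summable, Lemma \ref{states of infinite tensor} gives $\ovt_{n\in\N}(M_n,\varphi_n) \simeq \ovt_{n\in\N}(M_n,\psi_n'\otimes\omega_n)$, and the standard associativity of infinite tensor products of product states rewrites the right-hand side as
\[
\left(\ovt_{n\in\N}(e_nM_ne_n,\psi_n')\right) \ovt \left(\ovt_{n\in\N}(\B(\ell^2_{k_n}),\omega_n)\right),
\]
whose second factor is by definition an Araki--Woods factor $R$ (properly infinite in case (4) since $\B(\ell^2)$ is, possibly trivial in case (5), and entirely absent in case (3) where Lemma \ref{states on factors} forces $k_n=1$ and hence $e_n=1$). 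In case (2) I would then use the standard fact that reducing a type III factor by any nonzero projection yields an isomorphic factor to transport $\psi_n'$ to a state $\psi_n$ on $M_n$ satisfying $(M_n)_{\psi_n}'\cap M_n = \C$. In cases (4) and (5), setting $p_n := e_n$ and $\tau_n := \psi_n'$ gives the claimed form directly, since $p_nM_np_n$ is then of type II$_1$ and $\psi_n'$ is by construction its trace.

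Part (1) is different because Lemma \ref{states on factors} does not cover type III$_1$ factors. Here I would invoke the Connes--St\o rmer transitivity theorem, which asserts that on any type III$_1$ factor with separable predual the unitary orbit of a faithful normal state is norm-dense in the faithful normal state space. Given two sequences $(\varphi_n),(\psi_n)$ of faithful normal states, I would choose unitaries $u_n \in M_n$ with $\|\psi_n - \varphi_n \circ \Ad u_n\| < 2^{-n}$, apply Lemma \ref{states of infinite tensor} to obtain $\ovt_n(M_n,\varphi_n\circ\Ad u_n) \simeq \ovt_n(M_n,\psi_n)$, and then observe that the algebraic automorphism $\otimes_n \Ad u_n$ on the algebraic tensor product of the $M_n$ intertwines the product states $\otimes_n (\varphi_n\circ\Ad u_n)$ and $\otimes_n \varphi_n$, hence extends via the GNS construction to a $*$-isomorphism $\ovt_n(M_n,\varphi_n\circ\Ad u_n) \simeq \ovt_n(M_n,\varphi_n)$. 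Composing the two isomorphisms yields the desired independence of the state sequence.

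The main obstacle I anticipate is the bookkeeping for the regrouping identity $\ovt_n(A_n \ovt B_n, \alpha_n \otimes \beta_n) \simeq (\ovt_n(A_n,\alpha_n)) \ovt (\ovt_n(B_n,\beta_n))$ on which the whole argument rests; this is standard but deserves an explicit verification at the level of GNS completions, since the ITPFI structure depends sensitively on the product state. The remaining small points, such as verifying that the centralizer conditions of Lemma \ref{states on factors} transport across the type III isomorphism $e_nM_ne_n\simeq M_n$ in case (2), are routine because such conditions are isomorphism-invariant.
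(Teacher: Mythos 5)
Your proposal is correct and follows essentially the same route as the paper: parts (2)--(5) via Lemma \ref{states on factors} with summable tolerances combined with Lemma \ref{states of infinite tensor} and the regrouping of product states, and part (1) via Connes--St\o rmer transitivity plus Lemma \ref{states of infinite tensor}. Your use of the equivalence of all nonzero projections in a $\sigma$-finite type III factor to transport the state in case (2) is exactly the point the paper singles out in its proof.
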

\begin{proof}
	Statement (1) is a straightforward consequence of Lemma \ref{states of infinite tensor} and Connes--St$\rm \o$rmer's transitivity \cite[THEOREM 4]{CS76}, while all others are of 
Lemma \ref{states of infinite tensor} and \ref{states on factors}. 
Note that for (2), we need the fact that every projections in a $\sigma$-finite type III factor are equivalent. 
\end{proof}

	We have two corollaries. We will use the first one in the proof of the main theorem.

\begin{Cor}\label{type of infinite tensor}
	If $M_n$ has a large centralizer for all $n\in \N$, then $\ovt_{n\in \N}(M_n,\varphi_n)$ has a large centralizer.
\end{Cor}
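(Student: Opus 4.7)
The plan is to first pass to a model infinite tensor product whose tensor factors carry states with large, tractable centralizers, and then to realize the large centralizer of the whole infinite tensor product as an infinite tensor product of these centralizers.

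More precisely, I would first partition $\N$ according to the type of $M_n$ and apply Proposition \ref{states of infinite tensors} item by item (using Lemma \ref{bicentralizer lemma}(2) to produce a suitable state in the type $\rm III_1$ case), aggregating the various Araki--Woods contributions into a single amenable factor $R$. This yields a decomposition $M \simeq \tilde M \ovt R$, where $\tilde M = \ovt_{n\in \N}(\tilde M_n,\psi_n)$, each $\tilde M_n$ is either $M_n$ or a corner $p_nM_np_n$, and each $\psi_n$ is a ``good'' state in the sense that $(\tilde M_n)_{\psi_n}$ is a finite, type $\rm II_1$ von Neumann algebra satisfying $(\tilde M_n)_{\psi_n}' \cap \tilde M_n \subset (\tilde M_n)_{\psi_n}$.

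Next, I would set $N_n:=(\tilde M_n)_{\psi_n}$ with tracial restriction $\tau_n:=\psi_n|_{N_n}$, and form the finite von Neumann algebra $N:=\ovt_{n\in \N}(N_n,\tau_n)$. Compatibility of the states gives the inclusion $N \subset \tilde M$, and the $\psi_n$-preserving Takesaki conditional expectations $E_n\colon \tilde M_n \to N_n$ assemble into a faithful normal conditional expectation $\tilde M \to N$. The core step is to prove $N' \cap \tilde M \subset N$: for $x \in N' \cap \tilde M$, splitting $\tilde M = \tilde M_n \ovt \tilde M_{\hat n}$ and applying the tensor commutant formula for inclusions with expectation gives $x \in (N_n' \cap \tilde M_n)\ovt \tilde M_{\hat n} \subset N_n \ovt \tilde M_{\hat n}$ for every $n$. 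The slice expectations $E_{[1,k]}:=\id \otimes \psi_{k+1}\otimes \psi_{k+2}\otimes \cdots \colon \tilde M \to \tilde M_1 \ovt \cdots \ovt \tilde M_k$ therefore send $x$ into $\bigcap_{n\leq k} \bigl(N_n \ovt (\tilde M_1\ovt \cdots \widehat{\tilde M_n}\cdots \ovt \tilde M_k)\bigr) = N_1 \ovt \cdots \ovt N_k \subset N$ for every $k$, and the $L^2$-convergence $E_{[1,k]}(x)\to x$ together with $\sigma$-weak closedness of $N$ forces $x \in N$.

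Finally, to transfer the large centralizer from $\tilde M$ to $M$, I would combine $N \subset \tilde M$ with any large centralizer witness $N_R \subset R$, which exists because $R$ is an amenable factor (covered by Lemma \ref{bicentralizer lemma}): the subalgebra $N \ovt N_R \subset \tilde M \ovt R \simeq M$ is then finite, is with expectation, and by another application of the tensor commutant formula has its relative commutant contained in itself. I expect the main obstacle to be the first step: it requires a careful case analysis across the type classification of the $M_n$ and a clean bookkeeping argument to merge the different Araki--Woods contributions (one per type class) into a single amenable factor $R$ in a way compatible with Lemma \ref{states of infinite tensor} and Proposition \ref{states of infinite tensors}.
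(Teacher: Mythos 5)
Your proposal is correct and follows essentially the same route as the paper: reduce via Proposition \ref{states of infinite tensors} (absorbing the Araki--Woods contributions, which have large centralizers) to the case of states with $(M_n)_{\varphi_n}'\cap M_n\subset (M_n)_{\varphi_n}$, and then verify the relative commutant condition for the resulting tensor product. The only cosmetic difference is that you take the witness to be $\ovt_n (M_n)_{\varphi_n}$ and spell out the slice-map argument, whereas the paper uses the full centralizer $M_\varphi$ of the product state and leaves that verification as ``easy to see''; both choices work.
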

\begin{proof}
	Put $(M,\varphi):=\ovt_{n\in \N}(M_n,\varphi_n)$. Since any Araki--Woods factor has a large centralizer, by Proposition \ref{states of infinite tensors}, we may assume that $(M_n)_{\varphi_n}'\cap M_n \subset (M_n)_{\varphi_n}$ for all $n\in \N$. Then it is easy to see that $M_\varphi'\cap M\subset M_\varphi$.
\end{proof}

\begin{Cor}
	If $M_n$ has separable predual for all $n\in \N$, then $\ovt_{n\in \N}(M_n,\varphi_n)$ is McDuff (unless it is of type I).
\end{Cor}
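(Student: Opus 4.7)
The plan is to exhibit a copy of the hyperfinite II$_1$ factor $R$ as a tensor factor of $M := \ovt_{n\in \N}(M_n,\varphi_n)$, that is, to produce a decomposition $M \simeq R \ovt M'$ for some von Neumann algebra $M'$. Once this is achieved, the identity $R \ovt R \simeq R$ immediately yields
$$M \ovt R \simeq R \ovt M' \ovt R \simeq R \ovt M' \simeq M,$$
which is McDuffness.

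To extract $R$, I would first partition $\N$ by the type of each factor $M_n$; since $\N$ is infinite, some type-subset $X \subset \N$ is infinite. To this $X$ I would apply Proposition \ref{states of infinite tensors} (or its part (1) for the III$_1$ case) to obtain a decomposition
$$\ovt_{n\in X}(M_n,\varphi_n) \simeq \ovt_{n\in X}(M_n,\psi_n) \ovt R_{\mathrm{AW}},$$
where $R_{\mathrm{AW}}$ is an Araki--Woods factor (possibly trivial) and the $\psi_n$ are chosen so that $(M_n)_{\psi_n}$ is of type II. In the III$_1$ case, since the proposition does not directly produce such $\psi_n$, I would choose $\psi_n$ separately using Haagerup's theorem on trivial bicentralizers (applicable via the separable predual hypothesis through Lemma \ref{bicentralizer lemma}(2)), which yields a state whose centralizer is a II$_1$ factor.

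If $R_{\mathrm{AW}}$ is infinite-dimensional, then being an infinite-dimensional injective factor it already absorbs $R$, and we are done. Otherwise $R_{\mathrm{AW}}$ is trivial and we work with $\ovt_{n\in X}(M_n,\psi_n)$. Inside each type-II centralizer $(M_n)_{\psi_n}$ I would select two equivalent projections $e_{11}^{(n)}, e_{22}^{(n)}$ summing to the identity with $\psi_n(e_{ii}^{(n)})=1/2$, together with matrix unit partial isometries, thereby producing a copy of $M_2(\C)$ inside $(M_n)_{\psi_n}$ on which $\psi_n$ restricts to the normalized trace. Since this copy lies in the centralizer, the standard matrix unit decomposition gives $M_n \simeq M_2(\C) \ovt Q_n$ with $\psi_n \simeq \mathrm{tr}_2 \otimes \psi_n|_{Q_n}$. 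Taking the infinite tensor product over $n\in X$ then yields
$$\ovt_{n\in X}(M_n,\psi_n) \simeq \ovt_{n\in X}(M_2(\C), \mathrm{tr}_2) \ovt \ovt_{n\in X}(Q_n, \psi_n|_{Q_n}) \simeq R \ovt M'_X,$$
since the ITPFI of $M_2(\C)$'s with tracial state is precisely $R$. Tensoring with the remaining factors $\ovt_{n \notin X}(M_n,\varphi_n)$ gives the desired $M \simeq R \ovt M'$.

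The main obstacle I expect is the III$_1$ case, which falls outside the scope of Proposition \ref{states of infinite tensors} and requires invoking Haagerup's deep result on trivial bicentralizers; this is where the separable predual hypothesis is essential. A second technical point is to check that, under the non-type-I hypothesis on $M$, the extraction above always succeeds---namely, that some infinite type-subset $X$ provides either an infinite-dimensional Araki--Woods summand or a centralizer contribution of type II, so that $R$ genuinely appears as a tensor factor; if all such contributions collapsed to type I, the resulting $M$ would itself be of type I, contradicting the hypothesis.
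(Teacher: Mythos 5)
Your overall route is the same as the paper's. The paper's proof is precisely ``since any non-type $\rm I$ Araki--Woods factor is McDuff, by Proposition \ref{states of infinite tensors} we may assume each $(M_n)_{\varphi_n}$ is of type $\rm II_1$; then the conclusion follows easily'' --- and your extraction of a tracial copy of $M_2(\C)$ from each type $\rm II$ centralizer, the assembly of $R=\ovt_{n}(M_2(\C),\mathrm{tr}_2)$ as a tensor factor, and the use of $R\ovt R\simeq R$ are exactly the intended content of ``follows easily''.

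There is, however, one step as written that does not go through: your treatment of the type $\rm III_1$ components. You propose to produce a state with type $\rm II_1$ centralizer ``using Haagerup's theorem on trivial bicentralizers, applicable via the separable predual hypothesis through Lemma \ref{bicentralizer lemma}(2)''. Haagerup's theorem gives triviality of the bicentralizer only for the \emph{injective} type $\rm III_1$ factor; for a general type $\rm III_1$ factor with separable predual this is the open Connes bicentralizer problem, and Lemma \ref{bicentralizer lemma}(2) is only an equivalence (large centralizer $\Leftrightarrow$ existence of an irreducible centralizer state), not an existence statement. Fortunately the $\rm III_1$ case is the easiest one and needs no such input: by Proposition \ref{states of infinite tensors}(1), which rests on Connes--St\o rmer transitivity together with Lemma \ref{states of infinite tensor}, the infinite tensor product over the $\rm III_1$ components is independent of the choice of states, so you may simply take $\psi_n=\mathrm{tr}_2\otimes\rho_n$ under any identification $M_n\simeq M_2(\C)\ovt Q_n$, and the tracial $2\times 2$ matrix unit is present by fiat. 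Separately, your closing assertion that ``if all such contributions collapsed to type $\rm I$, the resulting $M$ would itself be of type $\rm I$'' is not a proof: the configuration one has to exclude is a single non-McDuff, non-type $\rm I$ component tensored with a type $\rm I$ Araki--Woods tail, and that requires an argument. The paper's own two-line proof is equally silent on this point (it tacitly assumes every $M_n$ is non-type $\rm I$, as is the case in all of its applications), so this is a shared blind spot rather than a defect specific to your write-up.
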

\begin{proof}
	Since any (non-type I) Araki--Woods factor is McDuff, by Proposition \ref{states of infinite tensors}, we may assume that $(M_n)_{\varphi_n}$ is of type $\rm II_1$ for all $n\in \N$. Then the conclusion follows easily.
\end{proof}

\section{\bf Relative amenability for subalgebras}\label{Relative amenability for subalgebras}

	In this section, we define and study relative amenability for general inclusions of von Neumann algebras. 
The goal of this section is to prove two lemmas, which are necessary for our main theorem. For this, we prove a characterization of relative amenability in terms of continuous cores.
Since results in Appendix will be used, we refer the reader to the appendix section before starting this section.

	The following definition is a generalization of \cite{OP07} in which they treat only finite von Neumann algebras. 

\begin{Def}\label{relative amenable def1}\upshape
	Let $B\subset M$ be von Neumann algebras, $p\in M$ a projection and $A\subset pMp$ a von Neumann subalgebra. 
\begin{itemize}
	\item[$\rm (1)$] Let $z$ be the central support projection of  $p$ in $M$. We say that \textit{$A$ is semidiscrete relative to $B$ in $M$} if we have 
	$${}_{Mz} L^2(zMp)_A \prec {}_{Mz} L^2(zM)\otimes_{Bz} L^2(zMp)_A .$$
	\item[$\rm (2)$] Let $E_A\colon M \to A$ be a faithful normal conditional expectation. We say that the pair \textit{$(A,E_A)$ is injective relative to $B$ in $M$}  
if there exists a conditional expectation from $p\langle M,B\rangle p$ onto $A$ which restricts to $E_A$ on $pMp$. In this case we write as $(A,E_A)\lessdot_MB$.
\end{itemize}
\end{Def}

Observe that $(A,E_A)$ is injective relative to $M$ in $B$ if and only if the $pMp$-$B$-bimodule $pL^2(M)$ is left $(A,E_A)$-injective (see Appendix). Also $A$ is semidiscrete relative to $M$ in $B$ if and only if the $pMp$-$B$-bimodule $L^2(pM)$ is left $A$-semidiscrete (to see the if part, use Lemma \ref{lemma for reduction}). So Definition \ref{relative amenable def1} is a special case of Definition \ref{relative amenable def2}. 

	In item (1) above, the projection $z$ is necessary to get injectivity of the left $M$-action of the bimodule ${}_M L^2(Mp)_A$. We sometimes write this condition as, by omitting $z$, 
	$${}_{M} L^2(Mp)_A \prec {}_{M} L^2(M)\otimes_{B} L^2(Mp)_A .$$
In item (2) above, as will be explained in Remark \ref{independence of expectation}, the relative injectivity does not depend on the choice of $E_A$ if $B \subset M$ is \textit{with operator valued weight}, that is, there is a faithful normal operator valued weight from $M$ onto $B$. We refer the reader to \cite{Ha77a,Ha77b} for the theory of operator valued weights. In this case, we will simply write as $A\lessdot_MB$.

	For $M,B,p,A,E_A$ as in Definition \ref{relative amenable def1}(2) and assuming $B\subset M$ is with operator valued weight $E_B$, we will use the following notation. Let $\varphi_B$ and $\psi_A$ be faithful normal semifinite weights on $B$ and $A$ respectively, and put $\varphi:=\varphi_B\circ E_B$ and $\psi:=\psi_A\circ E_A$. We further extend $\psi$ on $M$ by adding a faithful normal semifinite weight on $(1-p)M(1-p)$, so that $\sigma^\psi_t(p)=p$ for all $t\in \R$. We have $\sigma_t^\varphi|_B=\sigma^{\varphi_B}_t$ and $\sigma_t^\psi|_A=\sigma^{\psi_A}_t$ for all $t\in \R$, and therefore there are inclusions
	$$C_\varphi(B)\subset C_\varphi(M), \quad C_\psi(A) \subset C_\psi(pMp). $$
Note that the second inclusion depends only on $\psi|_{pMp}$. Let $\Pi_{\varphi,\psi}\colon C_\psi(M)\to C_\varphi(M)$ be the canonical $\ast$-isomorphism, which is the identity on $M$.

	The following theorem establishes the equivalence of the relative injectivity of the inclusion and the one in the continuous core. Condition (2) below is particularly important to us and will be used later in this section. We note that condition (4) below is new, since it does not appear when $A=M$.

\begin{Thm}\label{injectivity by core for relative amenable}
	Let $M,B,p,A,E_A$ be as in Definition \ref{relative amenable def1}(2) and assume that $B\subset M$ is with operator valued weight $E_B$. Then using the notation introduced above, the following conditions are equivalent. 
	\begin{itemize}
		\item[$(1)$] We have $(A,E_A)\lessdot_M B$.
		\item[$(2)$] We have $C_\psi(A)\lessdot_{C_\psi(M)}B$.
		\item[$(3)$] We have $\Pi_{\varphi,\psi}\left(C_\psi(A)\right) \lessdot_{C_\varphi(M)}C_\varphi(B)$.
		\item[$(4)$] There is a ucp map $\Psi\colon p\langle M,B \rangle p \to \langle pMp,A \rangle$ such that $\Psi(x)=x$ for all $x\in pMp$. 
	\end{itemize}
\end{Thm}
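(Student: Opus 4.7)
The plan is to establish four implications: $(4) \Rightarrow (1)$, $(1) \Rightarrow (4)$, $(1) \Leftrightarrow (2)$, and $(2) \Leftrightarrow (3)$.

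For $(4) \Rightarrow (1)$, compose $\Psi$ with the canonical normal conditional expectation $\langle pMp,A\rangle \to A$ coming from $E_A$. The result is a ucp map $p\langle M,B\rangle p \to A$ whose restriction to $pMp$ is $E_A$ since $\Psi$ is the identity there; this is therefore a conditional expectation onto $A$ by Tomiyama's theorem.

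For $(1) \Rightarrow (4)$, the plan is to identify $\langle pMp, A\rangle$ with $(JAJ)' \subset \B(pL^2(M)p)$ and to define $\Psi(T)$ via the sesquilinear form whose matrix coefficients on vectors $x\hat\psi, y\hat\psi$ with $x,y \in pMp$ are governed by the $A$-valued slice $\Phi(y^*Tx)$. Boundedness of this form, its extension to all of $pL^2(M)p$, and commutation with $JAJ$ should follow from positivity of $\Phi$, the Kadison--Schwarz inequality, and the $A$-bimodularity built into the construction, yielding a normal ucp map $\Psi$ that is the identity on $pMp$.

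For $(1)\Leftrightarrow (2)$, the key point is that $E_A$ intertwines $\sigma^\psi$ and $\sigma^{\psi_A}$, so $\sigma^\psi$ globally preserves $A$, the inclusion $C_\psi(A) \subset C_\psi(M)$ is well-defined, and $E_A$ lifts canonically to a normal expectation $\widetilde{E}_A\colon C_\psi(M) \to C_\psi(A)$. Rather than directly averaging a given $\Phi$ over the noncompact group $\R$, the plan is to go through the semidiscreteness formulation in Definition~\ref{relative amenable def1}(1) together with the bimodule results of the Appendix: left $(A,E_A)$-injectivity of the $pMp$-$B$-bimodule $pL^2(M)$ transfers to left $(C_\psi(A),\widetilde E_A)$-injectivity of the corresponding $pC_\psi(M)p$-$B$-bimodule, and back. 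The equivalence $(2)\Leftrightarrow(3)$ then uses $\Pi_{\varphi,\psi}$, which is the identity on $M$ and so sends $B \subset C_\psi(M)$ to $B \subset C_\varphi(M)$; since $B \subset C_\varphi(B)$ one has $\langle C_\varphi(M), C_\varphi(B)\rangle \subset \langle C_\varphi(M), B\rangle$, giving $(2)\Rightarrow(3)$ by restriction and $(3)\Rightarrow(2)$ by averaging an expectation from $(3)$ against the unitaries $\lambda_s^\varphi \in C_\varphi(B)$.

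The main obstacle should be $(1)\Rightarrow(4)$, which is genuinely new compared with the classical $A = M$ case where (4) collapses to (1); it demands careful handling of the sesquilinear forms arising from $\Phi$ and verification of their boundedness and $JAJ$-commutation. A secondary subtlety lies in $(1)\Leftrightarrow(2)$, where the potential failure of $\sigma^\psi$-equivariance of a given $\Phi$ is sidestepped by passing to the bimodule/semidiscreteness formulation of relative amenability.
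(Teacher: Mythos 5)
Your proposal is correct in substance, but it follows a partially different route from the paper, and it is worth recording where the two diverge. The implications $(4)\Rightarrow(1)$ (compression by the Jones projection of $E_A$) and $(1)\Leftrightarrow(2)$ (via the bimodule $L^2(C_\psi(M))\otimes_M pL^2(M)$ and Theorem \ref{injectivity by core}, precisely to avoid averaging a non-equivariant $\Phi$ over $\R$) coincide with the paper's argument. The genuine differences are in the remaining two implications. The paper never proves $(1)\Rightarrow(4)$ directly: it runs the cycle $(1)\Rightarrow(2)\Rightarrow(3)\Rightarrow(4)\Rightarrow(1)$, and the step $(3)\Rightarrow(4)$ exploits that $\Pi_{\varphi,\psi}(C_\psi(A))$ is \emph{semifinite}, so Theorem \ref{relative amenable theorem2} upgrades relative injectivity in the core to the ucp map into the basic construction, which is then pushed back down through the crossed-product identifications. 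Your direct construction of $\Psi$ from $\Phi$ via the form $(x,y)\mapsto \psi\bigl(\Phi(y^*Tx)\bigr)$ does work for arbitrary $A$ and is more self-contained, but two points need care: (i) commutation of $\Psi(T)$ with $J_{pMp}AJ_{pMp}$ does not follow from $A$-bimodularity of $\Phi$ alone in the non-tracial case --- the right action of $a\in A$ on $\Lambda_\psi(x)$ carries a modular twist $\sigma^{\psi_A}_{i/2}$, and one must invoke the KMS condition for $\psi_A$ (together with a reduction to analytic elements and to $x\in\mathfrak n_\psi$, since $\psi$ is only a weight); (ii) the resulting $\Psi$ is \emph{not} normal in general, since $\Phi$ need not be normal --- this is harmless because condition $(4)$ does not require normality, but you should drop that claim. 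Finally, your $(3)\Rightarrow(2)$ by averaging $\langle C_\varphi(M),B\rangle$ over $\operatorname{Ad}(J\lambda^\varphi_sJ)$ (whose fixed-point algebra is $\langle C_\varphi(M),C_\varphi(B)\rangle$, using amenability of $\R$ and the fact that $C_\varphi(M)$ is pointwise fixed) and then composing with the expectation from $(3)$ is a valid shortcut that the paper replaces by closing the cycle through $(4)$ and $(1)$; both are fine, yours is shorter but non-constructive, while the paper's reuses machinery it needs anyway.
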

\begin{proof}
	Observe first that if the central support projection $z$ of $p$ in $M$ is not 1, then all statements in this theorem are equivalent to the same statements but for the inclusions $A\subset pMzp$ and $Bz\subset Mz$. Hence up to replacing $z$ with $1_M$, without loss of generality, we may assume $z=1$.

Before starting the proof, we mention that, since there is an operator valued weight $E_B$, there is also an operator valued weight from $\langle M,B\rangle$ into $M$. This follows from \cite[Theorem 5.9]{Ha77b}. 

\noindent
	(1)$\Leftrightarrow$(2) This is exactly the equivalence of (1) and (2) in Theorem \ref{injectivity by core}, by using Lemma \ref{lemma for reduction}.

\noindent
	(2)$\Rightarrow$(3) Assuming item (2), we have $\Pi_{\varphi,\psi}(C_\psi(A))\lessdot_{C_\varphi(M)}B$. Then item (3) holds by definition, since there is a canonical inclusion $\langle C_\varphi(M), C_\varphi(B)\rangle \subset \langle C_\varphi(M), B\rangle$.

\noindent
	(3)$\Rightarrow$(4)
Since $\Pi_{\varphi,\psi}(C_\varphi(A))$ is semifinite, item (3) and Theorem \ref{relative amenable theorem2}(3) implies that there is a ucp map
	$$\Psi\colon \mathcal{L}_{C_\varphi(B)}(pL^2(C_\varphi(M))_{C_\varphi(B)}) \to \langle pC_\varphi(M)p,\Pi_{\varphi,\psi}(C_\varphi(A))$$ 
such that $\Psi(x)=x$ for all $x\in pC_\varphi(M)p$. Observe that there are identifications 
\begin{align*}
	& \mathcal{L}_{C_\varphi(B)}(pL^2(C_\varphi(M))_{C_\varphi(B)})= p\langle C_\varphi(M),C_\varphi(B)\rangle p =p\langle M,B\rangle \rtimes_\alpha \R \, p, \\
	& \langle pC_\varphi(M)p,\Pi_{\varphi,\psi}(C_\varphi(A)\rangle\simeq \langle C_\psi(pMp),C_\psi(A)\rangle = \langle pMp,A\rangle \rtimes_\beta \R,
\end{align*}
where $\alpha_t=\Ad\Delta_\varphi^{it}$ and $\beta_t=\Ad\Delta_\psi^{it}$ for $t\in \R$, and they canonical contain $p\langle M,B \rangle p$ and $\langle pMp,A \rangle$ respectively. By restriction, we have a map $\Psi\colon p\langle M,B \rangle p \to \langle pMp,A\rangle \rtimes_\beta \R$ such that $\Psi(x)=x$ for all $x\in pMp$. Finally composing this $\Psi$ with a conditional expectation from $\langle pMp,A\rangle \rtimes_\beta \R$ onto $\langle pMp,A\rangle$ and we get a desired ucp map.

\noindent
	(4)$\Rightarrow$(1)
This is trivial by composing the compression map by the Jones projection of $E_A$. 
\end{proof}

\begin{Rem}\upshape\label{independence of expectation}
	In this theorem, condition (4) does not depend on the choice of $E_A$. Hence under the assumption that $B\subset M$ is with operator valued weight, the relative injectivity does not depend the choice of $E_A$. More precisely, if $(A,E_A)\lessdot_MB$ for some $E_A$, then we have $(A,E_A')\lessdot_MB$ for any other faithful normal conditional expectation $E_A'$.
\end{Rem}

The following corollary is an immediate consequence of condition (4) above. It is a generalization of \cite[Proposition 2.4(3)]{OP07}. Our proof here is much simpler and can be applied to non tracial von Neumann algebras.

\begin{Cor}\label{transitivity}
	Let $B\subset M$ and $A\subset pMp$ be von Neumann algebras with expectations $E_A,E_B$ and let $N\subset M$ be a von Neumann subalgebra with an operator valued weight. If $(A,E_A) \lessdot_MB$ and $(B,E_B) \lessdot_MN$, then $(A,E_A) \lessdot_MN$.
\end{Cor}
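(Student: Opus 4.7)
My plan is to invoke condition $(4)$ of Theorem \ref{injectivity by core for relative amenable} twice, compose the two ucp maps, and then apply the theorem in the other direction to read off the conclusion.

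First, I would extract the ucp data. Applying the implication $(1)\Rightarrow(4)$ of Theorem \ref{injectivity by core for relative amenable} to the assumption $(A,E_A)\lessdot_M B$ yields a ucp map
$$\Psi_1\colon p\langle M,B\rangle p \to \langle pMp, A\rangle$$
with $\Psi_1(x)=x$ for every $x\in pMp$. Applying the same implication to $(B,E_B)\lessdot_M N$ (here the ``small'' subalgebra is $B\subset M$, with projection $1$) yields a ucp map
$$\Psi_2\colon \langle M,N\rangle \to \langle M,B\rangle$$
with $\Psi_2(x)=x$ for every $x\in M$; this use of condition $(4)$ is legitimate because $N\subset M$ is assumed to be with operator valued weight.

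The key step is then to compress $\Psi_2$ to the $p$-corner. Since $\Psi_2$ is ucp and pointwise fixes the unital C$^*$-subalgebra $M\subset \langle M,N\rangle$, a standard multiplicative domain argument (Choi's theorem) places $M$ inside the multiplicative domain of $\Psi_2$. Consequently $\Psi_2$ is $M$-bimodular, and in particular $\Psi_2(p)=p$ and $\Psi_2(pyp)=p\Psi_2(y)p$ for all $y\in\langle M,N\rangle$. Thus $\Psi_2$ restricts to a ucp map
$$\widetilde{\Psi}_2\colon p\langle M,N\rangle p \to p\langle M,B\rangle p.$$

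Finally, the composition $\Psi_1\circ \widetilde{\Psi}_2\colon p\langle M,N\rangle p \to \langle pMp, A\rangle$ is ucp and restricts to the identity on $pMp$. Invoking the implication $(4)\Rightarrow(1)$ of Theorem \ref{injectivity by core for relative amenable} applied to the inclusion $N\subset M$ and the subalgebra $A\subset pMp$, we conclude that $(A,E_A)\lessdot_M N$, as desired. There is no real obstacle here: everything hinges on the multiplicative domain observation that lets $\Psi_2$ descend to the corner cut by $p$, after which the composition is immediate.
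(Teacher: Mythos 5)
Your proof is correct and follows essentially the same route as the paper, which also composes the two ucp maps furnished by condition (4) of Theorem \ref{injectivity by core for relative amenable}. You merely make explicit the multiplicative-domain argument showing that $\Psi_2$ descends to the corner $p\langle M,N\rangle p$, a detail the paper leaves implicit.
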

\begin{proof}
	Let $\Psi\colon \langle M,N\rangle \to \langle M,B \rangle$ and $\Phi\colon p\langle M,B\rangle p \to \langle pMp,A \rangle$ be ucp maps as in Theorem \ref{injectivity by core for relative amenable}(4). Then the composition $\Phi\circ \Psi \colon p\langle M,N\rangle p \to \langle pMp,A \rangle$ works.
\end{proof}

We also prove the following useful properties.

\begin{Pro}
	Let $M,B,p,A,E_A$ be as in Definition \ref{relative amenable def1}(2) and assume that $B\subset M$ is with operator valued weight. If $(A,E_A)\lessdot_M B$, then there is a conditional expectation $E\colon p\langle M,B\rangle p\to A$ which restricts to $E_A$ on $M$ and which is approximated by normal ccp maps from $p\langle M,B\rangle p$ to $A$ in the point $\sigma$-weak topology. 
\end{Pro}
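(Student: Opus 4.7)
The plan is to produce the required expectation by composing the ucp map furnished by Theorem \ref{injectivity by core for relative amenable}(4) with a canonical normal conditional expectation arising from the Jones basic construction for $A\subset pMp$, and then to obtain the approximation property from a density statement for normal ucp maps. Concretely, I would first apply Theorem \ref{injectivity by core for relative amenable}(4) to obtain a ucp map $\Psi\colon p\langle M,B\rangle p\to \langle pMp,A\rangle$ with $\Psi(x)=x$ for all $x\in pMp$. Next, fixing a faithful normal semifinite weight $\psi_A$ on $A$ and setting $\psi:=\psi_A\circ E_A$, I would form the basic construction $\langle pMp,A\rangle$ on $L^2(pMp,\psi)$ with Jones projection $e_A$. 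The standard identities $e_A\in A'$, $e_A x e_A=E_A(x)e_A$ for $x\in pMp$, and $e_A\langle pMp,A\rangle e_A=Ae_A$ all hold for non-tracial $E_A$, thanks to Kadison's inequality $E_A(x)^*E_A(x)\le E_A(x^*x)$ which guarantees that $E_A$ maps $\mathfrak{n}_\psi$ into $\mathfrak{n}_{\psi_A}$. Using them, the rule $F(y)e_A=e_A y e_A$, composed with the canonical normal $\ast$-isomorphism $Ae_A\simeq A$, defines a normal conditional expectation $F\colon\langle pMp,A\rangle\to A$ satisfying $F|_{pMp}=E_A$.

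I would then set $E:=F\circ\Psi\colon p\langle M,B\rangle p\to A$. Because $\Psi|_{pMp}=\id$ and $F|_A=\id_A$, the map $E$ fixes $A$ pointwise and restricts to $E_A$ on $pMp$, so it is a conditional expectation onto $A$. For the approximation, I would invoke the standard fact that the set of normal ucp maps between two von Neumann algebras is dense in the set of all ucp maps in the point $\sigma$-weak (BW) topology, to obtain a net of normal ucp maps $\Psi_i\colon p\langle M,B\rangle p\to\langle pMp,A\rangle$ with $\Psi_i\to\Psi$ pointwise in $\sigma$-weak. The normality of $F$ then implies that each $F\circ\Psi_i$ is a normal ccp map into $A$, and for every $x\in p\langle M,B\rangle p$ and $\omega\in A_*$, since $\omega\circ F\in\langle pMp,A\rangle_*$, one has $(\omega\circ F\circ\Psi_i)(x)\to(\omega\circ F\circ\Psi)(x)=\omega(E(x))$, which is exactly the desired approximation.

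The main obstacle is justifying the BW-density of normal ucp maps into $\langle pMp,A\rangle$. This density is not among the equivalences recorded in Theorem \ref{injectivity by core for relative amenable}, so one must either extract it from the appendix tools underlying Theorem \ref{relative amenable theorem2}(3) or argue directly by a Stinespring dilation $\Psi=V^*\pi(\,\cdot\,)V$, splitting the dilating $\ast$-representation $\pi$ into its normal and singular parts and approximating the singular part using the weak-$\ast$ density of normal functionals in the dual of $p\langle M,B\rangle p$, together with a suitable compression back into $\langle pMp,A\rangle$ using the standard form. Once this density is in hand, the remaining verifications (normality of $F$, validity of the Jones-projection identities in the non-tracial setting, and the bimodule compatibilities) are routine.
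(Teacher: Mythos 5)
Your construction of $E=F\circ\Psi$ is correct as far as it goes: the normal conditional expectation $F\colon\langle pMp,A\rangle\to A$ obtained from the Jones projection does exist in the non-tracial setting, and $E=F\circ\Psi$ restricts to $E_A$ on $pMp$. But the mere existence of such an $E$ is just the hypothesis $(A,E_A)\lessdot_M B$ restated; the entire content of the proposition is the approximation by \emph{normal} ccp maps, and that is exactly where your argument has a genuine gap. The ``standard fact'' you invoke --- that normal ucp maps between two von Neumann algebras are point-$\sigma$-weakly dense in all ucp maps --- is not standard and cannot be assumed. Already in the special case of a conditional expectation $\B(H)\to M$, a point-$\sigma$-weak approximation by normal ucp maps $\Phi_i\colon\B(H)\to M$ would restrict on $M$ to normal cp maps factoring through the coarse correspondence and converging to $\id_M$, i.e.\ it would force $M$ to be semidiscrete; so your density claim is at least as deep as Connes' theorem that injectivity implies semidiscreteness, which is precisely the (relative) statement the appendix is devoted to proving --- invoking it here is circular. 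Your proposed repair via a Stinespring dilation and the normal/singular decomposition of $\pi$ does not close the gap either: after splitting $\pi=\pi_n\oplus\pi_s$, the compressions $V^*\pi_n(\cdot)V$ land in $\B(K)$ rather than in $\langle pMp,A\rangle$, and pushing them back into the target would require a (normal) conditional expectation onto $\langle pMp,A\rangle$, which is not available.

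For comparison, the paper's proof is a one-line application of Corollary \ref{approximation of expectation}(2) to the inclusions $A\subset pMp\subset p\langle M,B\rangle p$, the extra hypothesis there being that $pMp\subset p\langle M,B\rangle p$ is with operator valued weight (which holds by Haagerup's theorem since $B\subset M$ is). The real work sits in the appendix: injectivity is first upgraded to the weak containment ${}_{M}L^2(M)_{A}\prec{}_{M}K\otimes_B\overline{K}_{A}$ via the continuous core (Theorem \ref{injectivity by core}, which reduces to the semifinite case where Day's trick and the Powers--St{\o}rmer inequality apply), and the normal approximants are then read off explicitly as the maps $T\mapsto\sum_i\langle T\eta_i,\eta_i\rangle$ given by vectors $\eta_i$ in the W$^*$-Hilbert $A$-module $X(L^2(\widetilde N))$, normalized to be contractive using \cite[Lemma 1.6]{AD93}. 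If you want to keep your outline, you must replace the density claim by this weak-containment route; the rest of your argument then becomes unnecessary.
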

\begin{proof}
	Since the inclusion $pMp \subset p\langle M,B\rangle p $ is with operator valued weight, we can apply Corollary \ref{approximation of expectation}(2) and get the conclusion.
\end{proof}

\begin{Cor}\label{amenability for tensor product}
	For $i=1,2$, let $M_i,B_i,p_i,A_i,E_{A_i}$ be as in Definition \ref{relative amenable def1}(2) and assume that $B_i\subset M_i$ is with operator valued weight. 
Then we have that $(A_i,E_{A_i})\lessdot_{M_i} B_i$ for $i=1,2$, if and only if $(A_1\ovt A_2,E_{A_1}\otimes E_{A_2})\lessdot_{M_1\ovt M_2} B_1\ovt B_2$.
\end{Cor}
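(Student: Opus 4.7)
The plan is to prove the two implications separately. The ``if'' direction (tensor injectivity implies each factor) is more direct: by Theorem \ref{injectivity by core for relative amenable}(4) applied to the tensor hypothesis, there is a ucp map
\[
\tilde{\Psi} \colon p_1\langle M_1, B_1\rangle p_1 \ovt p_2\langle M_2, B_2\rangle p_2 \to \langle p_1 M_1 p_1, A_1\rangle \ovt \langle p_2 M_2 p_2, A_2\rangle
\]
that is the identity on $p_1 M_1 p_1 \ovt p_2 M_2 p_2$. The required tensor factorizations of the basic constructions $\langle M_1\ovt M_2, B_1\ovt B_2\rangle = \langle M_1, B_1\rangle \ovt \langle M_2, B_2\rangle$ (and similarly for the target) follow from the commutant description $\langle M,B\rangle = (JBJ)'$ combined with the commutant theorem for von Neumann tensor products. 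Fixing a normal state $\omega_2$ on $\langle p_2 M_2 p_2, A_2\rangle$ (automatically with $\omega_2(p_2)=1$), the slice $\Psi_1(x) := (\id \otimes \omega_2)(\tilde{\Psi}(x \otimes p_2))$ defines a ucp map $p_1\langle M_1, B_1\rangle p_1 \to \langle p_1 M_1 p_1, A_1\rangle$ which equals $x$ for $x \in p_1 M_1 p_1$ (since $\tilde{\Psi}(x\otimes p_2) = x\otimes p_2$ there). By Theorem \ref{injectivity by core for relative amenable}(4), $(A_1,E_{A_1})\lessdot_{M_1} B_1$; the case $i=2$ is symmetric.

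For the ``only if'' direction, I would pass to the bimodule formulation and use the equivalence between relative injectivity and semidiscreteness of the canonical Hilbert bimodule (see the remark following Definition \ref{relative amenable def1} and the Appendix). The hypothesis then gives, for $i=1,2$,
\[
{}_{p_iM_ip_i}L^2(M_ip_i)_{A_i} \prec {}_{p_iM_ip_i}L^2(M_i)\otimes_{B_i}L^2(M_ip_i)_{A_i}.
\]
External tensor products of Hilbert bimodules preserve weak containment, so
\[
L^2(M_1p_1)\otimes L^2(M_2p_2) \prec [L^2(M_1)\otimes_{B_1}L^2(M_1p_1)] \otimes [L^2(M_2)\otimes_{B_2}L^2(M_2p_2)]
\]
as $(M_1\ovt M_2)$-$(A_1\ovt A_2)$-bimodules. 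After a Fubini-type rearrangement of the relative tensor products, the right-hand side is canonically isomorphic to $L^2(M_1\ovt M_2)\otimes_{B_1\ovt B_2}L^2((M_1\ovt M_2)(p_1\otimes p_2))$, while the left-hand side is $L^2((M_1\ovt M_2)(p_1\otimes p_2))$. This yields the semidiscreteness of $A_1\ovt A_2$ relative to $B_1\ovt B_2$ in $M_1\ovt M_2$, and hence the desired relative injectivity.

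The main obstacle is the Fubini-type rearrangement in the ``only if'' direction: one must verify that the external tensor product of two relative tensor product bimodules coincides, canonically as an $(M_1\ovt M_2)$-$(A_1\ovt A_2)$-bimodule, with a single relative tensor product over $B_1\ovt B_2$. In the non-tracial setting this requires care with the factorization of operator valued weights $E_{B_1\ovt B_2} = E_{B_1}\otimes E_{B_2}$ and with the Connes--Sauvageot relative tensor product construction; the key check is compatibility of the induced GNS Hilbert space structures under tensoring. An alternative, more analytic route would be to approximate the relative conditional expectations $E_i \colon p_i\langle M_i, B_i\rangle p_i \to A_i$ by normal ccp maps (as in the Proposition just above), tensor them spatially, and pass to a point $\sigma$-weak cluster point; however, this approach runs into the subtlety that the cluster point may fail to agree with $E_{A_1}\otimes E_{A_2}$ on all of $p_1M_1p_1\ovt p_2M_2p_2$, only on the algebraic tensor, which is why the bimodule argument is preferable.
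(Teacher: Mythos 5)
Your ``if'' direction is correct and is essentially the paper's argument: restrict the ucp map coming from the tensor hypothesis to $p_1\langle M_1,B_1\rangle p_1\otimes \C p_2$ and slice by a normal state on the second factor. The problem is the ``only if'' direction, where your proposed route has a genuine gap. You pass from relative injectivity of $(A_i,E_{A_i})$ to semidiscreteness of the standard bimodule ${}_{M_i}L^2(M_ip_i)_{A_i}$, but by Theorem \ref{relative amenable theorem2} the implication (injective $\Rightarrow$ semidiscrete) is only available when $A_i$ is \emph{semifinite}; here the $A_i$ are arbitrary subalgebras with expectation (possibly type III), and this is precisely the obstruction that the continuous-core machinery of Theorem \ref{injectivity by core for relative amenable} is designed to circumvent. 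Repairing this by working in the cores is not straightforward either, because the core of $M_1\ovt M_2$ with respect to a product weight is a crossed product by a single copy of $\R$, whereas $C_{\psi_1}(M_1)\ovt C_{\psi_2}(M_2)$ is a crossed product by $\R^2$, so the semidiscreteness statements you would obtain do not tensor into the one you need. In addition, the assertion that external tensor products of von Neumann bimodules preserve weak containment is itself not free in this setting: weak containment must be verified on $(M_1\ovt M_2)\ota (A_1\ovt A_2)^{\op}$, whose elements are not finite sums of elementary tensors, so one needs a density/normality argument of exactly the kind that constitutes the real content of the statement. Asserting it, and the Fubini rearrangement, without proof leaves the main difficulty unaddressed.

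You also dismiss the approach that actually works. The paper proves the ``only if'' direction by approximating each conditional expectation $p_i\langle M_i,B_i\rangle p_i\to A_i$ by \emph{normal} ccp maps $\varphi_{\lambda_i}$ (the Proposition preceding the Corollary, via Corollary \ref{approximation of expectation}), and the subtlety you raise --- that a point $\sigma$-weak cluster point of $\varphi_{\lambda_1}\otimes\varphi_{\lambda_2}$ might agree with $E_{A_1}\otimes E_{A_2}$ only on the algebraic tensor product --- is resolved by first passing to convex combinations so that the predual maps satisfy $\|\varphi_{\lambda_i}^*(\omega)-(E_{A_i})^*(\omega)\|\to 0$ for every $\omega\in (p_iM_ip_i)_*$. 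Since the maps are uniformly bounded and elementary tensors of normal functionals are total in $(p_1M_1p_1\ovt p_2M_2p_2)_*$, this norm convergence forces the cluster point to restrict to $E_{A_1}\otimes E_{A_2}$ on the entire spatial tensor product. You should adopt this argument (or supply complete proofs of the semifiniteness reduction and of the tensor-stability of weak containment); as written, the ``only if'' half of your proof is not established.
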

\begin{proof}
	We first assume $(A_i,E_{A_i})\lessdot_{M_i} B_i$ for $i=1,2$. By the previous proposition, for each $i$, take a net of normal ccp maps $(\varphi_{\lambda_i})_{\lambda_i}$ from $p_i\langle M_i,B_i\rangle p_i$ to $A_i$ which converges to a conditional expectation whose restriction is $E_{A_i}$ on $M_i$. 
As normal ccp maps on $p_iM_ip_i$, consider duals $\varphi_{\lambda_i}^*\colon (p_iM_ip_i)_* \to (p_iM_ip_i)_*$ and then, up to convex combinations, we may assume that $\|\varphi_{\lambda_i}^*(\omega)- (E_{A_i})^*(\omega)\|\to 0$ for all $\omega \in (p_iM_ip_i)_*$. 
Since each $\varphi_{\lambda_i}$ is normal, we can define a net of normal ccp maps $\varphi_{\lambda_1}\otimes \varphi_{\lambda_2}$ from $p_1\langle M_1,B_1\rangle p_1\ovt p_2\langle M_2,B_2\rangle p_2$ to $A_1\ovt A_2$. Let $\Phi$ be a cluster point of $\varphi_{\lambda_1}\otimes \varphi_{\lambda_2}$ in  the point $\sigma$-weak topology. 
Then an easy computation, together with the above convergence condition on $(p_iM_ip_i)_*$, implies that $\Phi|_{p_1M_1p_1\ovt p_2M_2p_2}=E_{A_1}\otimes E_{A_2}$. Hence $\Phi$ is a  conditional expectation onto $A_1\ovt A_2$ which restricts to $E_{A_1}\otimes E_{A_2}$. Finally using the identification 
	$$p_1\langle M_1,B_1\rangle p_1\ovt p_2\langle M_2,B_2\rangle p_2 = (p_1\otimes p_2 )\langle M_1 \ovt M_2 ,B_1\ovt B_2\rangle (p_1\otimes p_2 ),$$
we get the conclusion.

Conversely assume $(A_1\ovt A_2,E_{A_1}\otimes E_{A_2})\lessdot_{M_1\ovt M_2} B_1\ovt B_2$. Then using the identification above and by restriction, we have a ucp map 
	$$ \Phi \colon p_1\langle M_1,B_1\rangle p_1\otimes \C p_2\to A_1\ovt A_2$$
which restricts to $E_{A_1}\otimes E_{A_2}$ on $p_1Mp_1 \otimes \C p_2 $. Let $\varphi$ be any normal state on $A_2$ and consider a ucp map $E:=(\id_{A_1}\otimes \varphi)\circ \Phi$. Using identifications $p_1\langle M_1,B_1\rangle p_1\otimes \C p_2 = p_1\langle M_1,B_1\rangle p_1$ and $A_1\otimes \C p_2 = A_1$, $E$ is a conditional expectation from $p_1\langle M_1,B_1\rangle p_1$ onto $A_1$ which restricts to $E_1$ on $p_1M_1p_1$. 
We get $(A_1,E_{A_1})\lessdot_{M_1} B_1$ and the same argument works for $(A_2,E_{A_2})\lessdot_{M_2} B_2$.
\end{proof}

\subsection*{\bf Some lemmas for tensor product factors}

	We next prove two lemmas for tensor product factors. They are indeed key lemmas for the proof of the main theorem. We will use condition (2) of Theorem \ref{injectivity by core for relative amenable}.

	Let $X\subset \N$ and let $M_n$ be von Neumann algebras with faithful normal states $\varphi_n$ for $n\in X$. Put $(M,\varphi):=\ovt_{n\in X}(M_n,\varphi_n)$. 
For any subset $\mathcal{F}\subset X$, we write 
	$$M_{\mathcal{F}}:=\ovt_{n\in \mathcal{F}} M_n \subset M, \quad M_{\mathcal{F}}^c:=\ovt_{n\in X\setminus \mathcal{F}} M_n \subset M .$$
Observe that $M=M_{\mathcal{F}}\ovt M_{\mathcal{F}}^c$ for any $\mathcal{F} \subset X$. 
Let $p\in M$ be a projection and $P\subset pMp$ a von Neumann subalgebra with expectation $E_P$. Let $\psi$ be a faithful normal state on $M$ such that $\psi\circ E_P = \psi$ on $pMp$ and $p\in M_\psi$. Put $\widetilde{P}:=\Pi_{\varphi,\psi}(C_\psi(P))$, $\widetilde{M}:=C_\varphi(M)$ and $\widetilde{M}_{\mathcal{F}}:=C_\varphi(M_{\mathcal{F}})$. We write as $\Tr$ the canonical semifinite trace on $\widetilde{M}$. 

	The first lemma is a variant of \cite[Proposition 2.7]{PV11}. Since their proof does not work for non-finite von Neumann algebras, we prove it with a different way under a much stronger assumption.

\begin{Lem}\label{intersection lemma}
	Keep the notation and assume $X=\{1,2,3\}$. If $(P,E_P)$ is injective relative to both $Q_1:=M_1\ovt \C \ovt M_3$ and $Q_2:=\C\ovt M_2\ovt M_3$ in $M$, then $(P,E_P)$ is injective relative to $Q_1\cap Q_2 = \C \ovt \C \ovt M_3$ in $M$.
\end{Lem}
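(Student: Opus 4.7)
The plan is to transfer the problem to the continuous core via Theorem~\ref{injectivity by core for relative amenable} and then exploit the commuting square structure inherited from the tensor product in the resulting semifinite setting. By $(1)\Leftrightarrow(3)$ of that theorem, the two hypotheses become $\widetilde{P}\lessdot_{\widetilde{M}} \widetilde{M}_{\{1,3\}}$ and $\widetilde{P}\lessdot_{\widetilde{M}} \widetilde{M}_{\{2,3\}}$ inside the semifinite algebra $\widetilde{M}=C_\varphi(M)$ with canonical trace $\Tr$, and it suffices to derive $\widetilde{P}\lessdot_{\widetilde{M}} \widetilde{M}_{\{3\}}$.

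The key structural observation is that, since $\sigma^\varphi=\sigma^{\varphi_1}\otimes\sigma^{\varphi_2}\otimes\sigma^{\varphi_3}$ acts diagonally and each $M_i$ is globally $\sigma^\varphi$-invariant, each $\widetilde{M}_{\mathcal{F}}$ is just the $\R$-crossed product of $M_{\mathcal{F}}$ by the restricted action. One sees directly that $\widetilde{M}_{\{1,3\}}\cap\widetilde{M}_{\{2,3\}}=\widetilde{M}_{\{3\}}$, and that the canonical operator-valued weights from $\widetilde{M}$ onto $\widetilde{M}_{\{1,3\}}$ and $\widetilde{M}_{\{2,3\}}$ (induced from $E_{M_1\ovt M_3}$ and $E_{M_2\ovt M_3}$) compose to yield the one onto $\widetilde{M}_{\{3\}}$. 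In particular, inside $\langle \widetilde{M},\widetilde{M}_{\{3\}}\rangle$ the Jones projections $e_{\widetilde{M}_{\{1,3\}}}$ and $e_{\widetilde{M}_{\{2,3\}}}$ commute, with product $e_{\widetilde{M}_{\{3\}}}$.

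In this semifinite commuting-square setting I would run the PV-style argument of \cite{PV11}: using Theorem~\ref{injectivity by core for relative amenable}(4), cut down by a finite-trace projection $q\in\widetilde{M}$ with $q\geq p$ to obtain, for each $i\in\{1,2\}$, a net $(\xi^{(i)}_n)_n$ in $L^2(\langle q\widetilde{M} q,q\widetilde{M}_{\{i,3\}}q\rangle,\widehat{\Tr})$ that is asymptotically $\widetilde{P}$-central and recovers $\Tr$ on $q\widetilde{M} q$. The commutation of Jones projections allows the product $\xi_n:=\xi^{(1)}_n\xi^{(2)}_n$ to be formed inside $L^2(\langle q\widetilde{M} q,q\widetilde{M}_{\{3\}}q\rangle)$; the commuting-square identity ensures the correct trace recovery, while asymptotic $\widetilde{P}$-centrality of each factor combines into asymptotic $\widetilde{P}$-centrality of the product. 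This yields $\widetilde{P}\lessdot_{\widetilde{M}} \widetilde{M}_{\{3\}}$, and $(3)\Rightarrow(1)$ of Theorem~\ref{injectivity by core for relative amenable} completes the proof.

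The hardest part will be this last step: \cite{PV11} is written in the tracial finite-von-Neumann-algebra setting, and in our semifinite $\widetilde{M}$ one must choose a finite-trace cutdown $q$ compatible with the three subalgebras simultaneously and with the ucp maps coming from Theorem~\ref{injectivity by core for relative amenable}(4), then verify that $\xi_n^{(1)}\xi_n^{(2)}$ retains the asymptotic $\widetilde{P}$-centrality and $\Tr$-recovery required to implement relative injectivity. The restriction to three tensor factors, leaving only a single commuting square, is precisely what makes this adaptation tractable under the stronger hypothesis here.
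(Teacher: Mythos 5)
Your reduction to the continuous core via Theorem \ref{injectivity by core for relative amenable} and your identification of the commuting-square structure $E_{M_1\ovt M_3}\circ E_{M_2\ovt M_3}=E_{M_3}$ match the paper's starting point, but the step where you combine the two relative amenabilities is a genuine gap. The vectors $\xi_n^{(1)}$ and $\xi_n^{(2)}$ live in the $L^2$-spaces of two \emph{different} basic constructions, each equipped with its own canonical trace; even after embedding both $\langle \widetilde{M},\widetilde{M}_{\{i,3\}}\rangle$ into $\langle \widetilde{M},\widetilde{M}_{\{3\}}\rangle$, those traces are not the restriction of the canonical trace of the larger algebra (the Jones projections $e_{\widetilde{M}_{\{i,3\}}}$ have infinite trace there, since $M_i$ is diffuse), so there is no inclusion of $L^2$-spaces and the ``product'' $\xi_n^{(1)}\xi_n^{(2)}$ has no meaning as an element of $L^2(\langle q\widetilde{M}q,q\widetilde{M}_{\{3\}}q\rangle)$. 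Even formally, a product of two $L^2$-elements is only $L^1$, and the quantities $\langle x\,\xi_n^{(1)}\xi_n^{(2)},\xi_n^{(1)}\xi_n^{(2)}\rangle$ involve fourth-order expressions for which neither trace recovery nor asymptotic $\widetilde{P}$-centrality follows from the corresponding properties of the factors. This is precisely the obstruction the author flags when writing that the proof of \cite[Proposition 2.7]{PV11} ``does not work for non-finite von Neumann algebras''; the finite-trace cutdown $q$ does not rescue it because the basic constructions $q\langle\widetilde{M},\widetilde{M}_{\{i,3\}}\rangle q$ remain properly infinite.

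The mechanism that actually works, and the one the paper uses, is composition of weak containments of bimodules rather than multiplication of almost-central vectors. After passing to the core one has ${}_{\widetilde{M}}L^2(\widetilde{M}p)_{\widetilde{P}} \prec {}_{\widetilde{M}}L^2(\widetilde{M})\otimes_{Q_i}L^2(\widetilde{M}p)_{\widetilde{P}}$ for $i=1,2$, where crucially $Q_1=M_1\ovt\C\ovt M_3$ and $Q_2=\C\ovt M_2\ovt M_3$ are kept as the \emph{original} subalgebras of $M\subset\widetilde{M}$ rather than replaced by their cores. One then tensors the $Q_2$-containment on the left by $L^2(\langle\widetilde{M},Q_1\rangle)\otimes_{\widetilde{M}}-$ (using \cite[Lemma 1.7]{AD93}), uses the $Q_1$-containment to absorb the left factor back into $L^2(\widetilde{M}p)$, and finally identifies $L^2(\langle\widetilde{M},Q_1\rangle)\otimes_{\widetilde{M}}L^2(\langle\widetilde{M},Q_2\rangle)$ as a multiple of $L^2(\widetilde{M})\otimes_{M_3}L^2(\widetilde{M})$ by repeatedly decomposing these bimodules into concrete Hilbert-space tensor products via \cite[Proposition 2.3]{Is16a}. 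That identification is where the tensor-product (commuting-square) structure genuinely enters, and it replaces the vector manipulation you propose. If you want to salvage your write-up, replace the product construction by this relative-tensor-product composition; the rest of your outline (reduction to the core, reduction back at the end) is sound.
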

\begin{proof}
	As in the proof of Theorem \ref{injectivity by core for relative amenable}, we may assume the central support of $p$ in $M$ is 1. Then by Theorem \ref{injectivity by core for relative amenable}(2), \ref{relative amenable theorem2} and Lemma \ref{lemma for reduction}, our  assumption is equivalent to 
	$${}_{\widetilde{M}}L^2(\widetilde{M}p)_{\widetilde{P}} \prec {}_{\widetilde{M}}L^2(\widetilde{M})\otimes_{Q_i} L^2(\widetilde{M}p)_{\widetilde{P}} = {}_{\widetilde{M}} L^2(\langle \widetilde{M},Q_i\rangle p)_{\widetilde{P}}$$
for $i=1,2$. 
Using \cite[Lemma 1.7]{AD93}, we apply ${}_{\widetilde{M}} L^2(\langle \widetilde{M},Q_1\rangle )\otimes_{\widetilde{M}}$ from the left side and get that 
	$${}_{\widetilde{M}} L^2(\langle \widetilde{M},Q_1\rangle )\otimes_{\widetilde{M}} L^2(\widetilde{M}p)_{\widetilde{P}} \prec {}_{\widetilde{M}} L^2(\langle \widetilde{M},Q_1\rangle )\otimes_{\widetilde{M}} L^2(\langle \widetilde{M},Q_2\rangle p)_{\widetilde{P}}.$$
Observe that, as $\widetilde{M}$-${\widetilde{P}}$-bimodules, the left hand side satisfies
\begin{eqnarray*}
	&&{}_{\widetilde{M}} L^2(\langle \widetilde{M},Q_1\rangle )\otimes_{\widetilde{M}} L^2(\widetilde{M}p)_{\widetilde{P}} \\
	&\simeq&{}_{\widetilde{M}} \left(L^2(\widetilde{M})\otimes_{Q_1}L^2(\widetilde{M})\right)\otimes_{\widetilde{M}} L^2(\widetilde{M}p)_{\widetilde{P}} \\
	&\simeq&{}_{\widetilde{M}} L^2(\widetilde{M})\otimes_{Q_1}\left(L^2(\widetilde{M})\otimes_{\widetilde{M}} L^2(\widetilde{M}p)\right)_{\widetilde{P}} \\
	&\simeq&{}_{\widetilde{M}} L^2(\widetilde{M})\otimes_{Q_1} L^2(\widetilde{M}p)_{\widetilde{P}} \\
	&\succ& {}_{\widetilde{M}} L^2(\widetilde{M}p)_{\widetilde{P}}.
\end{eqnarray*}
Hence we obtain 
	$${}_{\widetilde{M}} L^2(\widetilde{M}p)_{\widetilde{P}} \prec {}_{\widetilde{M}} L^2(\langle \widetilde{M},Q_1\rangle )\otimes_{\widetilde{M}} L^2(\langle \widetilde{M},Q_2\rangle p)_{\widetilde{P}}.$$
Next we claim that the right hand side is actually a multiple of $L^2(\widetilde{M})\otimes_{M_3}  L^2(\widetilde{M})$. Indeed, by \cite[Proposition 2.3]{Is16a}, as $\widetilde{M}$-bimodules, we have
\begin{eqnarray*}
	&&{}_{\widetilde{M}} L^2(\langle \widetilde{M},Q_1\rangle )\otimes_{\widetilde{M}} L^2(\langle \widetilde{M},Q_2\rangle)_{\widetilde{M}} \\
	&\simeq& {}_{\widetilde{M}} \left( L^2(\widetilde{M})\otimes_{Q_1}L^2(\widetilde{M})\right)\otimes_{\widetilde{M}} \left( L^2(\widetilde{M})\otimes_{Q_2}L^2(\widetilde{M})\right)_{\widetilde{M}} \\
	&\simeq& {}_{\widetilde{M}} L^2(\widetilde{M})\otimes_{Q_1}\left[ L^2(\widetilde{M})\otimes_{\widetilde{M}} \left( L^2(\widetilde{M})\otimes_{Q_2}L^2(\widetilde{M})\right)\right]_{\widetilde{M}} \\
	&\simeq& {}_{\widetilde{M}} L^2(\widetilde{M})\otimes_{Q_1}\left[ \left(L^2(\widetilde{M})\otimes_{\widetilde{M}}  L^2(\widetilde{M})\right)\otimes_{Q_2}L^2(\widetilde{M})\right]_{\widetilde{M}} \\
	&\simeq& {}_{\widetilde{M}} L^2(\widetilde{M})\otimes_{Q_1}\left[ L^2(\widetilde{M})\otimes_{Q_2}L^2(\widetilde{M})\right]_{\widetilde{M}} \\
	&\simeq& {}_{\widetilde{M}} L^2(\widetilde{M})\otimes_{Q_1}\left[L^2(\R)\otimes L^2(M_1)\otimes L^2(Q_2)\otimes L^2(M_1)\otimes L^2(\R)\right]_{\widetilde{M}} .
\end{eqnarray*}
In the final line, we have a copy of 
	$$L^2(\langle \widetilde{M},Q_1\rangle)=L^2(\widetilde{M})\otimes_{Q_1}L^2(\R)\otimes L^2(M_1)\otimes L^2(Q_2).$$
We again apply \cite[Proposition 2.3]{Is16a} to this part and then the above bimodule is isomorphic to 
\begin{eqnarray*}
	& & {}_{\widetilde{M}} L^2(\R)\otimes L^2(M_2)\otimes L^2(Q_1)\otimes L^2(\R)\otimes L^2(M_2)\otimes L^2(M_1)\otimes L^2(\R)_{\widetilde{M}} \\
	&=& {}_{\widetilde{M}} L^2_\ell(\R)\otimes L^2_\ell(M_1)\otimes L^2_\ell(M_2)\otimes L^2_{\ell,r}(M_3)\otimes L^2(\R)\otimes L^2_r(M_2)\otimes L^2_r(M_1)\otimes L^2_r(\R)_{\widetilde{M}}. 
\end{eqnarray*}
Here we are using symbols $\ell$ and $r$ at the bottom of Hilbert spaces, which means the given left (resp.\ right) action acts on Hilbert spaces with the symbol $\ell$ (resp.\ $r$). Note that there is no actions on $L^2(\R)$, so we can ignore this part. 
We finally apply again \cite[Proposition 2.3]{Is16a} to this bimodule and then it is isomorphic to 
	$$ \bigoplus{}_{\widetilde{M}} L^2(\widetilde{M})\otimes_{M_3}  L^2(\widetilde{M})_{\widetilde{M}} ,$$
where $\bigoplus$ comes from the above $L^2(\R)$ on which there is no actions. 
Thus the claim is proven and we obtain 
	$${}_{\widetilde{M}} L^2(\widetilde{M}p)_{\widetilde{P}} \prec {}_{\widetilde{M}} L^2(\widetilde{M})\otimes_{M_3} L^2(\widetilde{M} p)_{\widetilde{P}}.$$
This exactly means $\widetilde{P}$ is semidiscrete relative to $M_3$. By Theorem \ref{injectivity by core for relative amenable}, this is equivalent to the conclusion.
\end{proof}

	The next lemma will be used to solve a problem that arises from infiniteness of tensor product components.

\begin{Lem}\label{infinite tensor lemma finite case}
	Assume that $X=\N$. If $(P,E_P)$ is injective relative to $M_{\mathcal{F}}^c$ for all {\rm finite} subsets $\mathcal{F}\subset \N \setminus \{1\}$, then $(P,E_P)$ is injective relative to $M_1$.
\end{Lem}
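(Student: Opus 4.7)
The approach is to translate the hypothesis via Theorem~\ref{injectivity by core for relative amenable}(4) into a family of ucp maps, and then to take a cluster point. Fix an increasing exhaustion $\N\setminus\{1\}=\bigcup_k\mathcal{F}_k$ by finite sets, and set $N_k:=M_{\mathcal{F}_k}^c$. Then $(N_k)_k$ is a decreasing sequence of von Neumann subalgebras of $M$ with $\bigcap_k N_k=M_1$, and applying $J\cdot J$ to both sides and taking commutants yields
\[
\langle M,M_1\rangle=(JM_1J)'=\Bigl(\bigcap_k JN_kJ\Bigr)'=\Bigl(\bigcup_k\langle M,N_k\rangle\Bigr)''.
\]
In particular, $\bigcup_k p\langle M,N_k\rangle p$ is ultraweakly dense in $p\langle M,M_1\rangle p$.

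By Theorem~\ref{injectivity by core for relative amenable}(4) applied to each hypothesis $(P,E_P)\lessdot_M N_k$, there is a ucp map $\Psi_k\colon p\langle M,N_k\rangle p\to\langle pMp,P\rangle$ which restricts to $\id$ on $pMp$. Realizing $\langle pMp,P\rangle$ spatially as a von Neumann subalgebra of $\B(pL^2(pMp))$ and invoking Arveson's extension theorem, I extend each $\Psi_k$ to a ucp map $\tilde\Psi_k\colon p\langle M,M_1\rangle p\to\B(pL^2(pMp))$, and then pass to a point-$\sigma$-weak cluster point $\Psi$ of $(\tilde\Psi_k)_k$. Then $\Psi|_{pMp}=\id$, and for every $x\in p\langle M,N_\ell\rangle p$ (for any $\ell$) the value $\Psi(x)$ lies in the ultraweakly closed algebra $\langle pMp,P\rangle$, since $\tilde\Psi_k(x)=\Psi_k(x)\in\langle pMp,P\rangle$ for all $k\geq\ell$.

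The remaining and most delicate task is to show that $\Psi$ sends \emph{all} of $p\langle M,M_1\rangle p$ into $\langle pMp,P\rangle$, so that it realizes condition~$(4)$ of Theorem~\ref{injectivity by core for relative amenable}. The multiplicative domain of $\Psi$ contains $pMp$, so $\Psi$ is a $pMp$-bimodule map; together with the Jones pull-out $e_{M_1}\,a\,e_{M_1}=E_{M_1}(a)\,e_{M_1}$ (where $E_{M_1}\colon M\to M_1$ is the canonical conditional expectation), this reduces the question to the single element: is $\Psi(p\,e_{M_1}p)\in\langle pMp,P\rangle$? The Jones projections satisfy $p\,e_{N_k}p\searrow p\,e_{M_1}p$ strongly, and each $\Psi(p\,e_{N_k}p)\in\langle pMp,P\rangle$, so the decreasing positive sequence $(\Psi(p\,e_{N_k}p))_k$ converges strongly to some $Y\in\langle pMp,P\rangle$; the crux is to identify $Y$ with $\Psi(p\,e_{M_1}p)$.

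I expect this identification to be the main obstacle, since a generic ucp cluster point need not be normal. To secure it, I would construct the Arveson extensions $\tilde\Psi_k$ from the normal ccp approximations provided by the Proposition preceding Corollary~\ref{amenability for tensor product}, and organize the cluster-point passage so that normality survives precisely along the monotone sequence $(p\,e_{N_k}p)_k$. Alternatively, Theorem~\ref{injectivity by core for relative amenable}(2) permits transporting the entire argument to the continuous core $C_\psi(M)$, where semifiniteness enables a Hilbert-bimodule argument extending Lemma~\ref{intersection lemma} from a two-term to a countable intersection, bypassing the normality issue altogether.
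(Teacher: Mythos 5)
Your proposal correctly identifies the right starting point (Theorem \ref{injectivity by core for relative amenable}(4) applied to each $N_k:=M_{\mathcal{F}_k}^c$, ultraweak density of $\bigcup_k p\langle M,N_k\rangle p$ in $p\langle M,M_1\rangle p$), but it stops exactly at the step that constitutes the entire difficulty of the lemma, and neither of your two proposed remedies closes it. A point-$\sigma$-weak cluster point $\Psi$ of Arveson extensions is a priori non-normal, so knowing $\Psi\bigl(\bigcup_k p\langle M,N_k\rangle p\bigr)\subset\langle pMp,P\rangle$ says nothing about the ultraweak closure; moreover the reduction to the single element $pe_{M_1}p$ is itself only valid on the $\ast$-algebra spanned by $ae_{M_1}b$, which is again merely ultraweakly dense, so the same obstruction reappears. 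Remedy (a) cannot be carried out as stated: even if each $\Psi_k$ is approximated by normal ccp maps, there is no mechanism that forces a cluster point to be normal (or even strongly lower semicontinuous) along the prescribed monotone sequence $(pe_{N_k}p)_k$, and identifying $\lim_k\Psi(pe_{N_k}p)$ with $\Psi(pe_{M_1}p)$ is precisely a normality statement. Remedy (b) gestures toward the core but mischaracterizes what is needed: the paper does not extend Lemma \ref{intersection lemma} to countable intersections.

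What the paper actually does is to change the functional-analytic framework so that the passage from the ``finite part'' to the whole algebra happens at the level of a positive linear functional and operator norms rather than at the level of a ucp map and ultraweak limits. Working in the core $\widetilde M=C_\varphi(M)$, one shows for each finite $\mathcal F$ that $L^2(\widetilde M)\otimes_{M_{\mathcal F}^c}L^2(\widetilde M)$, viewed as a $\widetilde M_{\mathcal F\cup\{1\}}$-$\widetilde M$-module, is a multiple of $L^2(\widetilde M)\otimes_{M_1}L^2(\widetilde M)$; this yields the weak containment ${}_{\widetilde M_{\rm fin}}L^2(\widetilde Mp)_P\prec{}_{\widetilde M_{\rm fin}}L^2(\widetilde M)\otimes_{M_1}L^2(\widetilde Mp)_P$ over the C$^*$-algebra $\widetilde M_{\rm fin}=\overline{\bigcup_{\mathcal F}\widetilde M_{\mathcal F}}^{\,\|\cdot\|}$. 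The associated positive functional $\Omega(a\otimes_{M_1}b^{\rm op})=\Tr(ab)$ is then bounded on $\ast\text{-alg}\{\pi(\widetilde M_{\rm fin}),\theta(P^{\rm op})\}$, and the decisive estimate
$$\Bigl|\sum_i\Tr(a_ib_i)\Bigr|=\lim_{\mathcal F}\Bigl|\Omega\Bigl(\sum_iE_{\mathcal F}(a_i)\otimes b_i^{\rm op}\Bigr)\Bigr|\le C\,\Bigl\|\sum_ia_i\otimes b_i^{\rm op}\Bigr\|_\infty$$
extends the bound to all of $\ast\text{-alg}\{\pi(\widetilde M),\theta(P^{\rm op})\}$, using that $E_{\mathcal F}\to\id$ pointwise strongly and that the slice maps $E_{\mathcal F}\otimes\id_{P^{\rm op}}$ are operator-norm contractive. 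Lemma \ref{GNS lemma} then converts this boundedness back into the desired semidiscreteness. It is this norm-level extension argument --- not any refinement of the cluster-point construction --- that circumvents the normality problem you ran into, and without some substitute for it your proof is incomplete.
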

\begin{proof}
		As before, we may assume the central support of $p$ in $M$ is 1. Then by Theorem \ref{injectivity by core for relative amenable}(2), \ref{relative amenable theorem2} and Lemma \ref{lemma for reduction}, our assumption is equivalent to that $\widetilde{P}$ is semidiscrete relative to $M_{\mathcal{F}}^c$ in $\widetilde{M}$ for all finite subsets $\mathcal{F}\subset \N \setminus \{1\}$. We will show that $\widetilde{P}$ is semidiscrete relative to $M_1$ in $\widetilde{M}$, that is equivalent to the conclusion by Theorem \ref{injectivity by core for relative amenable}. 

	To see this, we have only to show that $r\widetilde{P}r$ is semidiscrete relative to $M_1$ for all $\Tr$-finite projections $r\in \widetilde{P}$. 
So we will indeed prove the following more general statement: let $p\in \widetilde{M}$ be a projection with $\Tr(p)<\infty$ and $P\subset p\widetilde{M}p$ be a von Neumann subalgebra. If $P$ is semidiscrete relative to $M_{\mathcal{F}}^c$ for all {\rm finite} subsets $\mathcal{F}\subset \N\setminus \{1\}$, then $P$ is semidiscrete relative to $M_1$.

	Fix a finite subset $\mathcal{F}\subset \N\setminus \{1\}$. By assumption we have a weak containment 
	$${}_{\widetilde{M}}L^2(\widetilde{M}p)_P \prec {}_{\widetilde{M}}L^2(\widetilde{M})\otimes_{M_{\mathcal{F}}^c} L^2(\widetilde{M}p)_P,$$
where we omit the support projection of $p$, as explained before. 
Here we claim that $L^2(\widetilde{M})\otimes_{M_{\mathcal{F}}^c} L^2(\widetilde{M})$ is, as a $\widetilde{M}_{\mathcal{F}\cup\{1\}}$-$\widetilde{M}$-module, a multiple of $L^2(\widetilde{M})\otimes_{M_1} L^2(\widetilde{M})$, 
so that we indeed obtain 
	$${}_{\widetilde{M}_{\mathcal{F}\cup\{1\}}}L^2(\widetilde{M}p)_P \prec {}_{\widetilde{M}_{\mathcal{F}\cup\{1\}}}L^2(\widetilde{M})\otimes_{M_1} L^2(\widetilde{M}p)_P.$$

	We prove the claim. Since $M=M_{\mathcal{F}}\ovt M_{\mathcal{F}}^c$, by \cite[Proposition 2.3]{Is16a}, we have a canonical $\widetilde{M}$-bimodule isomorphism 
\begin{align*}
	L^2(\widetilde{M})\otimes_{M_{\mathcal{F}}^c} L^2(\widetilde{M}) 
	=&\ L^2(\R)\otimes L^2(M_\mathcal{F}) \otimes L^2(M_{\mathcal{F}}^c)\otimes L^2(M_{\mathcal{F}})\otimes L^2(\R)\\
	=&\  L^2(\R)\otimes L^2(M_\mathcal{F}) \otimes L^2(M_1)\otimes L^2(M_{\mathcal{F}\cup\{1\}}^c)\otimes L^2(M_{\mathcal{F}})\otimes L^2(\R).
\end{align*}
If we write as $L^2_\ell(\R)\otimes H_1\otimes H_2 \otimes H_3\otimes H_4\otimes L^2_r(\R)$ the Hilbert space in the final line, then the left $\widetilde{M}_{\mathcal{F}\cup \{1\}}$-action is the one on $L^2_\ell(\R) \otimes H_1\otimes H_2$ and the right-one is on $H_2\otimes H_3\otimes H_4\otimes  L^2_r(\R)$. We also consider by \cite[Proposition 2.3]{Is16a}
\begin{align*}
	&L^2(\widetilde{M})\otimes_{M_1} L^2(\widetilde{M}) \\
	=& \ L^2(\R)\otimes L^2(M_\mathcal{F}) \otimes L^2(M_{\mathcal{F}\cup\{1\}}^c) \otimes L^2(M_1)\otimes L^2(M_{\mathcal{F}\cup\{1\}}^c)\otimes L^2(M_{\mathcal{F}})\otimes L^2(\R)
\end{align*}
and observe that the difference of $L^2(\widetilde{M})\otimes_{M_{\mathcal{F}^c}} L^2(\widetilde{M})$ and $L^2(\widetilde{M})\otimes_{M_1} L^2(\widetilde{M})$ is only the component $L^2(M_{\mathcal{F}\cup \{1\}}^c)$, on which there is no left-right actions as $\widetilde{M}_{\mathcal{F}\cup\{1\}}$-$\widetilde{M}$-module. Thus we obtain the desired result, and the claim is proven.

	Since the resulting weak containment holds for all finite subsets $\mathcal{F}\subset \N\setminus \{1\}$, if we put 
	$$\widetilde{M}_{\rm fin}:= \text{the norm closure of }\bigcup_{\mathcal{F}\subset \N, \text{ finite}}\widetilde{M}_{\mathcal{F}} \subset \widetilde{M},$$
which is a C$^*$-algebra, then we have 
	$${}_{\widetilde{M}_{\rm fin}}L^2(\widetilde{M}p)_P \prec {}_{\widetilde{M}_{\rm fin}}L^2(\widetilde{M})\otimes_{M_1} L^2(\widetilde{M}p)_P.$$
Let $\pi$ denote the left $\widetilde{M}$-action and $\theta$ the right $\widetilde{P}$-action on ${}_{\widetilde{M}}L^2(\widetilde{M})\otimes_{M_1} L^2(\widetilde{M}p)_P$. 
Let $\nu$ be the algebraic $\ast$-homomorphism corresponding to the above weak containment. We define an algebraic positive linear functional 
	$$\Omega \colon \text{$\ast$-alg}\{ \pi(\widetilde{M}), \theta(P^{\rm op})\} \to \C ; \ \Omega(a\otimes_{M_1} b^{\rm op}):= \Tr(ab), \quad a\in \widetilde{M}, \ b\in P.$$
This is indeed a positive linear functional, since it is a composition of $\nu$ and the vector state by $p\in L^2(\widetilde{M})$. We know that $\Omega$ is bounded on $\ast$-alg$\{ \pi(\widetilde{M}_{\rm fin}), \theta(P^{\rm op})\}$. 
We claim that $\Omega$ is bounded on the whole domain. By Lemma \ref{GNS lemma} this is equivalent to that $P$ is semidiscrete relative to $M_1$ in $\widetilde{M}$, which is our conclusion.

	We prove the claim. For any subset $\mathcal{F}\subset \N$, let $E_\mathcal{F}$ denotes the canonical conditional expectation from $\widetilde{M}$ onto $\widetilde{M}_{\mathcal{F}}$. Observe that $\id_{\widetilde{M}}=\lim_{\mathcal{F}} E_{\mathcal{F}}$ in the point strong topology, where the limit is taken over all \textit{finite} subsets of $\N$. 
Using the fact $E_{\mathcal{F}}(\widetilde{M}) \subset \widetilde{M}_{\rm fin}$ and writing as $C>0$ the bound of $\Omega$ on the dense domain, we compute that for any $a_i\in \widetilde{M}$, $b_i\in P$,
\begin{eqnarray*}
	\left|\sum_{i=1}^n\Tr( a_ib_i)\right|
	&=& \lim_{\mathcal{F}}\left|\sum_{i=1}^n\Tr( E_{\mathcal{F}}(a_i)b_i)\right| \\
	&=& \lim_{\mathcal{F}}\left|\Omega\left(\sum_{i=1}^n E_{\mathcal{F}}(a_i)\otimes b_i^{\rm op}\right)\right| \\
	&\leq& \lim_{\mathcal{F}}C \cdot \left\|\sum_{i=1}^n E_{\mathcal{F}}(a_i)\otimes b_i^{\rm op}\right\|_{\infty} \\
	&\leq& \lim_{\mathcal{F}}C \cdot \|E_{\mathcal{F}}\otimes \id_{P^{\rm op}}\|\left\|\sum_{i=1}^n a_i\otimes b_i^{\rm op}\right\|_{\infty} \\
	&=& C \cdot \left\|\sum_{i=1}^n a_i\otimes b_i^{\rm op}\right\|_{\infty}. 
\end{eqnarray*}
Thus we obtain the boundedness of the desired map.
\end{proof}

\section{\bf Factors in the class $\mathcal{P}$}\label{Factors in the class P}

\subsection*{Popa's intertwining techniques}

We recall Popa's intertwining techniques \cite{Po01,Po03}. We introduce the one formulated in \cite{HI15} for general $\sigma$-finite von Neumann algebras.

\begin{Def}\label{definition intertwining}\upshape
Let $M$ be any $\sigma$-finite von Neumann algebra, $1_A$ and $1_B$ any nonzero projections in $M$, $A\subset 1_AM1_A$ and $B\subset 1_BM1_B$ any von Neumann subalgebras with faithful normal conditional expectations $E_A : 1_A M 1_A \to A$ and $E_B : 1_B M 1_B \to B$ respectively.  

We will say that $A$ {\em embeds with expectation into} $B$ {\em inside} $M$ and write $A \preceq_M B$ if there exist projections $e \in A$ and $f \in B$, a nonzero partial isometry $v \in fMe$ and a unital normal $\ast$-homomorphism $\theta : eAe \to fBf$ such that the inclusion $\theta(eAe) \subset fBf$ is with expectation and $va =  \theta(a)v$ for all $a \in eAe$.
\end{Def}

	We prove some lemmas.

\begin{Lem}[{\cite[Lemma 4.9]{HI15}}]\label{relative commutant}
	Keep the notation as in the previous definition. If $A\preceq_M B$, then $B'\cap 1_BM1_B \preceq_M A'\cap 1_A M 1_A$.
\end{Lem}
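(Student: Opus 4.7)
My plan is to construct a witness for $B'\cap 1_BM1_B \preceq_M A'\cap 1_AM1_A$ directly from the data $(e,f,v,\theta)$ witnessing $A\preceq_M B$, by exploiting the adjoint symmetry of the intertwining: taking adjoints in $va=\theta(a)v$ gives $av^{*}=v^{*}\theta(a)$, so $v^{*}$ plays a dual role and should serve as the partial isometry in the commutant embedding.

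First, up to replacing $e$ by $ez$ where $z$ is the central projection in $eAe$ complementary to $\ker\theta$, I may assume $\theta$ is a normal $\ast$-isomorphism onto $\theta(eAe) \subset fBf$. Setting $p:=v^{*}v$ and $q:=vv^{*}$, the intertwining yields $p\in (eAe)'\cap eMe$ and $q\in \theta(eAe)'\cap fMf$, together with $vav^{*}=\theta(a)q=q\theta(a)$ for $a\in eAe$. I then consider the normal completely positive map $\pi\colon B'\cap 1_BM1_B\to eMe$ defined by $\pi(y):=v^{*}yv$. A direct computation using $va=\theta(a)v$ and the fact that $fyf$ commutes with $\theta(eAe)\subset fBf$ shows
\[ \pi(y)a=v^{*}y\theta(a)v=v^{*}\theta(a)yv=av^{*}yv=a\pi(y) \]
for all $a\in eAe$, so $\pi$ takes values in $(eAe)'\cap eMe$. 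The defect of multiplicativity is $\pi(y_1y_2)-\pi(y_1)\pi(y_2)=v^{*}y_1(1-q)y_2v$, which vanishes whenever $q$ commutes with $y_1,y_2$.

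From this data I would extract projections $e'\in B'\cap 1_BM1_B$ and $f'\in A'\cap 1_AM1_A$, a partial isometry $w:=v^{*}\in f'Me'$, and a unital normal $\ast$-homomorphism $\theta'\colon e'(B'\cap 1_BM1_B)e'\to f'(A'\cap 1_AM1_A)f'$ satisfying $wy=\theta'(y)w$, by choosing $e'$ so that $q$ sits centrally in $e'(B'\cap 1_BM1_B)e'$ (ensuring multiplicativity of $\pi$ on this corner), and analogously $f'$ built from $p$. The expectation of the image inclusion $\theta'(e'(B'\cap 1_BM1_B)e')\subset f'(A'\cap 1_AM1_A)f'$ is then inherited from the given expectation $\theta(eAe)\subset fBf$ through the canonical duality between Jones basic constructions of an inclusion and its relative commutant. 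The main obstacle is pinning down $e'$ and $f'$ in the \emph{true} relative commutants $B'\cap 1_BM1_B$ and $A'\cap 1_AM1_A$ rather than the generally larger corner commutants $(fBf)'\cap fMf$ and $(eAe)'\cap eMe$; I would address this by a preliminary reduction to the case where $e$ and $f$ have full central support in $A$ and $B$, obtained by summing translates of $v$ under partial isometries in $A$ and $B$ linking $e$ and $f$ to their central supports, so that these corner commutants coincide with the restrictions of the global ones and $e',f'$ can be chosen directly as projections in $B'\cap 1_BM1_B$, $A'\cap 1_AM1_A$.
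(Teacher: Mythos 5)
The paper itself offers no proof of this lemma: it is quoted as [HI15, Lemma 4.9], so your attempt can only be measured against the known argument for that result. Your computational groundwork is correct and matches the natural starting point: $p=v^*v\in(eAe)'\cap eMe$, $q=vv^*\in\theta(eAe)'\cap fMf$, the map $\pi(y)=v^*yv$ lands in $(eAe)'\cap eMe$, and the multiplicativity defect is $v^*y_1(f-q)y_2v$. The issue you single out as the ``main obstacle'' (corner commutants versus true relative commutants) is also the least serious one: the compression $y\mapsto ye$ maps $A'\cap 1_AM1_A$ \emph{onto} $(eAe)'\cap eMe$ by the standard averaging over partial isometries in $A$ linking $e$ to its central support, so this part is fine.

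The genuine gap is the step you dispose of in one clause: ``choosing $e'$ so that $q$ sits centrally in $e'(B'\cap 1_BM1_B)e'$.'' First, this is ill-posed: $q=vv^*$ lies in $\theta(eAe)'\cap fMf$, not in $B'\cap 1_BM1_B$, so it cannot be central in a corner of the latter; what you actually need is a projection $e'\in B'\cap 1_BM1_B$ with $v^*e'\neq 0$ such that $q$ (or the range projection of $e'v$ after polar decomposition) commutes with $e'(B'\cap 1_BM1_B)e'$. Second, no mechanism is offered for producing such an $e'$, and none exists in general: $\theta(eAe)'\cap fMf$ is typically much larger than $(B'\cap 1_BM1_B)f$, and replacing $v$ by the polar part of $e'v$ does not improve matters, since the new range projection still has no reason to commute with the corner. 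This is precisely where the content of the lemma sits: $\Ad(v^*)$ \emph{is} multiplicative on the corner $q\bigl(\theta(eAe)'\cap fMf\bigr)q$, so the entire difficulty is to move $(B'\cap 1_BM1_B)f$ into that corner inside $\theta(eAe)'\cap fMf$, and that requires a separate argument (in [HI15] this is not done by directly exhibiting the quadruple $(e',f',w,\theta')$ from $v^*$ alone). A secondary gap: the claim that the image inclusion $\theta'(e'(B'\cap 1_BM1_B)e')\subset f'(A'\cap 1_AM1_A)f'$ is with expectation ``by the canonical duality between Jones basic constructions'' is not an argument; in the $\sigma$-finite, possibly type III, setting of Definition \ref{definition intertwining} this is a substantive condition that must be verified, and it is one of the points where the general case genuinely differs from the tracial one.
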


\begin{Lem}\label{tensor lemma}
	Keep the notation as in the previous definition and let $N_0\subset N$ be any inclusion of $\sigma$-finite von Neumann algebras with expectation $E_{N_0}$. Then $A\preceq_M B$ if and only if $A\ovt N_0 \preceq_{M\ovt N} B\ovt N$.
\end{Lem}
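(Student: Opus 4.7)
The plan is to prove both implications separately.

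For the forward direction, given witnesses $(e, f, v, \theta)$ for $A \preceq_M B$ as in Definition \ref{definition intertwining}, I would form $\tilde{e} := e \otimes 1_{N_0}$, $\tilde{f} := f \otimes 1_N$, $\tilde{v} := v \otimes 1_{N_0}$, and $\tilde{\theta} := \theta \otimes \id_{N_0}$, whose image lies in $\theta(eAe) \ovt N_0 \subset fBf \ovt N = \tilde{f}(B \ovt N)\tilde{f}$. The intertwining relation $\tilde{v}\tilde{a} = \tilde{\theta}(\tilde{a})\tilde{v}$ follows on simple tensors from $va = \theta(a)v$ and extends by linearity and normality. The only point requiring a non-trivial check is that the inclusion $\theta(eAe) \ovt N_0 \subset fBf \ovt N$ is with expectation; for this I would compose $\id_{fBf} \otimes E_{N_0} \colon fBf \ovt N \to fBf \ovt N_0$ with the tensor $E \otimes \id_{N_0} \colon fBf \ovt N_0 \to \theta(eAe) \ovt N_0$, where $E$ denotes the expectation witnessing that the original inclusion $\theta(eAe) \subset fBf$ is with expectation.

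For the reverse direction, I would argue by contrapositive. Suppose $A \not\preceq_M B$. By the net characterization of Popa intertwining for $\sigma$-finite inclusions (see \cite{HI15}), for any fixed faithful normal state $\varphi$ on $B$ there exists a net $(a_\lambda)_\lambda$ in the unit ball of $A$ with $\|E_B(x a_\lambda y)\|_{2,\varphi} \to 0$ for all $x, y \in M$. I would fix a faithful normal state $\psi$ on $N$ and set $\tilde{a}_\lambda := a_\lambda \otimes 1_{N_0}$, a contractive net in $A \ovt N_0$. For $X = \sum_k x_k \otimes n_k$ and $Y = \sum_l y_l \otimes m_l$ in the algebraic tensor product $M \ota N$, the identity
$$E_{B \ovt N}(X \tilde{a}_\lambda Y) = \sum_{k,l} E_B(x_k a_\lambda y_l) \otimes n_k m_l,$$
together with the triangle inequality, gives $\|E_{B \ovt N}(X \tilde{a}_\lambda Y)\|_{2, \varphi \otimes \psi} \to 0$. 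Applying the net characterization in the opposite direction then produces $A \ovt N_0 \not\preceq_{M \ovt N} B \ovt N$, as required.

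The main obstacle I anticipate is the density extension in the reverse direction: one needs the convergence $\|E_{B \ovt N}(X \tilde{a}_\lambda Y)\|_{2, \varphi \otimes \psi} \to 0$ for arbitrary $X, Y \in M \ovt N$, not only on the algebraic tensor product. For this I would combine the uniform bound $\|\tilde{a}_\lambda\| \leq 1$ with Cauchy--Schwarz to approximate arbitrary $X, Y$ in $\|\cdot\|_{2, \varphi \otimes \psi}$-norm by elements of $M \ota N$, with an error controlled uniformly in $\lambda$. In the tracial case this is automatic; the care required here is to ensure the chosen faithful normal states interact correctly with the tensor product of the conditional expectations.
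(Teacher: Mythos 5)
Your forward direction is correct and coincides with the paper's: one tensors the witnesses $e,f,v,\theta$ of Definition \ref{definition intertwining} with identities, and your explicit check that $\theta(eAe)\ovt N_0\subset fBf\ovt N$ is with expectation, via $(E\otimes\id_{N_0})\circ(\id_{fBf}\otimes E_{N_0})$, is a detail the paper leaves implicit.

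The reverse direction is where the proposal has a genuine gap, and the obstacle you flag at the end is not a removable technicality. There are two problems. First, the very input you need --- that $A\not\preceq_M B$ produces a net $(a_\lambda)$ in the unit ball of $A$ with $\|E_B(xa_\lambda y)\|_{2,\varphi}\to 0$ for all $x,y\in M$ --- is the \emph{hard} direction of the analytic characterization of intertwining, and in the $\sigma$-finite non-tracial setting it is available only under finiteness assumptions on $A$. The paper signals this itself: it notes that the case $A$ finite is already covered by \cite[Lemma 4.6]{HI15} (whose proof is essentially your net argument) and then gives a separate argument for general $A$; elsewhere (Lemma \ref{uniqueness lemma}) it invokes \cite[Theorem 4.3(5)]{HI15} only in the easy direction, namely that the existence of such a net forces non-embedding. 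Since Lemma \ref{tensor lemma} is applied in this paper to type $\rm III$ subalgebras $A$ (for instance in Lemma \ref{lemma for class P}, where it is used with type $\rm III_1$ factors), restricting to finite $A$ defeats the purpose. Second, even granting the net, the density step fails as you set it up: writing $X=X'+(X-X')$ with $X'\in M\ota N$, the error term is $\|E_{B\ovt N}((X-X')\tilde a_\lambda Y)\|_{2,\varphi\otimes\psi}\le\|(X-X')\tilde a_\lambda Y\|_{2,\varphi\otimes\psi}$, and for a non-tracial state the only bound one gets is $\|X-X'\|_\infty\,\|\tilde a_\lambda Y\|_{2,\varphi\otimes\psi}$; the inequality $\|zw\|_{2,\phi}\le\|z\|_{2,\phi}\|w\|_\infty$ that makes the left-hand approximation work in the $\rm II_1$ setting is simply false here, and Cauchy--Schwarz together with $\|\tilde a_\lambda\|\le 1$ does not repair it.

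The paper's actual reverse direction sidesteps all of this by using the basic-construction characterization instead of nets: by \cite[Theorem 2(ii)]{BH16}, the hypothesis $A\ovt N_0\preceq_{M\ovt N}B\ovt N$ yields a nonzero positive $d\in(A\ovt N_0)'\cap 1_A\langle M\ovt N,\widetilde B\ovt N\rangle 1_A$ with $T(d)\in M\ovt N$; since this relative commutant splits as $\bigl(A'\cap 1_A\langle M,B\rangle 1_A\bigr)\ovt(N_0'\cap N)$ and $T=T_0\otimes\id_N$, slicing by a faithful normal state $\psi$ on $N$ with $\psi\circ E_{N_0}=\psi$ produces $d_0:=(\id\otimes\psi)(d)$ satisfying the same conditions for $A,B$ inside $M$, whence $A\preceq_M B$ by the same theorem. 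If you want to salvage your route, you would first have to establish the hard direction of the net criterion for arbitrary $\sigma$-finite $A$, which is a harder task than the lemma itself; otherwise you should switch to the positive-element characterization.
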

\begin{proof}
	The case $A$ finite is proved in \cite[Lemma 4.6]{HI15}. Assume that $A\preceq_M B$ and take $p,q,\theta,v$ as in the definition. Then $p\otimes1, q\otimes 1, \theta\otimes \id, v\otimes 1$ work for $A\ovt N_0 \preceq_{M\ovt N} B\ovt N$. 

Assume next that $A\ovt N_0 \preceq_{M\ovt N} B\ovt N$. By \cite[Theorem 2(ii)]{BH16}, take a nonzero positive element $d\in (A\ovt N_0)' \cap 1_A \langle M\ovt N,\widetilde{B}\ovt N \rangle 1_A$ such that $d1_AJ1_BJ =d$ and $T(d)\in M\ovt N$, where $J$ is the modular conjugation for $L^2(M\ovt N)$, $\widetilde{B}$ is the unitization of $B$ in $M$, and $T$ is the operator valued weight from $\langle M\ovt N,\widetilde{B}\ovt N \rangle$ to $M\ovt N$ corresponding to $E_{\widetilde{B}} \ovt \id_{N}$. 
Let $\psi$ be a faithful normal state on $N$ such that $\psi\circ E_{N_0}=\psi$. Observe that
	$$(A\ovt N_0)' \cap  1_A\langle M\ovt N ,B \ovt N \rangle 1_A = \left(A'\cap 1_A\langle M,B \rangle 1_A \right)\ovt \left(N_0' \cap  N\right)$$
and hence $d_0:=(\id\otimes \psi)(d)$ is a nonzero positive element in $A'\cap 1_A\langle M,B \rangle 1_A$ satisfying $d_01_AJ1_BJ =d_0$. Observe that $\langle M\ovt N,\widetilde{B}\ovt N \rangle = \langle M ,\widetilde{B} \rangle \ovt N$ and $T$ is of the form $T_0 \otimes \id_N$, where $T_0$ is the operator valued weight corresponding to $E_{\widetilde{B}}$. Hence we have 
	$$ T_0(d_0) = (T\otimes \psi) (d) = (\id\otimes \psi) (T(d)) \in M.$$
By \cite[Theorem 2(ii)]{BH16}, we obtain $A\preceq_M B$.
\end{proof}

\begin{Lem}\label{uniqueness lemma}
	Let $M,N$ be $\sigma$-finite von Neumann algebras, $p\in M$ a projection and $A \subset pMp$ a diffuse von Neumann subalgebra with expectation. Then we have $A \not\preceq_{M\ovt N} N$.
\end{Lem}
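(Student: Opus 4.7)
The plan is to reduce the desired non-embedding to the statement $A \not\preceq_M \C$ by means of Lemma \ref{tensor lemma}, and then to derive a contradiction from the diffuseness of $A$.

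Concretely, I would apply Lemma \ref{tensor lemma} with ``$B$'' specialized to the trivial subalgebra $\C \cdot 1_M \subset M$ and ``$N_0$'' specialized to $\C \cdot 1_N \subset N$. Both of these scalar inclusions are automatically with expectation: since $M$ and $N$ are $\sigma$-finite, each admits a faithful normal state, which serves as the required conditional expectation onto the scalars. Under the canonical identifications $A \ovt \C \simeq A$ and $\C \ovt N \simeq N$ inside $M \ovt N$, Lemma \ref{tensor lemma} then yields the equivalence
$$A \preceq_{M \ovt N} N \ \Longleftrightarrow\ A \preceq_M \C,$$
so it suffices to rule out the right-hand side.

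Suppose for contradiction that $A \preceq_M \C$. Unpacking Definition \ref{definition intertwining}, this produces a nonzero projection $e \in A$, a nonzero projection $f \in \C$ (forced to equal $1$), a nonzero partial isometry $v \in Me$, and a unital normal $\ast$-homomorphism $\theta \colon eAe \to \C$. The kernel of $\theta$ is a weakly closed two-sided $\ast$-ideal of $eAe$, hence of the form $(eAe)(e-z)$ for a central projection $z \in eAe$ with $(eAe)z = \C z$; in particular $z$ is a minimal projection of $eAe$. However, any such $z \leq e$ is automatically a minimal projection of $A$, since any $q \in A$ with $0 < q \leq z$ satisfies $q = eqe \in eAe$ and is therefore forced to equal $z$ by minimality. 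This contradicts the diffuseness of $A$.

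I do not foresee any serious obstacle: the whole argument is a short chain of definitional manipulations, and the only care needed is to verify that the trivial inclusions $\C \subset M$ and $\C \subset N$ are admissible choices in Lemma \ref{tensor lemma}, which is immediate from $\sigma$-finiteness.
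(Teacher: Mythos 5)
Your argument is correct, but it is not the route the paper takes. You funnel everything through the converse direction of Lemma \ref{tensor lemma} (specializing $B=\C 1_M$ and $N_0=\C 1_N$, both legitimately with expectation by $\sigma$-finiteness) to reduce to $A\not\preceq_M\C$, and then finish with a purely algebraic observation: a unital normal $\ast$-homomorphism $\theta\colon eAe\to\C$ has $\sigma$-weakly closed kernel $(eAe)(e-z)$ with $(eAe)z=\C z$, so $z$ is a minimal projection of $eAe$ and hence of $A$, contradicting diffuseness. The paper instead argues directly inside $M\ovt N$: it first passes to a diffuse abelian subalgebra $A_0\subset A$ with expectation (using \cite[Lemma 4.8]{HI15} to justify the replacement), takes unitaries $u_n\in A_0$ with $u_n\to 0$ weakly, checks that $E_N((a\otimes b)^*u_n(c\otimes d))\to 0$ strongly, and concludes by the analytic intertwining criterion \cite[Theorem 4.3(5)]{HI15}. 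The two proofs locate the analytic content in different places: yours hides it in the nontrivial direction of Lemma \ref{tensor lemma} (which rests on \cite[Theorem 2(ii)]{BH16}), after which the remaining step is elementary and needs no reduction to the abelian case; the paper's is self-contained modulo the standard unitary-net characterization and is the more conventional computation. There is no circularity in your version, since Lemma \ref{tensor lemma} is proved independently of this statement, so both arguments are valid.
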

\begin{proof}
	This is actually proved in the last part of the proof of \cite[Theorem 5.6]{HI15}. Since $A$ is diffuse, there is a diffuse  abelian von Neumann subalgebra $A_0\subset A$ with expectation. Using \cite[Lemma 4.8]{HI15}, up to replacing $A$ with $A_0$, we may assume $A$ is abelian. 
Let $(u_n)_n$ be a sequence of unitaries in $A$ such that $u_n\to 0$ weakly. Then a simple computation yields that $ E_{N}( (a\otimes b)^* u_n (c\otimes d) ) \to 0 $ strongly for all $a,c\in M$ and $b,d\in N$, where $E_N$ is a faithful normal conditional expectation given by $E_N = \varphi\otimes \id_N$ for a faithful normal state $\varphi$ on $M$. This implies the conclusion by \cite[Theorem 4.3(5)]{HI15}.
\end{proof}

\subsection*{Factors in the class $\mathcal{P}$}

We show examples mentioned in Introduction are indeed contained in the class $\mathcal P$. For this we prepare a few lemmas.

\begin{Lem}\label{stably isom for class P}
	Let $M,N$ be separable factors, $p\in M$, $q\in N$ projections, and let $I_1$, $I_2$ be type $\rm I$ separable factors. If $pMp\ovt I_1 \simeq qNq \ovt I_2$ and if $M$ is in the class $\mathcal P$, then $N$ is in the class $\mathcal P$.
\end{Lem}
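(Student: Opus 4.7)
The plan is to reduce the verification of class $\mathcal P$ for $N$ to the known condition for $M$ via a stable isomorphism argument. First I would establish that the hypothesis yields an isomorphism $\Phi \colon M \ovt I \to N \ovt I$ with $I := \B(\ell^2)$, using the corner absorption $pMp \ovt I \simeq M \ovt I$ in properly infinite factors together with $I_j \ovt I \simeq I$ for any type $\mathrm{I}$ separable factor $I_j$. Separability and non-amenability of $N$ then transfer immediately; for the large centralizer, a finite subalgebra $N_0 \subset M$ with $N_0' \cap M \subset N_0$ lifts to $N_0 \ovt \ell^\infty \subset M \ovt I \simeq N \ovt I$ (with $(N_0 \ovt \ell^\infty)' \cap (N \ovt I) \subset N_0 \ovt \ell^\infty$), which descends to $N$ on cutting by a rank-one projection in $\ell^\infty$; alternatively, one can invoke the type classification of Lemma~\ref{bicentralizer lemma} and Proposition~\ref{states of infinite tensors} to see directly that the property is stably invariant.

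To verify the main condition, suppose $B \ovt N = P \ovt Q$ with $B, P, Q$ separable factors and $B$ admitting a large centralizer. Tensoring with $I$ and applying $\id_B \ovt \Phi^{-1}$ produces a tensor decomposition $B \ovt M \ovt I = P' \ovt Q'$, where $P' := (\id_B \ovt \Phi^{-1})(P)$ and $Q' := (\id_B \ovt \Phi^{-1})(Q \ovt I)$. Regrouping as $(B \ovt I) \ovt M$, the subalgebra $B \ovt I$ is again a separable factor with large centralizer, so the class $\mathcal P$ condition for $M$ yields either $P' \preceq_{B \ovt M \ovt I} B \ovt I$ or $Q' \lessdot_{B \ovt M \ovt I} B \ovt I$. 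Transferring back via $\id_B \ovt \Phi$, these read
\[
P \preceq_{B \ovt N \ovt I} B \ovt J \quad \text{or} \quad Q \ovt I \lessdot_{B \ovt N \ovt I} B \ovt J,
\]
where $J := \Phi(1_M \otimes I) \subset N \ovt I$ is a $\B(\ell^2)$-subfactor with expectation.

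The hard part, and the main obstacle, is that $J$ is a ``shifted'' copy of $I$ rather than literally the second tensor factor. I would resolve this by observing that $J$ and $I$ are two copies of $\B(\ell^2)$ sitting inside the factor $N \ovt I$: comparability of minimal projections in a factor immediately gives $J \preceq_{N \ovt I} I$, while injectivity of $J \simeq \B(\ell^2)$ combined with Arveson's extension theorem and Tomiyama's theorem gives $J \lessdot_{N \ovt I} I$. Tensoring with $B$ via Lemma~\ref{tensor lemma} and Corollary~\ref{amenability for tensor product} then upgrades these to $B \ovt J \preceq_{B \ovt N \ovt I} B \ovt I$ and $B \ovt J \lessdot_{B \ovt N \ovt I} B \ovt I$. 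Composing with the previous step via transitivity of $\preceq$ and Corollary~\ref{transitivity} for $\lessdot$ produces either $P \preceq_{B \ovt N \ovt I} B \ovt I$ or $Q \ovt I \lessdot_{B \ovt N \ovt I} B \ovt I$; descending the auxiliary $I$-factor via Lemma~\ref{tensor lemma} (with $N_0 = \C$) and via the converse direction of Corollary~\ref{amenability for tensor product} (with $M_2 = I$, $B_2 = I$) finally delivers $P \preceq_{B \ovt N} B$ or $Q \lessdot_{B \ovt N} B$, verifying $N \in \mathcal P$.
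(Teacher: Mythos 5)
Your overall strategy is the paper's: stabilize both sides by a type ${\rm I}_\infty$ factor so that $M\ovt \B(\ell^2)\simeq N\ovt \B(\ell^2)$, absorb the auxiliary type I factor into the ``$B$'' slot of Definition \ref{def of class P}, invoke the class $\mathcal P$ condition for $M$, and then descend. The $\lessdot$ branch of your argument is sound: transitivity for $\lessdot$ is exactly Corollary \ref{transitivity}, the amenable type I factor is handled as in the paper, and the final descent is the converse direction of Corollary \ref{amenability for tensor product}.

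The gap is in the $\preceq$ branch: you pass from $P\preceq_{B\ovt N\ovt I}B\ovt J$ and $B\ovt J\preceq_{B\ovt N\ovt I}B\ovt I$ to $P\preceq_{B\ovt N\ovt I}B\ovt I$ by ``transitivity of $\preceq$''. Popa's embedding relation is not transitive in general --- the corner of $B\ovt J$ that receives $P$ need not meet the corner that embeds into $B\ovt I$ --- the paper establishes no transitivity statement for $\preceq$, and the versions that exist even in the tracial setting require a full or strong embedding hypothesis that you would have to formulate and verify in the $\sigma$-finite framework of [HI15]. The shifted copy $J$ is moreover a self-inflicted difficulty: the paper works with $M\ovt I^1_\infty\ovt B=N\ovt I^2_\infty\ovt B$ (where $I^i_\infty:=I_i\ovt I_\infty$), in which $B$, $P$, $Q$ are subalgebras common to both descriptions, and in the embedding branch it first cuts the target $B\ovt I^1_\infty$ down to $B\ovt \C e_1$ for a minimal projection $e_1\in I^1_\infty$ via [HI15, Remark 4.2(4) and 4.5], hence to $B$ itself, before switching sides; removing the leftover auxiliary factor is then just Lemma \ref{tensor lemma}. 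If you replace your transitivity step by this minimal-projection reduction applied directly to the target $B\ovt J$ (cut by $1_B\otimes e$ with $e\in J$ minimal), your argument closes and coincides with the paper's.
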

\begin{proof}
Let $I_\infty$ be the type I$_\infty$ factor. Put $I_\infty^i := I_i \ovt I_\infty$ for $i=1,2$ and observe that they are of type I$_\infty$ and therefore properly infinite. We get $M\ovt I^1_\infty = N \ovt I^2_\infty$. 
Let $N\ovt B= P\ovt Q$ be as in Definition \ref{def of class P} and we will show $P\preceq_{N\ovt B}B$ or $Q\lessdot_{N\ovt B}B$. By tensoring with $I^2_\infty$, we have 
	$$ I^2_\infty\ovt P\ovt Q =  I^2_\infty \ovt N \ovt B= M\ovt I^1_\infty \ovt B.$$
Since $M$ is in the class $\mathcal{P}$, we have either 
	$$\text{(i) }I^2_\infty\ovt P \preceq_{M \ovt I^1_\infty \ovt B} B\ovt I^1_\infty; \quad \text{or} \quad \text{(ii) } Q \lessdot_{M\ovt I^1_\infty \ovt B} I_\infty^1 \ovt B.$$ 

	Assume (i). Let $e_i \in I_\infty^i$ be minimal projections for $i=1,2$. By \cite[Remark 4.2(4) and 4.5]{HI15}, we have $\C e_2\ovt P \preceq_{M \ovt I^1_\infty \ovt B} B\ovt \C e_1$, and hence $ P \preceq_{M \ovt I^1_\infty \ovt B} B$ by \cite[Remark 4.2(2)]{HI15}. 
Using the isomorphism $M \ovt I^1_\infty \ovt B=N\ovt I_\infty^2 \ovt B$ and applying Lemma \ref{tensor lemma}, we can remove $I_\infty^2$ and obtain $P \preceq_{N  \ovt B} B$.

	Assume (ii). Since $I_\infty^1$ is amenable, it holds that $I^1_\infty \lessdot_{I^1_\infty} \C $. Combined with a trivial condition $B \lessdot_{M\ovt B}B$ and using Corollary \ref{amenability for tensor product}, we get $I_\infty^1 \ovt B \lessdot_{M\ovt I_\infty^1\ovt B} \C\ovt B$. 
The assumption (ii) and Corollary \ref{transitivity} then implies $Q \lessdot_{M \ovt I^1_\infty \ovt B} B$. Using $M \ovt I^1_\infty \ovt B=N \ovt I^2_\infty \ovt B$ and applying Corollary \ref{amenability for tensor product}, we get that $Q\lessdot_{N\ovt B}B$. 
\end{proof}

\begin{Lem}\label{primeness of the class P}
	Let $M$ be a factor in the class $\mathcal P$. Then it is prime.
\end{Lem}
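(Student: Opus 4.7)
The strategy is to argue by contradiction, applying the defining property of class $\mathcal{P}$ from Definition \ref{def of class P} with the trivial choice $B = \C$, used twice---once for a decomposition $M = P \ovt Q$ and once for its flip $M = Q \ovt P$.

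Suppose $M \in \mathcal{P}$ is not prime, so that $M = P \ovt Q$ with neither $P$ nor $Q$ of type I. First I would check that the hypotheses of Definition \ref{def of class P} apply with $B = \C$: the algebra $\C$ is a separable factor with (trivially) a large centralizer; $P, Q \subset M$ inherit separability from $M$; and $P, Q$ are factors because $\mathcal{Z}(P) \ovt \mathcal{Z}(Q) = \mathcal{Z}(M) = \C$. The dichotomy then yields either $P \preceq_M \C$ or $Q \lessdot_M \C$.

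The first alternative would force $P$ to be of type I. Indeed, unwinding Definition \ref{definition intertwining}, $P \preceq_M \C$ supplies a nonzero projection $e \in P$ and a unital normal $*$-homomorphism $\theta \colon ePe \to \C$. Since $ePe$ is a factor, the ideal $\ker \theta$ must be trivial, so $ePe \cong \C$, which means $e$ is a minimal projection of the factor $P$. Hence $P$ is of type I, contradicting the assumption.

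Therefore $Q \lessdot_M \C$ must hold. Because $\langle M, \C \rangle = \B(L^2(M))$, Definition \ref{relative amenable def1}(2) supplies a conditional expectation from $\B(L^2(M))$ onto $Q$, i.e., $Q$ is injective. I would then apply the class $\mathcal{P}$ dichotomy once more, now to the relabeled decomposition $M = Q \ovt P$: either $Q \preceq_M \C$, which as above forces $Q$ to be of type I---a contradiction---or $P \lessdot_M \C$, which makes $P$ injective as well. In the remaining case $M = P \ovt Q$ is a tensor product of two injective factors, hence itself injective, contradicting the non-amenability of $M$ built into the class $\mathcal{P}$ definition.

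I do not anticipate a genuine obstacle: the two reinterpretations are both routine. Namely, $P \preceq_M \C$ encodes exactly that $P$ has a minimal projection (factoriality kills $\ker \theta$), while $Q \lessdot_M \C$ collapses to ordinary injectivity of $Q$ because $\langle M, \C\rangle = \B(L^2(M))$. The only conceptual point is the asymmetric use of the two options in Definition \ref{def of class P}: applying the single dichotomy (which treats the two tensor factors differently) to both orderings $P\ovt Q$ and $Q\ovt P$ is what produces the separate control of each side needed to finish the argument.
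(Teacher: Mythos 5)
Your proof is correct and rests on the same key idea as the paper's: apply the class $\mathcal{P}$ dichotomy with $B=\C$ to a putative nontrivial tensor decomposition $M=P\ovt Q$. The only difference is organizational: the paper uses non-amenability of $M$ to assume from the start that the second tensor factor is non-amenable, so a single application of the dichotomy already yields a contradiction (the first alternative contradicts diffuseness of $P$, the second contradicts non-amenability of $Q$), whereas you apply the dichotomy to both orderings $P\ovt Q$ and $Q\ovt P$, conclude that both factors are injective, and then contradict the non-amenability of $M$ itself. Both routes are valid, and your unwinding of $P\preceq_M\C$ into the existence of a minimal projection (via factoriality of $ePe$ killing $\ker\theta$) is exactly what the paper's appeal to diffuseness amounts to.
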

\begin{proof}
	Suppose by contradiction that $M$ has a decomposition $M=M_1\ovt M_2$ for diffuse factors $M_1$ and $M_2$. Since $M$ is non-amenable, we may assume $M_2$ is non-amenable. 
Let $B:=\C$ and consider 
	$$ B\ovt M = (B\ovt M_1) \ovt M_2 = P\ovt Q,$$
where $P:=B\ovt M_1$ and $Q:=M_2$. By the definition of the class $\mathcal P$, we have either $P\preceq_{B\ovt M} B$ or $Q\lessdot_{B\ovt M} B$. 

The first condition means $ M_1 \preceq_M \C$ which contradicts the diffuseness of $M_1$. The second condition means that $M_2 \lessdot_M \C$ which contradicts non-amenability of $M_2$. 
Thus in each case, we get a contradiction.
\end{proof}

\begin{Lem}\label{lemma for class P}
	Let $M$ be a separable non-amenable factor having a large centralizer. Then $M$ is in the class $\mathcal{P}$ if and only if it satisfies the condition in Definition \ref{def of class P} by assuming that $B,P,Q$ are type $\rm III_1$ factors having large centralizers.
\end{Lem}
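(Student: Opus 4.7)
The forward implication is immediate, since the defining condition of $\mathcal{P}$ specialized to the case where $B$, $P$, $Q$ are type $\rm III_1$ factors with large centralizers is exactly the restricted condition.

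For the converse, given an arbitrary decomposition $B \ovt M = P \ovt Q$ with $B$, $P$, $Q$ separable factors and $B$ having a large centralizer, my plan is to reduce to a decomposition where the restricted hypothesis applies. The reduction will be to let $R^1$, $R^2$ be two copies of the unique separable amenable type $\rm III_1$ factor, tensor both sides of $B \ovt M = P \ovt Q$ by $R^1 \ovt R^2$, and then regroup as
\[
B' \ovt M = P' \ovt Q', \qquad B' := B \ovt R^1 \ovt R^2, \ \ P' := P \ovt R^1, \ \ Q' := Q \ovt R^2.
\]
Since tensoring any factor with a type $\rm III_1$ factor yields type $\rm III_1$, each of $B'$, $P'$, $Q'$ is type $\rm III_1$. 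The large centralizer of $B'$ follows from the standard commutant formula for tensor products of algebras with finite large-centralizer subalgebras. For $P'$ and $Q'$, I would use that $P' \ovt Q' = B' \ovt M$ has a large centralizer (since $B'$ and $M$ do) and then invoke a bicentralizer-splitting argument for tensor products of type $\rm III_1$ factors, which transfers triviality of the bicentralizer from the product to each tensor component. This last step is where I expect the main technical obstacle to arise.

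With $B'$, $P'$, $Q'$ meeting the restricted hypothesis, I apply it to get either (i) $P' \preceq_{B' \ovt M} B'$ or (ii) $Q' \lessdot_{B' \ovt M} B'$. In case (i), Lemma \ref{tensor lemma} applied with $A = P$, ambient algebra $B \ovt M$, $N_0 = R^1$, $N = R^1 \ovt R^2$ strips off the auxiliary tensor factors and yields $P \preceq_{B \ovt M} B$. In case (ii), Corollary \ref{amenability for tensor product} applied in the reverse direction with $M_1 = B \ovt M$, $M_2 = R^1 \ovt R^2$, $B_1 = B$, $B_2 = R^1 \ovt R^2$, $A_1 = Q$, $A_2 = R^2$, combined with the trivially satisfied auxiliary condition $R^2 \lessdot_{R^1 \ovt R^2} R^1 \ovt R^2$, yields $Q \lessdot_{B \ovt M} B$. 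In either case, the defining condition of $\mathcal{P}$ is verified for the original decomposition, completing the proof.
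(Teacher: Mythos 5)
Your proposal is correct and follows essentially the same route as the paper: the paper also tensors by $R_\infty=R_1\ovt R_2$ (the amenable type $\rm III_1$ factor), regroups as $\widetilde B=R_\infty\ovt B$, $\widetilde P=R_1\ovt P$, $\widetilde Q=R_2\ovt Q$, and strips the auxiliary factors back off via Lemma \ref{tensor lemma} and Corollary \ref{amenability for tensor product}. The bicentralizer-splitting step you flag as the main obstacle is exactly what the paper handles by citing \cite[Theorem G]{AHHM18}, which transfers the large-centralizer property from $\widetilde P\ovt\widetilde Q$ to each type $\rm III_1$ tensor component.
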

\begin{proof}
	We show the `if' direction. Let $B,P,Q$ be as in Definition \ref{def of class P} and assume that $P \not\preceq_{B\ovt M} B$. We will show that $Q \lessdot_{B\ovt M} B$. 
Let $R_\infty$ be the Araki--Woods factor of type $\rm III_1$ and decompose it as $R_\infty = R_1\ovt R_2$, where $R_1\simeq R_2\simeq R_\infty$. Consider 
	$$\widetilde{B} \ovt M = \widetilde{P} \ovt \widetilde{Q} , \quad\text{where } \widetilde{B}:=R_\infty \ovt B, \ \widetilde{P}:=R_1 \ovt P, \ \widetilde{Q}:=R_2 \ovt Q. $$
The assumption $P \not\preceq_{B\ovt M} B$ is equivalent to $\widetilde{P} \not\preceq_{\widetilde{B}\ovt M} \widetilde{B}$ by Lemma \ref{tensor lemma}. 
Observe that by \cite[Theorem G]{AHHM18}, $\widetilde{P}$ and $\widetilde{Q}$ must admit large centralizers (since so does $P\ovt Q$ by assumption). 
Hence if $M$ satisfies the `if' condition of the statement, since $\widetilde{B},\widetilde{P},\widetilde{Q}$ are type $\rm III_1$ factors with large centralizers, we get that $\widetilde{Q} \lessdot_{\widetilde{B}\ovt M} \widetilde{B}$. 
By Corollary \ref{amenability for tensor product}, this implies $Q \lessdot_{B\ovt M} B$ and this is the desired condition.
\end{proof}

\begin{Lem}\label{condition for class P}
	Let $M$ be a separable non-amenable factor having a large centralizer. Assume $M$ satisfies the following condition:
\begin{itemize}
	\item for any separable type $\rm III_1$ factor $B$ and an abelian von Neumann subalgebra $A\subset B\ovt M$ with expectation, we have either $A\preceq_{B\ovt M}B$ or $A'\cap (B\ovt M) \lessdot_{B\ovt M} B$.
\end{itemize}
Then $M$ is in the class $\mathcal{P}$.
\end{Lem}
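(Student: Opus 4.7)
The plan is as follows. First, by Lemma \ref{lemma for class P} we reduce the problem to the situation in which $B$, $P$ and $Q$ are separable type $\rm III_1$ factors with large centralizers and $B\ovt M=P\ovt Q$. Since $P$ has a large centralizer, Popa's theorem \cite[Theorem 3.3]{Po81} (as used in the proof of Lemma \ref{bicentralizer lemma}) furnishes a MASA $A\subset P$ with a faithful normal conditional expectation. Maximality of $A$ in the factor $P$ yields $A'\cap P=A$ and hence $A'\cap (B\ovt M)=A\ovt Q$. Applying the standing hypothesis to the abelian subalgebra $A\subset B\ovt M$ produces two alternatives: either $A\preceq_{B\ovt M}B$ (Case~1), or $A\ovt Q\lessdot_{B\ovt M}B$ (Case~2).

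Case~2 is immediate. Fix a faithful normal tracial state on the abelian algebra $A$ and let $F\colon A\ovt Q\to Q$ denote the induced faithful normal conditional expectation. Composing $F$ with the conditional expectation $\langle B\ovt M,B\rangle\to A\ovt Q$ supplied by the definition of $A\ovt Q\lessdot_{B\ovt M}B$ gives a conditional expectation $\langle B\ovt M,B\rangle\to Q$ that restricts to $E_Q^{B\ovt M}$ on $B\ovt M$. This says precisely that $Q\lessdot_{B\ovt M}B$, which is the second alternative in Definition \ref{def of class P}.

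In Case~1, applying Lemma \ref{relative commutant} to $A\preceq_{B\ovt M}B$ yields
\[M=B'\cap(B\ovt M)\preceq_{B\ovt M}A'\cap(B\ovt M)=A\ovt Q.\]
The desired class $\mathcal P$ conclusion $P\preceq_{B\ovt M}B$ is, by a second application of Lemma \ref{relative commutant} (using $P=Q'\cap(B\ovt M)$ and $B=M'\cap(B\ovt M)$), equivalent to $M\preceq_{B\ovt M}Q$.

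Consequently, the remaining task and the main obstacle is to upgrade $M\preceq_{B\ovt M}A\ovt Q$ to $M\preceq_{B\ovt M}Q$, i.e.\ to absorb the abelian tensor factor $A$. I would carry this out by a direct-integral style argument: $A\ovt Q$ decomposes as a direct integral of copies of $Q$ parametrised by the measure space $A$, and the partial intertwiner of the non-amenable factor $M$ into $A\ovt Q$ should, after averaging against a faithful state on $A$, survive as a nonzero partial intertwiner into a single fibre $Q$. The non-amenability of $M$ (equivalently, the absence of an amenable direct summand) is what prevents the averaged intertwiner from collapsing, in the spirit of the non-embedding result Lemma \ref{uniqueness lemma}. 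Making this absorption precise in the non-tracial type $\rm III_1$ setting — and in particular checking nonvanishing of the averaged intertwiner at the level of the $\widetilde M$-$\widetilde{A\ovt Q}$ bimodule of the continuous cores, so that Theorem \ref{injectivity by core for relative amenable} can be invoked — is the delicate step of the proof.
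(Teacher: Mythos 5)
Your reduction via Lemma \ref{lemma for class P}, your identification $A'\cap(B\ovt M)=A\ovt Q$ for a MASA $A\subset P$, and your handling of Case~2 (composing the expectation onto $A\ovt Q$ with $\tau_A\otimes\id_Q$ to get $Q\lessdot_{B\ovt M}B$) are all correct and consistent with what the paper does. The problem is Case~1. The ``absorption'' step $M\preceq_{B\ovt M}A\ovt Q\ \Rightarrow\ M\preceq_{B\ovt M}Q$ is not a routine averaging argument: downgrading an intertwining into $A\ovt Q$ to an intertwining into the smaller algebra $Q$ is exactly the kind of statement that fails in general and, when true, requires substantial extra input (spectral gap, normalizer control, etc.), none of which is available here. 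Note also that applying Lemma \ref{relative commutant} to $M\preceq_{B\ovt M}A\ovt Q$ only returns $A\preceq_{B\ovt M}B$, i.e.\ the hypothesis of Case~1 again, so there is no cheap bootstrap. You flag this as the delicate step but do not supply it, and as written it is a genuine gap.

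The paper avoids Case~1 altogether by choosing $A$ so that the first alternative of the hypothesis cannot occur. It argues contrapositively: assume $P\not\preceq_{B\ovt M}B$ and aim for $Q\lessdot_{B\ovt M}B$. Since $P$ is a type $\rm III_1$ factor with a large centralizer, Lemma \ref{bicentralizer lemma} gives a type $\rm II_1$ subfactor $N\subset P$ with expectation and $N'\cap P=\C$; two applications of Lemma \ref{relative commutant} (using $N'\cap(B\ovt M)=Q$ and $M'\cap(B\ovt M)=B$) show that $N\preceq_{B\ovt M}B$ would force $P\preceq_{B\ovt M}B$, so $N\not\preceq_{B\ovt M}B$. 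Then \cite[Corollary 4.7]{HI15} produces an abelian $A\subset N$ with expectation such that $A\not\preceq_{B\ovt M}B$. For this $A$ the hypothesis of the lemma leaves only the second alternative, $A'\cap(B\ovt M)\lessdot_{B\ovt M}B$, and since $Q\subset A'\cap(B\ovt M)$ is with expectation one concludes $Q\lessdot_{B\ovt M}B$. The missing idea in your proposal is precisely this: do not take an arbitrary MASA and then fight the embedding case; instead use the standing assumption $P\not\preceq_{B\ovt M}B$ to manufacture an abelian subalgebra that is guaranteed not to embed into $B$.
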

\begin{proof}
	Let $B,P,Q$ be as in Definition \ref{def of class P} and assume that $P \not\preceq_{B\ovt M} B$. We will show that $Q \lessdot_{B\ovt M} B$. Thanks to Lemma \ref{lemma for class P}, we may assume that $B,P,Q$ are type $\rm III_1$ factors having large centralizers. 
Since $P$ has a large centralizer and is of type $\rm III_1$, by Lemma \ref{bicentralizer lemma} it has a type $\rm II_1$ subfactor $N\subset P$ with expectation such that $N'\cap P=\C$. 
Observe that we have $N \not\preceq_{B\ovt M} B$ by Lemma \ref{relative commutant} (indeed $N \preceq_{B\ovt M} B$ implies $P\preceq_{B\ovt M} B$ by taking relative commutant two times). Using \cite[Corollary 4.7]{HI15}, there is an abelian von Neumann subalgebra $A\subset N$ with expectation such that $A \not\preceq_{B\ovt M} B$. 
Now we apply the assumption of $M$ in the statement and get that $A'\cap M \lessdot_{B\ovt M} B$. Since $Q \subset A'\cap M$ is with expectation, we conclude that $Q \lessdot_{B\ovt M} B$.
\end{proof}

\begin{Thm}
	The following factor $M$ belongs to the class $\mathcal{P}$.
\begin{itemize}
	\item[$\rm (i)$] A free product von Neumann algebra $M:=(M_1,\varphi_1)*(M_2,\varphi_2)$, where $(M_i,\varphi_i)$ are diffuse von Neumann algebras with separable predual equipped with faithful normal states. 

	\item[$\rm (ii)$] A non-amenable separable factor $M$ that satisfies condition (AO)$^+$ in the sense of \cite[Definition 3.1.1]{Is12a} and has the W$^*$CBAP (e.g.\ \cite[\S12.3]{BO08}). 
This includes the following examples (see also \cite[Remarks 2.7(3)]{HI15}):
\begin{itemize}
	\item any group von Neumann algebra $L\Gamma$, where $\Gamma$ is an ICC, non-amenable and weakly amenable discrete group which is bi-exact in the sense of \cite[\S15.1]{BO08};
	\item any compact quantum group von Neumann algebra $L^\infty(\G)$ that is a non-amenable factor, where $\widehat{\G}$ is weakly amenable and bi-exact (see \cite[Theorem C]{Is13});
	\item any free Araki--Woods factor \cite{HR10}\cite[Appendix C]{HI15}.
\end{itemize}
\end{itemize}
\end{Thm}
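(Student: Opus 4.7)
My plan is to use Lemma \ref{condition for class P} in both cases, which reduces the verification of membership in $\mathcal P$ to the following dichotomy: for every separable type $\rm III_1$ factor $B$ and every abelian von Neumann subalgebra $A\subset B\ovt M$ with expectation, one has $A\preceq_{B\ovt M}B$ or $A'\cap(B\ovt M)\lessdot_{B\ovt M}B$. Before invoking it, I would check the preliminary hypotheses. For (i), Ueda's structural results on free products with diffuse states give that $M$ is a non-amenable factor with a large centralizer (indeed the free product state restricted to a suitable subalgebra realises a large centralizer via the Tomita--Takesaki structure of the free product). For (ii), non-amenability is assumed and the large centralizer assumption is built into the examples listed (group factors, quantum group factors, free Araki--Woods factors all admit such subalgebras).

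For case (ii), condition (AO)$^+$ together with W$^*$CBAP is tailor-made to yield the desired dichotomy via the C$^*$-algebraic techniques initiated in \cite{Oz03} and refined in \cite{Is12a,Is14,HI15}. The strategy is: tensor with $B$ to preserve (AO)$^+$-type structure (here one uses that $B\ovt M$ still enjoys the relevant bi-exact behaviour against the subalgebra $B$), then run the dichotomy argument on the continuous core using the characterization from Theorem \ref{injectivity by core for relative amenable}. Concretely, given abelian $A\subset B\ovt M$ with $A\not\preceq_{B\ovt M}B$, the Akemann--Ostrand-type nuclear embedding, composed with the CBAP approximation, produces a ucp lift from $p\langle B\ovt M,B\rangle p$ into $\langle p(B\ovt M)p,A'\cap p(B\ovt M)p\rangle$ that is the identity on $p(B\ovt M)p$, giving $A'\cap (B\ovt M)\lessdot_{B\ovt M}B$ by Theorem \ref{injectivity by core for relative amenable}(4).

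For case (i), the approach is the deformation/rigidity method applied to the amalgamated free product
\[
B\ovt M=(B\ovt M_1)\ast_B(B\ovt M_2),
\]
using the malleable deformation of a free product state of Ioana--Peterson--Popa together with its type III refinement by Houdayer--Ueda \cite{HU15b}. Given $A\subset B\ovt M$ abelian with expectation and $A\not\preceq_{B\ovt M}B$, the spectral gap/deformation argument forces a convergence of the deformation on $A$, which by freeness yields that $A'\cap(B\ovt M)$ is amenable relative to $B$ in $B\ovt M$; note that Lemma \ref{uniqueness lemma} rules out the possibility that $A$ intertwines into $B\ovt M_i$ properly over $B$ (since that would force $A\preceq_{B\ovt M} B$ via the diffuseness of the complementary component).

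The main obstacle I anticipate is the type III technicalities that distinguish this setting from the finite ones: keeping careful track of expectations, operator valued weights, and central supports throughout the dichotomy arguments, and ensuring that the passage between $M$, $B\ovt M$, and their continuous cores (via Theorem \ref{injectivity by core for relative amenable}) is compatible with the approximation properties (AO)$^+$/W$^*$CBAP in (ii) and with the amalgamated free product malleable deformation in (i). In particular, verifying that the deformation/rigidity conclusion outputs relative amenability with respect to $B$ itself, rather than $B\ovt M_i$, is the delicate step in (i); and extracting the \textit{relative} version (over $B$) of Ozawa's bi-exactness dichotomy is the delicate step in (ii).
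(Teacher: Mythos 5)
Your treatment of case (ii) matches the paper: the large centralizer comes from solidity, and the dichotomy of Lemma \ref{condition for class P} is verified by the (AO)$^+$/W$^*$CBAP argument of \cite{Oz03} as adapted in \cite{Is16b}. The preliminary facts for case (i) (factoriality, fullness, large centralizer via \cite{Ue10,HU15a}) are also as in the paper. The problem is your route for case (i).

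There is a genuine gap in case (i): the hypothesis of Lemma \ref{condition for class P} is in general \emph{false} for free product factors, so you cannot reduce to it. Take $M_1=L\F_2\ovt L^\infty[0,1]$ (diffuse, allowed by the statement) and $A=L^\infty[0,1]\subset M_1\subset B\ovt M$. Then $A\not\preceq_{B\ovt M}B$ by Lemma \ref{uniqueness lemma}, but $A'\cap(B\ovt M)\supseteq B\ovt L\F_2$ is not amenable relative to $B$. The underlying reason is that the amalgamated free product rigidity theorem \cite[Theorem A.4]{HU15b} for $B\ovt M=(B\ovt M_1)\ast_B(B\ovt M_2)$ yields a \emph{trichotomy}: besides $\mathcal N(A)''\lessdot\widetilde B$ and $A\preceq\widetilde B$, there is the case $\mathcal N(A)''\preceq C_\varphi(B\ovt M_i)$, and your claim that Lemma \ref{uniqueness lemma} rules this out ("intertwining into $B\ovt M_i$ would force $A\preceq B$") is incorrect — any diffuse abelian $A\subset M_1$ intertwines into $B\ovt M_1$ without intertwining into $B$. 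This is exactly why the paper does not run the argument for an arbitrary abelian $A$: it works directly with the condition of Definition \ref{def of class P}, i.e.\ with a tensor decomposition $B\ovt M=P\ovt Q$, after reducing via Lemma \ref{lemma for class P} to $B,P,Q$ of type $\rm III_1$ with large centralizers. It then chooses $\rm II_1$ subfactors $P_0\subset P$, $Q_0\subset Q$ with trivial relative commutants and an abelian $A\subset P_0$ with $A\not\preceq B$, and uses the extra structure ($Q\subset A'\cap N$, $Q_0'\cap Q=\C$, and the commutation between $P$ and $Q$) to dispose of the third alternative: either $Q_0\preceq\widetilde B$, which upgrades to $Q\preceq_N B$ and then to $Q\lessdot_N B$ by an explicit cutting argument with \cite[Lemma 4.13]{HI15}; or $Q_0\not\preceq\widetilde B$, in which case a chain of relative commutant computations (via \cite[Theorem 2.4]{CH08}) forces $q\widetilde Nq\subset qC_\varphi(B\ovt M_1)q$ and hence $C\preceq_M M_1$ for a diffuse abelian $C\subset M_2$, a contradiction. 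Without this case analysis — which occupies most of the paper's proof — your argument does not close.
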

\begin{proof}
	The second statement follows from Ozawa's celebrated solidity theorem \cite{Oz03}. Indeed the large centralizer condition is verified in \cite[Theorem 3.7]{HI15} by the solidity. 
Proceeding as in the proof of \cite[Proposition 7.3]{Is16b} (which is a generalization of \cite{Oz03}), we can prove the condition in Lemma \ref{condition for class P} and therefore $M$ is in the class $\mathcal{P}$. See also the proof of \cite[Theorem 5.3.3]{Is12b} which treats Ozawa's proof for type III factors.

	We see the first statement. The factoriality and the large centralizer condition are proved in \cite[Theorem 3.4]{Ue10} and \cite[Theorem A.1]{HU15a} respectively. It is full by \cite[Theorem 3.7]{Ue10}, so it is non-amenable. 
So we will check only the condition in Definition \ref{def of class P}.

	Let $M$ be the free product as in the statement and let $B,P,Q$ be as in Definition \ref{def of class P}. By Lemma \ref{lemma for class P}, we assume that $B,P,Q$ are type $\rm III_1$ factors having large centralizers. As in the proof of Lemma \ref{condition for class P}, we can find type $\rm II_1$ subfactors $P_0\subset P$ and $Q_0\subset Q$ with expectations and with trivial relative commutants, and an abelian subalgebra $A\subset P_0$ such that $A\not \preceq_{B\ovt M} B$. We will show that $Q \lessdot_{B\ovt M}B$. 

	Let $\varphi_M$ be the free product state on $M$ and $\varphi_B$, $\psi_P$, $\psi_Q$ faithful normal states on $B$, $P$, $Q$ respectively. We may assume $P_0=P_{\psi_P}$ and $Q_0=Q_{\psi_Q}$. We put $N:=B\ovt M$,  $\varphi:=\varphi_B\otimes \varphi_M$, $\psi:=\psi_P\otimes \psi_Q$, and consider continuous cores $\widetilde{N}:=C_\varphi(N)$, $\widetilde{B}:=C_\varphi(B)$, $\widetilde{Q}:=\Pi_{\varphi,\psi}(C_\psi(Q))$, $\widetilde{P}:=\Pi_{\varphi,\psi}(C_\psi(P))$ and $\widetilde{A}:=\Pi_{\varphi,\psi}(C_\psi(A))$. We write as $\Tr$ the canonical trace on $\widetilde{N}$. 
Observe that $\widetilde{A}$ is abelian containing $A$ and the inclusion $A\subset \widetilde{N}$ is with expectation. 
For any $\Tr$-finite projection $e\in \widetilde{A}$, we have $Ae \not\preceq_{\widetilde{N}}\widetilde{B}$ by \cite[Proposition 2.10]{BHR12}. Observe that there is the amalgamated free product structure 
	$$\widetilde{N}=C_{\varphi}(B\ovt M_1)*_{\widetilde{B}} C_\varphi(B\ovt M_2).$$ 
We apply \cite[Theorem A.4]{HU15b} and get the following result: for any $\Tr$-finite projection $e\in \widetilde{A}$, we have either one of the following conditions:
	$$\text{(i) } \mathcal{N}_{e\widetilde{N}e}(Ae)''\lessdot_{\widetilde{N}}\widetilde{B}; \quad \text{or} \quad \text{(ii) } \mathcal{N}_{e\widetilde{N}e}(Ae)''\preceq_{\widetilde{N}}C_\varphi(B\ovt M_i) \text{ for some }i\in \{1,2\}.$$

\noindent
\textbf{Suppose first that (i)} $\mathcal{N}_{e\widetilde{N}e}(Ae)''\lessdot_{\widetilde{N}}\widetilde{B}$ for all such $e$. Observe that $e\widetilde{A}^ce\subset \mathcal{N}_{e\widetilde{N}e}(Ae)''$, where $\widetilde{A}^c:=\Pi_{\varphi,\psi}(C_\psi(A'\cap N)) $. We have $\widetilde{A}^c\lessdot_{\widetilde{N}}\widetilde{B}$ which implies that ${A}^c\lessdot_{{N}}{B}$ by Theorem \ref{injectivity by core for relative amenable}. Hence we obtain ${Q}\lessdot_{{N}}{B}$ and get the conclusion. 

\vspace{0.5em}
\noindent
\textbf{Suppose next that (ii)} $\mathcal{N}_{e\widetilde{N}e}(Ae)''\preceq_{\widetilde{N}}C_\varphi(B\ovt M_i)$ for some $i\in \{1,2\}$ and for a projection $e$. We have $\widetilde{A}^ce\preceq_{\widetilde{N}}C_\varphi(B\ovt M_i)$ and hence $\widetilde{A}^c\preceq_{\widetilde{N}}C_\varphi(B\ovt M_i)$ for some $i$. For simplicity we assume $i=1$. Using \cite[Lemma 4.8]{HI15} and since the inclusion $Q_0 \subset \widetilde{A}^c$ is with expectation, it holds that $Q_0\preceq_{\widetilde{N}}C_\varphi(B\ovt M_1)$. In this setting, we consider the following two cases:
	$$\text{(ii-a) }Q_0 \preceq_{\widetilde{N}} \widetilde{B}; \quad \text{or} \quad \text{(ii-b) }Q_0 \not\preceq_{\widetilde{N}}\widetilde{B} .$$

\noindent
\textbf{Assume that (ii-a)} $Q_0 \preceq_{\widetilde{N}}\widetilde{B}$. Then  \cite[Proposition 2.10]{BHR12} implies $Q_0 \preceq_N B$. Using $Q_0'\cap Q=\C$ and applying Lemma \ref{relative commutant} two times, we indeed get $Q\preceq_N B$.  
Applying \cite[Lemma 4.13]{HI15} and since $B,Q$ are type III factors, we can take a partial isometry $v$ such that $qBq=vpQpv^*\ovt L$, where $q=vv^*\in B'\cap N=M$, $p=v^*v\in Q'\cap N=P$, and $L$ is a factor. Since $P$ is a type III factor, $p$ is equivalent to $1$, so we may assume $v^*v=1$. Since $vQv^* \subset Bq$ and $q\in B'\cap N$, we get that $vQv^*$ is injective relative to $B$ inside $N$. 
Take a conditional expectation $E\colon q\langle N,B \rangle q \to v Qv^*$ which is faithful and normal on $qNq$, and consider the composition map $\Ad v^*\circ E\circ \Ad v\colon \langle N,B \rangle  \to Q$. It is easy to show that it is a conditional expectation $\langle N,B \rangle  \to Q$ which is faithful and normal on $N$. 
We obtain $Q\lessdot_N B$ which is the desired condition.

\vspace{0.5em}
\noindent
\textbf{Assume that (ii-b)} $Q_0 \not\preceq_{\widetilde{N}}\widetilde{B}$ and we will deduce a contradiction. 
Combined with the assumption $Q_0\preceq_{\widetilde{N}}C_\varphi(B\ovt M_1)$ and using (the proof of) \cite[Lemma 2.6]{HU15b}, there are $p\in Q_0$, $q\in C_\varphi(B\ovt M_1)$, $\theta\colon pQ_0 p\to qC_\varphi(B\ovt M_1)q$, $v\in \widetilde{N}$ such that they witness $Q_0\preceq_{\widetilde{N}}C_\varphi(B\ovt M_1)$ and that $\theta(pQ_0p)\not\preceq_{\widetilde{N}}\widetilde{B}$. 
Using the proof of \cite[Theorem 4.3(1)$\Rightarrow$(2-a)]{HI15}, up to replacing $q$ with a slightly smaller projection, we may assume $\Tr(q)<\infty$. 
Observe that the condition $\theta(pQ_0p)\not\preceq_{\widetilde{N}}\widetilde{B}$ implies $\theta(pQ_0p)\not\preceq_{C_{\varphi}(B\ovt M_1)}\widetilde{B}$ and hence by \cite[Theorem 2.4]{CH08}, it holds that $\theta(pQ_0p)'\cap q\widetilde{N}q \subset qC_\varphi(B\ovt M_1)q$. Since $vv^*\in \theta(pQ_0p)'\cap q\widetilde{N}q \subset qC_\varphi(B\ovt M_1)q$, up to replacing $q$ with $vv^*$ (and $\theta$ with $\theta(\, \cdot\, )vv^*$), we may assume $q=vv^*$. 
Observe that $\widetilde{P}$ is a II$_\infty$ factor with the trace $\Tr$, and $\Tr$ is semifinite on the diffuse subalgebra $\widetilde{P}_0:=\Pi_{\varphi,\psi}(C_\psi(P_0))$. So any projection in $\widetilde{P}$ is equivalent to a projection in $\widetilde{P}_0$. 
Since $v^*v\in (pQ_0p)'\cap p\widetilde{N}p = \widetilde{P}p$, it is equivalent to a projection in $\widetilde{P}_0p$. Up to replacing, we may assume that $v^*v\in \widetilde{P}_0p$. 
Summarizing we are in the situation that $vv^*=q \in C_\varphi(B\ovt M_1)$, $v^*v\in \widetilde{P}_0p$, together with the inclusion 
	$$ vQ_0 v^* = \theta(pQ_0p) \subset qC_\varphi(B\ovt M_1) q.$$
The assumption $Q_0\not\preceq_{\widetilde{N}}\widetilde{B}$ and \cite[Remark 4.2(2)]{HI15} imply $v^*vQ_0v^*v\not\preceq_{\widetilde{N}}\widetilde{B}$. This means $vQ_0v^*\not\preceq_{\widetilde{N}}\widetilde{B}$ and so $vQ_0v^*\not\preceq_{C_\varphi(B\ovt M_1)}\widetilde{B}$, and therefore \cite[Theorem 2.4]{CH08} implies 
	$$ v\widetilde{P}v^* = v (Q_0'\cap \widetilde{N}) v^* = (vQ_0v^*)'\cap  q\widetilde{N} q \subset qC_\varphi(B\ovt M_1) q. $$
Recall that we first assumed $A\not\preceq_N B$. By (the proof of) \cite[Proposition 2.10]{BHR12}, this implies $A\not\preceq_{\widetilde{N}}\widetilde{B}$. 
Since $A\subset \widetilde{P}_0$, we have $\widetilde{P}_0\not\preceq_{\widetilde{N}}\widetilde{B}$ by \cite[Lemma 4.8]{HI15}. Since $v^*v \in \widetilde{P}_0p$, by \cite[Remark 4.2(2)]{HI15}, we have $v^*v\widetilde{P}_0v^*v\not \preceq_{\widetilde{N}}\widetilde{B}$, which is equivalent to $v\widetilde{P}_0v^*\not \preceq_{\widetilde{N}}\widetilde{B}$.
This implies $v\widetilde{P}_0v^*\not \preceq_{C_\varphi(B\ovt M_1)}\widetilde{B}$ and therefore \cite[Theorem 2.4]{CH08} again can be applied, so that
	$$ v \widetilde{Q} v^* = v(P_0'\cap \widetilde{N}) v^* = (vP_0v^*)' \cap q\widetilde{N}q \subset qC_\varphi (B\ovt M_1)q.$$
In summary we obtain 
	$$ q\widetilde{N}q = v \widetilde{N} v^* = v (\widetilde{P}\vee \widetilde{Q}) v^*\subset qC_\varphi(B\ovt M_1)q.$$
This implies $ \widetilde{N} \preceq_{\widetilde{N}}C_\varphi(B\ovt M_1)$ and hence $C_\varphi(M_2)\preceq_{\widetilde{N}}C_\varphi(B\ovt M_1)$ by \cite[Lemma 4.8]{HI15}. Let $C \subset M_2$ be any diffuse abelian von Neumann algebra with expectation and let $\omega$ be a faithful normal state on $N$ such that $\omega\circ E_{M_2}=\omega$ and $C\subset (M_2)_\omega$. We have that $\Pi_{\varphi,\omega}(C_\omega(M_2))\preceq_{\widetilde{N}}C_\varphi(B\ovt M_1)$ and hence $\Pi_{\varphi,\omega}(C_\omega(C))\preceq_{\widetilde{N}}C_\varphi(B\ovt M_1)$ by \cite[Lemma 4.8]{HI15}. We apply \cite[Proposition 2.10]{BHR12} and get $C\preceq_{N}B\ovt M_1$. Lemma \ref{tensor lemma} then implies $C\preceq_M M_1$. Since $C\subset M_2$ is diffuse, which is equivalent to $C\not\preceq_{M_2}\C$, we obtain $C\not\preceq_{M}M_1$ by \cite[Lemma 2.7]{HU15b}, that is a contradiction. 
\end{proof}

\section{\bf Proof of Theorem \ref{ThmA}}\label{Proof of Theorem A}

\begin{proof}[Proof of Theorem \ref{ThmA}]
	Fix faithful normal states $\varphi_0$ and $\psi_0$ on $M_0$ and $N_0$ respectively. As in previous sections, we use the following notation: 
\begin{align*}
	(M,\varphi):= \ovt_{m\in \{0\}\cup X}(M_m,\varphi_m),\quad (N,\psi):=\ovt_{n\in \{0\}\cup Y}(N_n,\psi_n); \\ 
	M_{\mathcal{F}}:=\ovt_{n\in \mathcal{F}} M_n \subset M, \quad M_{\mathcal{F}}^c:=\ovt_{n\in \mathcal{F}^c} M_n \subset M, \quad \text{for all }\mathcal{F}\subset \{0\}\cup X.
\end{align*}
We use similar notations for $N_n$, such as $N_{\mathcal{F}}$ for $\mathcal{F}\subset \{0\} \cup Y$. We identify $ M = N$ for simplicity. 
We first assume all $M_m$ and $N_n$ are factors only and prove the following claim. The assumptions in the claim is more general than the one in the theorem, but this generality is necessary for the proof. 
\begin{claim}
	Assume that for all $m\in X$ there are projection $p_m\in M_m$ and $p_m' \in M_m^c$ such that $p_mM_m p_m p_m'= P_m \ovt R_m$, where $P_m$ is a factor in the class $\mathcal P$ and $R_m$ is an amenable factor. Assume that all $N_n$ are non-amenable factors. 
Then there is an injective map $\sigma\colon Y \to X$ such that $P_{\sigma(j)} \preceq_{M} N_j$ for all $j\in Y$, where $P_m$ is regarded as a unital subalgebra of $p_mp_m'Mp_mp_m'$ for all $m\in X$.
\end{claim}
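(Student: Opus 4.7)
My approach is to apply the defining property of the class $\mathcal{P}$ to each $P_m$ in a tensor decomposition arising from $M = N_j \ovt N_j^c$, combine the resulting alternatives via the intersection lemmas of Section \ref{Relative amenability for subalgebras} to obtain, for each $j \in Y$, some $m \in X$ with $P_m \preceq_M N_j$, and finally invoke Lemma \ref{uniqueness lemma} to upgrade this assignment to an injection.

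\textbf{Step 1 (Local setup).} Fix $j \in Y$ and $m \in X$. Write $N_j^c := \ovt_{n \in Y \setminus \{j\}}(N_n, \psi_n) \ovt N_0$, so $M = N_j \ovt N_j^c$, and set $B_m := R_m \ovt (p_m' M_m^c p_m')$, giving $p_m p_m' M p_m p_m' = P_m \ovt B_m$. By Corollary \ref{type of infinite tensor}, $B_m$ has a large centralizer (each $P_k \in \mathcal{P}$ provides one, and the $R_k$, $M_0$ are amenable). Since $N_j$ and $N_j^c$ are factors, the projection $p_m p_m'$ is equivalent in $M$ to some $q \otimes q'$ with $q \in N_j$ and $q' \in N_j^c$; choose a partial isometry $v \in M$ witnessing this equivalence. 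Then $\Ad(v^*)$ identifies
$$p_m p_m' M p_m p_m' \simeq q N_j q \ovt q' N_j^c q',$$
yielding a second tensor decomposition of the same corner.

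\textbf{Step 2 (Dichotomy).} Applying the defining property of $P_m \in \mathcal{P}$ to the two coexisting decompositions of $p_m p_m' M p_m p_m'$, exactly one of the alternatives
$$(\mathrm{i})_m:\ q' N_j^c q' \preceq B_m, \qquad (\mathrm{ii})_m:\ q N_j q \lessdot B_m$$
holds inside $p_m p_m' M p_m p_m'$ (after the transfer by $v$). In case $(\mathrm{i})_m$, Lemma \ref{relative commutant} combined with $B_m' \cap p_m p_m' M p_m p_m' = P_m$ and the commutant relation from the second decomposition yields $P_m \preceq_M q N_j q \subset N_j$.

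\textbf{Step 3 (Existence of the assignment).} Suppose $(\mathrm{ii})_m$ holds for every $m \in X$. Transferring back via $v$, we obtain $N_j \lessdot_M B_m$ for every $m$, and by Corollary \ref{transitivity} (absorbing the amenable $R_m$), $N_j \lessdot_M M_m^c$ for every $m \in X$. Iterating Lemma \ref{intersection lemma} over finitely many $m$'s and invoking Lemma \ref{infinite tensor lemma finite case} to pass to the infinite case, we conclude
$$N_j \lessdot_M \bigcap_{m \in X} M_m^c = M_0.$$
Since $M_0$ is amenable, this forces $N_j$ to be amenable, contradicting the hypothesis. Hence for each $j \in Y$ there exists $m \in X$ with $P_m \preceq_M N_j$, and we define $\sigma(j)$ to be any such $m$.

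\textbf{Step 4 (Injectivity of $\sigma$).} Suppose for contradiction that $\sigma(j_1) = \sigma(j_2) = m$ for distinct $j_1, j_2 \in Y$. From $P_m \preceq_M N_{j_1}$ and Lemma \ref{relative commutant}, $N_{j_1}^c \preceq_M B_m$; since $N_{j_2} \subset N_{j_1}^c$, also $N_{j_2} \preceq_M B_m$. Composing with $P_m \preceq_M N_{j_2}$ via the transitivity of Popa's intertwining, we obtain $P_m \preceq_M B_m$. The witnessing partial isometry necessarily lies in $p_m p_m' M p_m p_m' = P_m \ovt B_m$, so this reduces to $P_m \preceq_{P_m \ovt B_m} B_m$, contradicting Lemma \ref{uniqueness lemma} since $P_m$ is diffuse.

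\textbf{Main obstacle.} The technical heart is Step 3: combining the relative-injectivity conditions $N_j \lessdot_M M_m^c$ across all $m \in X$ through the two intersection lemmas, while carefully tracking the corner equivalences from Step 1 and verifying the large-centralizer conditions on the $B_m$. Step 4 is comparatively clean once one has transitivity of intertwining and the uniqueness lemma in hand.
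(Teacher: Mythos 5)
Your Steps 1--3 follow the paper's strategy for producing the assignment $\sigma$: apply the class-$\mathcal{P}$ dichotomy with $B$ equal to the ``complement'' of $P_m$, rule out the relative-injectivity alternative holding for every $m$ by combining Lemma \ref{intersection lemma} with Lemma \ref{infinite tensor lemma finite case}, and conclude that $N_j$ would be amenable. The structural difference is that you keep the corners $p_mp_m'$ throughout and move them to elementary tensors $q\otimes q'$ relative to $M=N_j\ovt N_j^c$, whereas the paper first reduces to $p_m=p_m'=1$ (replacing $P_m$ by a stably isomorphic copy sitting unitally in $M_m$, via Lemma \ref{stably isom for class P}) and then needs no partial isometries at all. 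Your route forces you to transport both $\preceq_M$ and $\lessdot_M$ across the conjugation by $v$ and across the passage from the corner $qN_jq$ back to $N_j$; these transfers are true but none of them is justified in your write-up, and the paper's preliminary reduction exists precisely to avoid this bookkeeping. I regard these as fixable omissions.

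Step 4, however, contains a genuine gap. You obtain $N_{j_2}\preceq_M B_m$ and $P_m\preceq_M N_{j_2}$ and then assert $P_m\preceq_M B_m$ ``via the transitivity of Popa's intertwining.'' The relation $\preceq_M$ is not transitive: $P_m\preceq_M N_{j_2}$ only produces a corner of $P_m$ carried by a partial isometry into some corner $fN_{j_2}f$, while $N_{j_2}\preceq_M B_m$ only produces some other, a priori unrelated, corner of $N_{j_2}$ embedding into $B_m$; without control of relative commutants (or a strengthened intertwining of the middle algebra) these two pieces of data cannot be composed. This is exactly the difficulty the paper's injectivity argument is built to circumvent: it uses \cite[Lemma 4.13]{HI15} to upgrade $P_{\sigma(j)}\preceq_M N_j$ to an honest spatial inclusion $vP_{\sigma(j)}v^*\subset vv^*N_jvv^*$ with $vv^*=qq'$, extracts a diffuse abelian subalgebra $A\subset qN_jq$ with $Aq'$ contained in $vM_{\sigma(j)}v^*$, transfers the second relation $P_{\sigma(j)}\preceq_M N_{j'}$ down to $A\preceq_M N_{j'}$ using \cite[Lemma 4.8]{HI15} (that lemma passes intertwining from an algebra to a subalgebra with expectation --- it does not compose two intertwinings), and only then applies Lemma \ref{uniqueness lemma}. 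As written, your Step 4 does not establish injectivity of $\sigma$; the intended contradiction $P_m\preceq_{P_m\ovt B_m}B_m$ is the right target, but the route to it must be replaced by an argument along the paper's lines.
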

\begin{proof}[Proof of Claim.]
Since $M_m$ is a factor, the map $M_m \ni x \mapsto xp_m' \in M_mp_m'$ is  isomorphic. So there is a decomposition $p_mM_mp_m = \widetilde{P}_m \ovt \widetilde{R}_m$ such that $\widetilde{P}_m p_m' = P_m$ and  $\widetilde{R}_mp_m' = R_m$ for all $m\in X$. 
For any $m\in X$ and $j \in Y$, we have that $\widetilde{P}_{m} \preceq_{M} N_j$ (where $\widetilde{P}_m \subset p_mMp_m$ is a unital subalgebra) if and only if $P_m = \widetilde{P}_{m}p_m' \preceq_{M} N_j$. To see this, use \cite[Remark 4.2(2)]{HI15} for the `if' direction, and \cite[Lemma 4.12(2)]{HI15} for the `only if' direction. 
Hence without loss of generality, we may assume $p_m'=1$ for all $m\in X$.

We next show that we may assume $p_m=1$ for all $m$. 
To see this, observe that if $p_m$ is equivalent to $1$ in $M_m$, then using a partial isometry $v_m \in M_m$ with $v_mv_m^*=1$ and $v_m^*v_m =p$, we have a decomposition $M_m = v_m P_mv_m^*\ovt v_mR_mv_m^*$. Then for any $j\in Y$, we have that $v_m P_mv_m^* \preceq_M N_j$ if and only if $P_m\preceq_M N_j$ (this holds by definition). 
If $p_m$ is a finite projection, it is easy to find a decomposition 
$M_m=\widetilde{P}_m \ovt R_m$ such that $\widetilde{P}_m$ is stably isomorphic to $P_m$ (hence in the class $\mathcal P$ by Lemma \ref{stably isom for class P}), and that there are projections $q_m\in P_m$ and $r_m \in \widetilde{P}_m$ satisfying $r_m\widetilde{P}_mr_m = q_mP_mq_m$. 
Then for any $j\in Y$, we have that $\widetilde{P}_m \preceq_M N_j$ if and only if $r_m\widetilde{P}_mr_m = q_mP_mq_m \preceq_M N_j$ by \cite[Remark 4.2(2) and 4.2(4)]{HI15}, and hence if and only if $P_m \preceq_M N_j$. 
Thus combined these observations together, to show this claim, we may assume $p_m=1$ for all $m$.

	Now we assume $p_m=p_m'=1$ for all $m\in X$. Fix $j\in Y$ and we will find $i\in X$ such that $P_i \preceq_{M} {N_j}$. For this, suppose by contradiction that $P_i \not\preceq_{M} {N_j}$ for all $i \in X$. By Lemma \ref{relative commutant}, this is equivalent to ${N}_j^c \not\preceq_{M} P_i' \cap M$ for all $i$, and by Corollary \ref{type of infinite tensor}, 
the factor $P_i' \cap M$ has a large centralizer (note that all $M_m=P_m\ovt R_m$ have large centralizers since so do all $P_m$ and $R_m$). 
Since $P_i$ is in the class $\mathcal P$, we have ${N}_j \lessdot_{M} P_i'\cap M$ for all $i\in X$. 
Observe that $P_i'\cap M = R_i\ovt M_i^c $ and $R_i \ovt M_i^c \lessdot_M M_i^c$ (use Corollary \ref{amenability for tensor product} as in the proof of Lemma \ref{stably isom for class P}).  Corollary \ref{transitivity} implies ${N}_j \lessdot_{M} M_i^c$ for all $i\in X$.  
Applying Lemma \ref{intersection lemma}, we can take intersections of $M_i^c$ for finitely many $i\in X$, that is, we have ${N}_j \lessdot_{M} M_{\mathcal{F}}^c$ for all finite subsets $\mathcal{F} \subset X$. We then apply Lemma \ref{infinite tensor lemma finite case} and get ${N}_j \lessdot_{M} M_{0}$. Since $ M_{0}$ is amenable, we conclude that ${N}_j$ is amenable which contradicts our assumption. 
Thus we have proved that for any $j\in Y$, there is $i \in X$ such that $P_i \preceq_{M} {N_j}$. We can then define a map $\sigma\colon Y \to X$ such that $P_{\sigma(j)} \preceq_{M} N_j$ for all $j\in Y$. 

	We next show that $\sigma$ is injective. Assume that $\sigma(j)=\sigma(j')$. By \cite[Lemma 4.13]{HI15}, take a partial isomrtry $v \in M$ such that $ v P_{\sigma(j)} v^* \subset vv^* N_j vv^*$ with expectation and that $vv^*=q \, q'$ for projections $q\in N_j$ and $q' \in N_j^c$. Since $vv^* N_j vv^* \simeq qN_jq$, we can find a diffuse abelian subalgebra $A \subset qN_jq$ with expectation such that $Aq' \subset  v M_{\sigma(j)} v^*$. 
Since $v^*v$ can be  also written by projections in $P_{\sigma(j)}$ and $P_{\sigma(j)}'\cap M$, since $P_{\sigma(j)} \preceq_{M}N_{j'}$ and since $P_{\sigma(j)}$ and $P_{\sigma(j)}'\cap M$ are factors, it holds that $v^*vP_{\sigma(j)}v^*v \preceq_{M}N_{j'}$ by \cite[Remark 4.2(4) and 4.5]{HI15}. Then consider the inclusion $v^*Aq'v \subset v^*vP_{\sigma(j)}v^*v$ and apply \cite[Lemma 4.8]{HI15}, so that $v^*Aq'v \preceq_{M}N_{j'}$. We get $Aq' \preceq_{M}N_{j'}$ and hence $A \preceq_{M}N_{j'}$ by \cite[Remark 4.2(2)]{HI15}. 
This implies $j=j'$ by Lemma \ref{uniqueness lemma} (if $j\neq j'$, we have $A \preceq_{M}N_{j}^c$, a contradiction), and we obtain  injectivity of $\sigma$.
\end{proof}

Now we start the proof. Assume that all $M_m$ are in the class $\mathcal{P}$ and that all $N_n$ are non-amenable. By the claim above, we can find an injective map $\sigma\colon Y \to X$ such that $M_{\sigma(j)} \preceq_{M} N_j$ for all $j\in Y$. This finishes the proof of the first part of the theorem.

	Next we assume that $N_j$ is semiprime for all $j\in Y$ and prove the surjectivity of $\sigma$. For each $j\in Y$, using \cite[Lemma 4.13]{HI15} and semiprimeness of $N_j$, there is a partial isometry $v_j \in M$ such that $v_jM_{\sigma(j)} v_j^*\ovt R_j = v_jv_j^*N_jv_jv_j^*$, where $R_j$ is an amenable factor. 
Observe that $v_jM_{\sigma(j)} v_j^*$ is in the class $\mathcal{P}$ for all $j\in Y$ by Lemma \ref{stably isom for class P}. 
By replacing the roles of $(M_m)_{m\in X}$, $(N_n)_{n\in Y}$ and $(P_m)_{m\in X}$ with $(N_n)_{n\in Y}$, $(M_m)_{m\in X}$ and $(v_{n}M_{\sigma(n)} v_{n}^*)_{n\in Y}$ respectively, we can apply the claim above and find an injective map $\tau\colon X\to Y$ such that $v_{\tau(i)}M_{\sigma(\tau(i))} v_{\tau(i)}^* \preceq_M M_i$ for all $i\in X$. As in the last part of the proof of the claim above, this implies $M_{\sigma(\tau(i))}  \preceq_M M_i$ for all $i\in X$. Then Lemma \ref{uniqueness lemma} implies $\sigma(\tau(i))=i$ for all $i\in X$, hence $\sigma$ is surjective.

	Finally, since the above $v_n$ is given as $v_nv_n^* = q_n q_n'$ and $v_n^*v_n = p_n p_n'$, where $p_n\in M_{\sigma(n)}$, $p_n' \in M_{\sigma(n)}^c$, $q_n\in N_n$, and $q_n'\in N_n^c$ are projections, there is an isomorphism
	$$p_nM_{\sigma(n)}p_n \ovt R_n \simeq q_nN_nq_n \quad \text{for all }n\in Y.$$
This finishes the proof.
\end{proof}

\begin{proof}[Proof of Corollary \ref{CorB}]
	Since all $N_j$ are prime, amenable factors $R_j$ in the last statement of Theorem \ref{ThmA} become type I factors. Hence if tensor product factors are isomorphic, then each tensor component is stably isomorphic. 

Conversely assume that each tensor component is stably isomorphic. For simplicity we assume that $M_n\ovt \B(\ell^2) = N_n \ovt \B(\ell^2)$ for all $n\in X=Y$. If $M_n$ and $N_n$ are properly infinite, then we have $M_n=N_n$, so we take any faithful normal state $\varphi_n$ on $M_n$ and $\psi_n$ on $N_n$ such that $\varphi_n$ and $\psi_n$ coincide via $M_n=N_n$. 
If $M_n$ is finite and $N_n$ is properly infinite, then we have $M_n\ovt \B(\ell^2)=N_n$. Take any product state $\varphi_n\ovt\omega$ on $M_n\ovt \B(\ell^2)$ and define $\psi_n$ on $N_n$ using $M_n\ovt \B(\ell^2)=N_n$. Define similarly if $M_n$ is properly infinite and $N_n$ is finite. 
Finally if both $M_n$ and $N_n$ are finite, we have $M_n\ovt \M_k(\C) = q_nN_nq_n \ovt \M_l(\C)$ for a nonzero projection $q_n\in N_n$ and $k,l\in \N$. By choosing appropriate $k,l\in \N$, we may assume that the trace value of $q_n$ is sufficiently close to 1. Define $\varphi_n$ and $\psi_n$ as traces on $M_n$ and $q_nN_nq_n$ respectively.
Summary we have the following isomorphism
	$$ \ovt_{n\in X}(M_n, \varphi_n) \ovt M_0= \ovt_{n\in Y}(q_n N_n q_n,\psi_n) \ovt N_0,  $$
where $M_0$ and $N_0$ are some Araki--Woods factors and $q_n\in N_n$ are projections (which is $1_{N_n}$ unless both $M_n$ and $N_n$ are finite). To consider the effect of $q_n$, for simplicity we assume that all $M_n$ and $N_n$ are $\rm II_1$ factors. Let $\tau_n$ be the trace for $N_n$. Observe that since we can control the value $\tau_n(q_n)$ for all $n$, we may assume that the element $q:=q_1\otimes q_2 \otimes q_3 \otimes \cdots$ defines a nonzero projection in $\ovt_{n\in Y}(N_n, \tau_n)$. Hence with a suitable choice of $(q_n)_n$, it is not hard to see that 
	$$  q\left(\ovt_{n\in Y}( N_n ,\tau_n) \right) q \simeq \ovt_{n\in Y}(q_n N_n q_n,\psi_n) .$$
Thus we obtain the desired stable isomorphism.
\end{proof}

\appendix 
\section{\bf Relative amenability for bimodules}\label{Relative amenability for bimodules}

	In this Appendix, we define and investigate relative amenability for bimodules. All of our studies are based on the work of Connes \cite{Co75} on amenability and the one of Anantharaman-Delaroche \cite{AD93} on co-amenability. 
Although most of our results here are straightforward generalizations, we give detailed proofs for the reader's convenience.

	Throughout the appendix, we use the following notation. 
For any von Neumann algebras $M$ and $B$, an \textit{$M$-$B$-module} $H={}_MH_{B}$ is a Hilbert space equipped with faithful normal unital $\ast$-homomorphisms $\pi_H\colon M \to \B(H)$ and $\theta_H \colon B^{\rm op}\to \B(H)$ such that $\pi_H(M)$ and $\theta_H(B^{\rm op})$ commute. 
All opposite items are denoted with circles, such as $B^\op=B^{\rm op}$. 
The conjugate module $\overline{H}$ is the conjugate Hilbert space of $H$ equipped with the $B$-$M$-module structure given by 
	$$\pi_{\overline{H}}(b)\theta_{\overline{H}}(x^\op)\overline{\xi}:= \overline{\pi(x^*)\theta((b^*)^\op)\xi}, \quad x\in M, \ b\in B, \ \xi \in H. $$
The set of all $B^\op$-module maps on $H$ will be denoted by $$\mathcal{L}_B(H_B):=\theta_H(B^\op)'\cap \B(H).$$
We always have $\pi_H(M) \subset \mathcal{L}_B(H_B)$.  
We denote by $\nu_H$ the $\ast$-homomorphism from the algebraic $\ast$-algebra generated by $M$ and $B^\op$ (i.e.\ the algebra $M\otimes_{\rm alg}B^\op$ with involution $(x\otimes b^\op)^*=x^* \otimes (b^*)^\op$) into the C$^\ast$-algebra generated by $\pi_H(M)$ and $\theta_H(B^\op)$. 
For $M$-$B$-modules $H$ and $K$, we will write $H \prec K$ if we have a \textit{weak containment}, that is, representations $\nu_H$ and $\nu_K$ satisfy $\|\nu_H(x)\|_\infty\leq \|\nu_K(x)\|_\infty$ for all $x\in M \ota B^\op$.

	Let $B \subset M$ be von Neumann algebras with operator valued weight, $p\in M$ a projection, and $A\subset pMp$ a von Neumann subalgebra with expectation. Consider $H=L^2(pM)$ as a $pMp$-$B$-module. Then we have 
	$$\mathcal{L}_B(H_B)=\theta_H(B^\op)'\cap \B(H) = p\langle M,B\rangle p$$
and the $B$-$pMp$-module $\overline{L^2(pM)}$ is canonically identified with the standard $B$-$pMp$-module $L^2(Mp)$, via the modular conjugation $J$ of $L^2(M)$: $\overline{L^2(M)}\ni\overline{\xi}\mapsto J\xi\in L^2(M)$.  
From these points of view, the study on bimodules in this appendix will be used in the study of relative amenability in Section \ref{Relative amenability for subalgebras}.

	The following definition is a generalization of \cite{PV11}, in which they treat only  finite von Neumann algebras. We introduce two notions of relative amenability which are equivalent for finite von Neumann algebras.
\begin{Def}\label{relative amenable def2}\upshape
	Let $B$ and $M$ be von Neumann algebras, $A\subset M$ a von Neumann subalgebra, and $H={}_MH_B$ an $M$-$B$-module. 
\begin{itemize}
	\item[$\rm (1)$] We say that $H={}_MH_B$ is \textit{left $A$-semidiscrete} if we have a weak containment
	$${}_M L^2(M)_A \prec {}_M H\otimes_B \overline{H}_A ,$$
where $\otimes_B$ is the Connes' relative tensor product (e.g.\ \cite[Chapter IX. \S 3]{Ta01}).
	\item[$\rm (2)$] Assume that $A\subset M$ is with expectation $E_A$. We say that ${}_MH_B$ is \textit{left $(A,E_A)$-injective} if there exists a conditional expectation 
	$$E\colon \mathcal{L}_B(H_B) \to \pi_H(A)\simeq A \quad \text{ such that  } \quad E(\pi_H(x))=E_A(x) \quad \text{for all } x\in M.$$
\end{itemize}
\end{Def}

Before starting our work on the relative amenability, we prepare several lemmas.

\begin{Lem}\label{lemma for reduction}
	Let $M,N$ be von Neumann algebras and ${}_MH_N$, ${}_MK_N$ be $M$-$N$-bimodules. Let $p\in M$, $q\in N$ be any projections such that central supports of $p$ in $M$ and $q$ in $N$ are $1_M$ and $1_N$ respectively. 
\begin{itemize}
	\item[$(1)$] There are canonical identifications 
	$${}_M L^2(Mp)\otimes_{pMp}(pH)_{N}\simeq{}_MH_N , \quad {}_M (Hq) \otimes_{qNq}L^2(qN)_N \simeq {}_MH_N.$$
	\item[$\rm(2)$] We have that ${}_MH_N \prec {}_MK_N$ if and only if ${}_{pMp}(pHq)_{qNq} \prec {}_{pMp}(pKq)_{qNq}$.
\end{itemize}
\end{Lem}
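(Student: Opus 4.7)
The plan is to prove the two assertions in order, with (2) reduced to (1).

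For (1), I would invoke the fact that when the central support of $p$ in $M$ is $1_M$, the $M$-$pMp$-bimodule $L^2(Mp)$ implements a Morita equivalence between $M$ and $pMp$. The concrete mechanism is a family of partial isometries $(v_\alpha)_\alpha \subset M$ with the $v_\alpha v_\alpha^* \leq p$ pairwise orthogonal and $\sum_\alpha v_\alpha^* v_\alpha = 1_M$; such a family exists precisely because the central support of $p$ equals $1_M$. I would then define $\Phi \colon L^2(Mp) \otimes_{pMp} (pH) \to H$ on right-bounded vectors $\xi \in L^2(Mp)$ and $p\eta \in pH$ by $\Phi(\xi \otimes p\eta) = L_\xi(p\eta)$ where $L_\xi \colon L^2(pMp) \to L^2(Mp)$ extends canonically to act on $pH$ (by the $pMp$-linear operator on $L^2(pMp)$ that it induces). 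Well-definedness on the Connes tensor product follows from the defining inner product $\langle \xi \otimes \eta, \xi' \otimes \eta'\rangle = \langle L_{\xi'}^* L_\xi \eta, \eta'\rangle$, and surjectivity is obtained by writing an arbitrary $\zeta \in H$ as $\sum_\alpha \pi_H(v_\alpha^*)(\pi_H(p v_\alpha)\zeta)$, using that $v_\alpha^* \in L^2(Mp)$ are bounded and $\pi_H(pv_\alpha)\zeta \in pH$. The second identification is entirely symmetric.

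For the forward direction of (2), any $x \in pMp \ota (qNq)^\op$ satisfies $\nu_H(x) = \pi_H(p)\nu_H(x)\theta_H(q^\op)$, so $\nu_H(x)$ decomposes as $\nu_{pHq}(x) \oplus 0$ on $H = pHq \oplus (pHq)^\perp$; in particular $\|\nu_H(x)\|_\infty = \|\nu_{pHq}(x)\|_\infty$, and the identical identity for $K$ then gives $H \prec K \Rightarrow pHq \prec pKq$.

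For the reverse direction, I would apply (1) twice to obtain
\[ {}_MH_N \simeq {}_M L^2(Mp) \otimes_{pMp} (pHq) \otimes_{qNq} L^2(qN)_N \]
and analogously for $K$. The conclusion then reduces to the general principle that Connes' relative tensor product preserves weak containment on each side: whenever ${}_{pMp}X_{qNq} \prec {}_{pMp}Y_{qNq}$, one has ${}_M L^2(Mp)\otimes_{pMp} X \otimes_{qNq} L^2(qN)_N \prec {}_M L^2(Mp)\otimes_{pMp} Y \otimes_{qNq} L^2(qN)_N$. This is standard in Connes' correspondence formalism and is used in \cite{AD93}; the key point is that for any pair of bounded vectors $\xi \in L^2(Mp)$, $\zeta \in L^2(qN)$, the map $\eta \mapsto \xi \otimes \eta \otimes \zeta$ is bounded with norm independent of the middle bimodule, so the $C^*$-norm on $M \ota N^\op$ induced from the outer triple product is dominated by the one induced from the middle bimodule alone.

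The main obstacle I anticipate is making the last step fully rigorous: verifying the uniform-in-bimodule boundedness of the outer tensoring operation so that the weak containment at the middle cleanly transports to a weak containment at the outer level. Once this boundedness is in place, the $C^*$-norm inequality $\|\nu_H(x)\|_\infty \leq \|\nu_K(x)\|_\infty$ for arbitrary $x \in M \ota N^\op$ follows mechanically by plugging in the isomorphisms of (1).
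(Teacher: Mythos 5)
Your proposal is correct and follows essentially the same route as the paper: part (2) is handled identically (the forward direction by observing that $\nu_H(x)$ is supported on the corner $pHq$ for $x\in pMp\otimes_{\rm alg}(qNq)^{\rm op}$, and the reverse direction by applying (1) on both sides and invoking the stability of weak containment under Connes' relative tensoring, which is exactly \cite[Lemma 1.7]{AD93} as cited in the paper). The only divergence is in (1), where you construct the identification explicitly from a family of partial isometries witnessing that the central support of $p$ is $1_M$, whereas the paper obtains it in one line from associativity of the relative tensor product via $pH\simeq L^2(pM)\otimes_M H$ and $L^2(Mp)\otimes_{pMp}L^2(pM)\simeq L^2(M)$; this is a presentational difference only (though your surjectivity step should be phrased with right-bounded vectors of the form $v_\alpha^*\xi$ rather than treating $v_\alpha^*$ itself as an element of $L^2(Mp)$).
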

\begin{proof}
We note that the left $M$-action on $L^2(Mp)$ is faithful if and only if the central support projection of $p$ in $M$ is $1_M$. 

\noindent
	(1) It is easy to see that ${}_ML^2(Mp)\otimes_{pMp}L^2(pM)_M \simeq {}_ML^2(M)_M$. We have 
	$${}_M L^2(Mp)\otimes_{pMp}(pH)_{N}\simeq{}_M L^2(Mp)\otimes_{pMp}L^2(pM)\otimes_M H_N \simeq {}_ML^2(M)\otimes_{M}H_{N}\simeq{}_MH_N .$$
The same argument works for ${}_M (Hq) \otimes_{qNq}L^2(qN)_N$.

\noindent
	(2) The only if direction is trivial. To see the if part, using \cite[Lemma 1.7]{AD93}, apply ${}_M L^2(Mp)\otimes_{pMp}$ from the left and $\otimes_{qNq}L^2(qN)$ from the right side.
\end{proof}

\begin{Lem}\label{The Trick}
	Let $M,N$ be von Neumann algebras and ${}_MH_N$, ${}_MK_N$ be $M$-$N$-bimodules. If ${}_MH_N\prec {}_MK_N$, then there is a ucp map  $\Psi\colon\mathcal{L}_N(K_N)\to \mathcal{L}_N(H_N)$ such that $\Psi(\pi_K(x))=\pi_H(x)$ for all $x\in M$.
\end{Lem}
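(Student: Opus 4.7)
The plan is to invoke the standard \emph{Arveson extension plus multiplicative domain} argument, which in the tracial case appears in \cite{AD93} and goes through verbatim here once the bimodule structure is handled correctly.

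First, I unpack the definition of weak containment. By hypothesis, the map
\[
\nu_K(x) \longmapsto \nu_H(x), \qquad x \in M \ota N^{\rm op},
\]
is well defined and contractive, hence extends to a $\ast$-homomorphism
\[
\rho_0 \colon C^*(\nu_K(M \ota N^{\rm op})) \longrightarrow C^*(\nu_H(M \ota N^{\rm op})) \subset \B(H)
\]
between the C$^*$-algebras generated in $\B(K)$ and $\B(H)$ respectively, sending $\pi_K(x) \mapsto \pi_H(x)$ and $\theta_K(b^\op) \mapsto \theta_H(b^\op)$.

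Next, by Arveson's extension theorem I lift $\rho_0$ to a ucp map $\widetilde{\rho}\colon \B(K) \to \B(H)$. A priori $\widetilde{\rho}$ has no reason to send $\mathcal{L}_N(K_N)$ into $\mathcal{L}_N(H_N)$, but this is exactly what the multiplicative domain delivers: since $\widetilde{\rho}$ is multiplicative on $\theta_K(N^{\rm op})$, every element of $\theta_K(N^{\rm op})$ lies in the multiplicative domain of $\widetilde{\rho}$. Hence for any $a \in \mathcal{L}_N(K_N) = \theta_K(N^{\rm op})'$ and any $b \in \theta_K(N^{\rm op})$, using $ab=ba$,
\[
\widetilde{\rho}(a)\,\theta_H(b^\op) \;=\; \widetilde{\rho}(a)\widetilde{\rho}(b) \;=\; \widetilde{\rho}(ab) \;=\; \widetilde{\rho}(ba) \;=\; \widetilde{\rho}(b)\widetilde{\rho}(a) \;=\; \theta_H(b^\op)\,\widetilde{\rho}(a),
\]
so $\widetilde{\rho}(a) \in \theta_H(N^{\rm op})' = \mathcal{L}_N(H_N)$. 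I then define $\Psi := \widetilde{\rho}|_{\mathcal{L}_N(K_N)}$, which is ucp with range in $\mathcal{L}_N(H_N)$ and satisfies $\Psi(\pi_K(x)) = \pi_H(x)$ for all $x \in M$ by construction.

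The only subtle point to keep in mind is that $\rho_0$ is only defined on the C$^*$-algebra generated by $\nu_K(M \ota N^{\rm op})$, not on all of $\mathcal{L}_N(K_N)$; Arveson's theorem is invoked precisely to bypass this, and the multiplicative domain calculation then recovers the $N^{\rm op}$-bimodularity needed to land in $\mathcal{L}_N(H_N)$. No step here is really an obstacle — it is a clean two-line argument — the main care is in disentangling the left/right actions and in noting that faithfulness of $\pi_K, \theta_K, \pi_H, \theta_H$ ensures $\rho_0$ is well defined on the abstract $\ast$-algebra quotient.
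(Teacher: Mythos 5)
Your proposal is correct and takes essentially the same route as the paper: extend the $\ast$-homomorphism implementing the weak containment by Arveson's theorem to a ucp map, then use the multiplicative domain of $\theta_K(N^{\rm op})$ to conclude that the image of $\mathcal{L}_N(K_N)=\theta_K(N^{\rm op})'$ lands in $\theta_H(N^{\rm op})'=\mathcal{L}_N(H_N)$. The only cosmetic difference is that you extend to all of $\B(K)$ whereas the paper extends only to $\mathrm{C}^*\{\mathcal{L}_N(K_N),\theta_K(N^{\rm op})\}$; both work.
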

\begin{proof}
	Let $\nu$ be the bounded $\ast$-homomorphism for ${}_MH_N\prec {}_MK_N$, namely, it sends $\nu_K(x)$ into $\nu_H(x)$. 
The map $\nu$ is naturally defined on $\mathrm{C}^*\{ \pi_K(M), \theta_K(N^\op )\}$ and,  by Arveson's extension theorem, we extend it on $\mathrm{C}^*\{ \mathcal{L}_N(K_N), \theta_K(N^\op ) \}\subset \B(K)$ as a u.c.p.\ map into $\B(H)$. We denote by $\widetilde{\nu}$ this u.c.p.\ extension and then define $\Psi\colon \mathcal{L}_N(K_N) \to \B(H)$ by $\Psi(T):=\widetilde{\nu}(T)$. 
Obviously $\Psi(\pi_K(x))=\pi_H(x)$ for $x\in M$. We have to show that $\mathrm{Im}\Psi\subset \mathcal{L}_N(H_N)$, which means $\mathrm{Im}\Psi$ commutes with $\theta_H(N^\op)$. 
For any $u\in \mathcal{U}(N)$ and $T\in \mathcal{L}_N(K_N)$, since $\theta_K(u^\op)$ is contained in the multiplicative domain of $\nu$ (e.g.\ \cite[Proposition 1.5.7]{BO08}), we have 
	$$\Psi(T)\theta_H(u^\op)=\widetilde{\nu}(T)\nu(\theta_K(u^\op))
=\widetilde{\nu}(T\theta_K(u^\op))=\widetilde{\nu}(\theta_K(u^\op)T)=\theta_H(u^\op)\Psi(T).$$ 
Hence $\Psi(T)$ commutes with $\theta_H(u^\op)$ for all $u\in \mathcal{U}(N)$, and $\Psi$ is a desired ucp map.
\end{proof}

\begin{Lem}\label{GNS lemma}
	Let $M,N$ be von Neumann algebras and ${}_MH_N$, ${}_MK_N$ be $M$-$N$-bimodules. Assume that there is a cyclic vector $\xi\in {}_MH_N$, that is, $\pi_H(M)\theta_H(N^\op)\xi \subset H$ is dense. 
Then ${}_MH_N\prec {}_MK_N$ if and only if the linear functional
	$$\B(K)\supset\text{ \rm $\ast$-alg} \{ \pi_K(M),\theta_K(N^\op)\} \ni \pi_K(x)\theta_K(y^\op) \mapsto \langle \pi_H(x)\theta_H(y^\op) \xi, \xi \rangle_H \in \C$$
is bounded (with respect to the norm in $\B(K)$).
\end{Lem}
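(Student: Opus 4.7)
The plan is to prove both directions through the standard GNS correspondence between positive linear functionals and cyclic representations, the cyclicity of $\xi$ being the bridge that lets a single vector state encode the full representation $\nu_H$.

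For the easy direction, assume ${}_MH_N \prec {}_MK_N$. By definition, $\|\nu_H(x)\|_\infty \le \|\nu_K(x)\|_\infty$ for every $x\in M\ota N^\op$, so the assignment $\nu_K(x)\mapsto \nu_H(x)$ is well defined and contractive on the dense $\ast$-subalgebra $\nu_K(M\ota N^\op)$ of $A:=C^*\{\pi_K(M),\theta_K(N^\op)\}$, hence extends to a $\ast$-homomorphism $\rho\colon A\to \B(H)$. Composing $\rho$ with the vector functional $\langle\,\cdot\,\xi,\xi\rangle$ on $\B(H)$ gives a bounded linear functional on $A$ whose restriction to the $\ast$-algebra generated by $\pi_K(M)$ and $\theta_K(N^\op)$ is exactly $\pi_K(x)\theta_K(y^\op)\mapsto \langle \pi_H(x)\theta_H(y^\op)\xi,\xi\rangle$, giving boundedness.

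For the converse, suppose the displayed functional, call it $\omega_\xi$, is bounded on the $\ast$-algebra $\nu_K(M\ota N^\op)\subset A$. First I would check that $\omega_\xi$ is positive: for $x\in M\ota N^\op$,
\[
\omega_\xi(\nu_K(x)^*\nu_K(x)) = \omega_\xi(\nu_K(x^*x)) = \langle \nu_H(x^*x)\xi,\xi\rangle = \|\nu_H(x)\xi\|^2 \ge 0.
\]
Thus $\omega_\xi$ extends to a bounded positive linear functional on all of $A$. Applying the GNS construction to this functional yields a representation $\pi\colon A\to \B(H_\omega)$ with a cyclic vector $\xi_\omega$ satisfying $\omega_\xi(a)=\langle \pi(a)\xi_\omega,\xi_\omega\rangle$ for $a\in A$.

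The key step is then to build a unitary $U\colon H_\omega\to H$ by
\[
U\,\pi(\nu_K(x))\xi_\omega := \nu_H(x)\xi, \qquad x\in M\ota N^\op.
\]
This is isometric because $\|\pi(\nu_K(x))\xi_\omega\|^2=\omega_\xi(\nu_K(x^*x))=\|\nu_H(x)\xi\|^2$. Cyclicity of $\xi_\omega$ for $\pi$ makes the domain dense; cyclicity of $\xi$ for $\nu_H$ (the running hypothesis) makes the range dense — this is where the cyclic vector assumption becomes essential. So $U$ extends to a unitary satisfying $U\pi(\nu_K(x))U^* = \nu_H(x)$ on $H$, giving $\|\nu_H(x)\|_\infty \le \|\pi(\nu_K(x))\|_\infty \le \|\nu_K(x)\|_\infty$, which is precisely ${}_MH_N\prec {}_MK_N$.

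The only mildly delicate point is making sure the GNS construction applies: boundedness together with positivity of $\omega_\xi$ on the dense $\ast$-subalgebra extends uniquely to a bounded positive functional on $A$ by continuity, and then standard GNS machinery applies. Everything else is bookkeeping once cyclicity is invoked at the correct moment.
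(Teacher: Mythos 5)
Your proof is correct and follows essentially the same route as the paper: the forward direction is immediate, and for the converse you verify positivity of the vector functional, extend it to the C$^*$-algebra generated by $\pi_K(M)$ and $\theta_K(N^\op)$, and use cyclicity of $\xi$ to identify the GNS Hilbert space with $H$ and the GNS representation with $\nu_K(x)\mapsto\nu_H(x)$, whence boundedness of the latter. The paper compresses exactly this argument into three sentences.
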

\begin{proof}
	The `only if' part is trivial. For the converse, observe that the given linear functional is positive on the $\ast$-algebra, so it can be extended on $\mathrm{C}^*\{ \pi_K(M),\theta_K(N^\op)\}$ as a positive linear functional. 
Then since $\xi$ is cyclic, the Hilbert space of the GNS representation of this functional is identified as $H$. In particular the GNS representation is identified with the $\ast$-homomorphism $\pi_K(x)\theta_K(y^\op) \mapsto \pi_H(x)\theta_H(y^\op)$, and hence it is bounded.
\end{proof}

\subsection*{\bf Characterizations of left injectivity/semidiscreteness}

	We start our work with proving well-known characterizations of relative amenability, which are generalizations of a part of \cite[Theorem 5.1]{Co75} and \cite[Section 3]{AD93}. For finite von Neumann algebras, they are proved in \cite[Theorem 2.1]{OP07} and \cite[Proposition 2.4]{PV11}. 

For this we prepare a lemma. We note that the condition (2) in this lemma is more general than the relative injectivity, which corresponds to the case $N=\mathcal{L}_B(H_B)$.
 
\begin{Lem}\label{relative amenable theorem1}
	Let $A\subset M$ be $\sigma$-finite von Neumann algebras with expectation $E_A$, and let $N$ be a von Neumann algebra containing $M$. Assume that $A$ is finite. We fix a trace $\tau_A$ on $A$ and put $\psi:=\tau_A\circ E_A$. Then the following conditions are equivalent.
	\begin{itemize}
		\item[$\rm(1)$] There is an $A$-central state $\widetilde{\psi}$ on $N$ such that $\widetilde{\psi}\mid_{M}=\psi$.
		\item[$\rm(2)$] There is a conditional expectation from $N$ onto $A$, which restricts to $E_A$ on $M$.
		\item[$\rm(3)$] There is a net $(\xi_i)_i$ of unit vectors in the positive cone of $L^2(N)$ such that 
$$\langle x \xi_i , \xi_i\rangle\rightarrow \psi(x) \quad \text{for all }x\in M \quad\text{and}\quad \|uJ_NuJ_N\xi_i-\xi_i\|_2\rightarrow 0 \quad \text{for all }u\in \mathcal{U}(A),$$ 
where $J_N$ is the modular conjugation for $L^2(N)$.
	\end{itemize}
\end{Lem}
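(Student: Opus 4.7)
The plan is to prove the three equivalences in the order (2)$\Rightarrow$(1), (1)$\Rightarrow$(2), (3)$\Rightarrow$(1), (1)$\Rightarrow$(3), exploiting the finiteness of $A$ at two crucial places and a Day-type convexity argument at the last step.

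For (2)$\Rightarrow$(1), I would simply set $\widetilde{\psi} := \tau_A \circ E$, where $E \colon N \to A$ is the given conditional expectation. Since $E$ is $A$-bimodular and $\tau_A$ is a trace, $\widetilde{\psi}$ is $A$-central, and $\widetilde{\psi}|_M = \tau_A \circ E_A = \psi$ by assumption. For (1)$\Rightarrow$(2) I would use the finiteness of $A$ as follows: given the $A$-central state $\widetilde{\psi}$ with $\widetilde{\psi}|_A = \tau_A$, for each $x \in N$ consider the bounded linear functional $\omega_x \colon A \to \C,\ a \mapsto \widetilde{\psi}(ax)$. Using $A$-centrality together with the Cauchy--Schwarz inequality, one obtains
\[
|\omega_x(p)|^2 \leq \widetilde{\psi}(p^{1/2}xx^*p^{1/2})\, \widetilde{\psi}(p) \leq \|x\|^2 \tau_A(p)^2
\]
for every projection $p \in A$, from which $\omega_x$ is $\tau_A$-absolutely continuous and hence $\sigma$-weakly continuous on $A$. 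Since $\tau_A$ is a faithful normal tracial state on the finite algebra $A$, by the noncommutative Radon--Nikodym theorem there is a unique $E(x) \in A$ with $\tau_A(a E(x)) = \widetilde{\psi}(ax)$ for all $a \in A$. It is then routine to check that $E$ is a unital, positive, $A$-bimodular, hence (by Tomiyama) conditional expectation $N \to A$, and $E|_M = E_A$ follows from the uniqueness and the defining identity combined with $\widetilde{\psi}|_M = \tau_A \circ E_A$.

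For (3)$\Rightarrow$(1), take any weak-$\ast$ cluster point $\widetilde{\psi}$ of the states $\langle\, \cdot\, \xi_i, \xi_i\rangle$. The first convergence ensures $\widetilde{\psi}|_M = \psi$. For $A$-centrality, multiplying $uJ_N u J_N \xi_i - \xi_i \to 0$ by $u^*$ gives $u^*\xi_i \approx J_N u J_N \xi_i$; since $J_N u J_N \in N'$ commutes with every $y \in N$, one computes
\[
\langle u y u^* \xi_i, \xi_i\rangle = \langle y u^*\xi_i, u^*\xi_i\rangle \approx \langle y J_N u J_N \xi_i, J_N u J_N \xi_i\rangle = \langle y \xi_i, \xi_i\rangle,
\]
whence $\widetilde{\psi}(u y u^*) = \widetilde{\psi}(y)$ for all $u \in \mathcal{U}(A)$ and all $y \in N$.

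The main obstacle is (1)$\Rightarrow$(3), which requires a Day-type convexity trick to upgrade weak-$\ast$ approximation into $L^2$-norm approximation. Here is the approach I would take: for any finite $F \subset M$, finite $G \subset \mathcal{U}(A)$, and $\varepsilon > 0$, consider in the Banach space $M_* \oplus \bigoplus_{u \in G} N_*$ the convex set
\[
C := \bigl\{\, \bigl(\langle\, \cdot\, \xi, \xi\rangle|_M,\ (\langle\, \cdot\, \xi, \xi\rangle - \langle\, \cdot\, (uJ_N u J_N)\xi, (uJ_N u J_N)\xi\rangle)_{u \in G}\bigr) : \xi \in L^2(N)_+,\ \|\xi\|_2 = 1 \,\bigr\}.
\]
Using the cyclic and separating nature of the positive cone together with the $A$-central state $\widetilde{\psi}$ from (1), one shows that the element $(\psi|_M, 0, \ldots, 0)$ lies in the weak closure of $C$: the vector states in the positive cone are weak-$\ast$ dense in the normal states on $N$, and the $A$-centrality of $\widetilde{\psi}$ forces the second coordinate to be approximable by $0$. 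Since $C$ is convex, its weak and norm closures coincide by Hahn--Banach, so one obtains a unit vector $\xi \in L^2(N)_+$ with $|\langle x \xi, \xi\rangle - \psi(x)| < \varepsilon$ for $x \in F$ and $\|\langle\, \cdot\, \xi, \xi\rangle - \langle\, \cdot\, (uJ_N u J_N)\xi, (uJ_N u J_N)\xi\rangle\| < \varepsilon^2$ for $u \in G$. The Powers--Størmer inequality applied to vectors in the positive cone then converts the latter norm estimate into $\|uJ_N u J_N \xi - \xi\|_2 < \varepsilon$, producing the desired net by indexing over $(F, G, \varepsilon)$. The delicate point is precisely this last conversion and the verification that $(\psi|_M, 0)$ lies in the weak closure of $C$, for which the normality of $\tau_A$ on $A$ and the approximation of $\widetilde{\psi}$ on $M$ by normal vector states are essential.
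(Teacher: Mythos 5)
Your proof is correct and follows essentially the same route as the paper: Radon--Nikodym against $\tau_A$ for (1)$\Rightarrow$(2), composition for (2)$\Rightarrow$(1), a weak-$\ast$ cluster point for (3)$\Rightarrow$(1), and the Day/Hahn--Banach convexity trick followed by Powers--St{\o}rmer for (1)$\Rightarrow$(3). The only point you leave implicit is the convexity of your set $C$, which is not obvious from its parametrization by vectors but holds because $C$ is the image of the convex set of normal states of $N$ under the affine map $\omega\mapsto\bigl(\omega|_M,(\omega-u\omega u^*)_{u\in G}\bigr)$, using that every normal state has a unique positive-cone vector representative and that $\omega_{uJ_NuJ_N\xi}=u\omega_\xi u^*$.
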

\begin{proof}
The proof is almost identical to the one of \cite[Theorem 2.1]{OP07}. 	Hence we give a sketch of the proof.

\noindent
	(1) $\Rightarrow$ (2) Using the $A$-centrality, we have
$$|\widetilde{\psi}(ax)|\leq \|a\|_{1,\tau_A} \|x\|_\infty \quad \text{for all } a\in A, \ x\in N.$$
For any $x\in N$, define a functional $T_x\colon A\to \C$ by $T_x(a):=\widetilde{\psi}(ax)$. This is bounded on $L^1(A,\tau_A)$ and so there is a unique element $\Phi(x)\in A$ such that $\tau_A(a\Phi(x))=\widetilde{\psi}(ax)$ for all $a\in A$. This $\Phi$ is a desired conditional expectation.

\noindent
	(2) $\Rightarrow$ (1) Compose $\psi$ with the given conditional expectation.

\noindent
	(1) $\Rightarrow$ (3) Let $(\psi_i)_i$ be a net of normal states on $N$ converging to $\widetilde{\psi}$ weakly. This satisfies that for any $u\in\mathcal{U}(A)$, the net $(u\psi_iu^*- \psi_i)_i$ converges to zero weakly, where $u\psi_i u^*(x):=\psi_i(u^*xu)$ for $x\in N$. So by the Hahn--Banach separation theorem, one has that, up to replacing with convex combinations, the net $(u\psi_iu^*- \psi_i)_i$ converges to zero in the norm topology of $N_*$ for all $u\in \mathcal{U}(A)$. 
For each $i$, let $\xi_i$ be the unit vector in the positive cone of $L^2(N)$ such that the vector state of $\xi_i$ is $\psi_i$. Then the Powers--St$\rm \o$rmer inequality \cite[Theorem IX.1.2(iv)]{Ta01} shows that 
$$\|uJ_NuJ_N\xi_i-\xi_i\|_2^2 \leq \|u\psi_iu^*- \psi_i\|_{N_*} \rightarrow 0 \quad \text{for all } u\in \mathcal{U}(A).$$

\noindent
	(3) $\Rightarrow$ (1) Define a state on $N$ by $\widetilde{\psi}(x):=\mathrm{Lim}_i\langle x\xi_i , \xi_i \rangle$. 
\end{proof}

\begin{Thm}\label{relative amenable theorem2}
	Let $B,M,A,E_A$ and ${}_MH_B$ as in Definition \ref{relative amenable def2}(2) and consider the following conditions. 
	\begin{itemize}
		\item[$(1)$] The bimodule ${}_MH_B$ is left $(A,E_A)$-semidiscrete. 
		\item[$(2)$] There is a $B$-$A$-bimodule $K$ such that ${}_{M}L^2(M)_A \prec {}_{M} H\otimes_B K_A$.
		\item[$(3)$] There is a ucp map $\Psi\colon \mathcal{L}_B(H_B) \to \langle M, A \rangle $ such that $\Psi(\pi_H(x))=x$ for all $x\in M$.
		\item[$(4)$] The bimodule ${}_MH_B$ is left $(A,E_A)$-injective.
	\end{itemize}
Then we have $(1)$ $\Rightarrow$ $(2)$ $\Rightarrow$ $(3)$ $\Rightarrow$ $(4)$. If we further assume $A$ is semifinite, then $(4)$ $\Rightarrow$ $(1)$ holds, so all conditions are equivalent.
\end{Thm}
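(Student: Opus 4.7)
The easy implications (1)$\Rightarrow$(2)$\Rightarrow$(3)$\Rightarrow$(4) will follow from direct constructions; the only serious step is (4)$\Rightarrow$(1), where the semifiniteness of $A$ will enter essentially through Lemma~\ref{relative amenable theorem1}. For (1)$\Rightarrow$(2), I would simply take $K:=\overline{H}$ equipped with its $B$-$A$-bimodule structure, obtained by restricting the canonical right $M$-action to $A$. For (2)$\Rightarrow$(3), apply Lemma~\ref{The Trick} to the weak containment of (2) to produce a ucp map
\[
\Psi_0\colon \mathcal{L}_A((H\otimes_B K)_A)\to \mathcal{L}_A(L^2(M)_A)=\langle M,A\rangle
\]
with $\Psi_0(\pi_{H\otimes_B K}(x))=x$ for $x\in M$. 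The canonical unital normal $\ast$-homomorphism $\iota\colon\mathcal{L}_B(H_B)\to\mathcal{L}_A((H\otimes_B K)_A)$, $T\mapsto T\otimes_B 1_K$, extends $\pi_H|_M$ and lands in the commutant of the right $A$-action (which only acts on the $K$-factor), so $\Psi:=\Psi_0\circ\iota$ satisfies (3). For (3)$\Rightarrow$(4), the Jones basic construction for $A\subset M$ with expectation $E_A$ provides a canonical normal conditional expectation $F\colon\langle M,A\rangle\to A$, defined through the Jones projection $e_A$ by the identity $F(T)e_A=e_AT e_A$ (using $e_A\langle M,A\rangle e_A=Ae_A$), and whose restriction to $M$ is $E_A$; the composition $E:=F\circ\Psi$ then does the job.

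The remaining implication (4)$\Rightarrow$(1), under the assumption that $A$ is semifinite, is the main obstacle. Fix a faithful normal semifinite trace $\Tr$ on $A$. My plan is to localize to $\Tr$-finite projections of $A$, invoke Lemma~\ref{relative amenable theorem1}, and then patch the resulting almost-central vectors together. For each $\Tr$-finite projection $p\in A$, the map $T\mapsto pE(T)p$ yields a conditional expectation from $p\mathcal{L}_B(H_B)p$ onto $pAp$ extending $pE_A(\cdot)p$ on $pMp$, and since $pAp$ is finite, the implication (2)$\Rightarrow$(3) of Lemma~\ref{relative amenable theorem1} produces a net of unit vectors $(\xi_i^p)_i$ in the positive cone of $L^2(p\mathcal{L}_B(H_B)p)$ whose vector states approach $\tau_{pAp}\circ E_A$ on $pMp$ and which are almost $pAp$-central. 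The crucial identification is that $L^2(p\mathcal{L}_B(H_B)p)$ embeds as a sub-$(pMp)$-$(pAp)$-bimodule of $p(H\otimes_B\overline{H})p$, since $H\otimes_B\overline{H}$ realizes a (possibly amplified) copy of the standard $\mathcal{L}_B(H_B)$-bimodule. Transferring $(\xi_i^p)_i$ along this embedding, letting $p$ range over an increasing net of $\Tr$-finite projections with $p\nearrow 1_A$ (and appealing to Lemma~\ref{lemma for reduction} to avoid central-support pathologies), and finally applying Lemma~\ref{GNS lemma} to promote the resulting algebraic positive functional to a bounded one, gives the desired weak containment ${}_ML^2(M)_A\prec {}_MH\otimes_B\overline{H}_A$, which is exactly (1). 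The hardest part will be the precise identification of $L^2(\mathcal{L}_B(H_B))$ as a sub-bimodule of $H\otimes_B\overline{H}$ and the uniform control of the approximation as $p\nearrow 1_A$; without semifiniteness of $A$ this localization is unavailable, which explains the assumption.
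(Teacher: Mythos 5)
Your implications (1)$\Rightarrow$(2)$\Rightarrow$(3)$\Rightarrow$(4) match the paper exactly (take $K=\overline H$; apply Lemma \ref{The Trick} and compose with $T\mapsto T\otimes_B 1_K$; compress by the Jones projection $e_A$), and your overall strategy for (4)$\Rightarrow$(1) is also the paper's: compress to $\Tr$-finite corners $pAp$, invoke Lemma \ref{relative amenable theorem1} with $N=\mathcal L_B(H_B)$ there, and use Sauvageot's identification $L^2(\mathcal L_B(H_B))\simeq H\otimes_B\overline H$ (this is exactly the standard form, not merely an amplified copy). In the finite case this does yield the weak containment ${}_ML^2(M,\psi)_A\prec{}_MH\otimes_B\overline H_A$ via Lemma \ref{GNS lemma}, with $\xi_\psi$ the cyclic vector of $\psi=\tau_A\circ E_A$.

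The gap is in your final limiting step for semifinite $A$. You propose to ``patch the $(\xi_i^p)_i$ together'' as $p\nearrow 1_A$ and then apply Lemma \ref{GNS lemma} to ``the resulting algebraic positive functional,'' but when $A$ is only semifinite the weight $\psi=\Tr_A\circ E_A$ is not a state, $L^2(M,\psi)$ carries no canonical cyclic vector of the kind Lemma \ref{GNS lemma} requires, and there is no single bounded positive functional for the vectors $\xi_i^p$ to converge to; what you have is only a family of functionals indexed by the corners. The paper sidesteps this entirely: it lets $\nu$ be the algebraic $\ast$-homomorphism witnessing the desired weak containment and proves directly that $\nu$ is bounded. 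Namely, for each finite, $\sigma$-finite $p\in A$ the corner result (semidiscreteness of ${}_{pMp}(pH)_B$) gives, with $p_H:=\pi_H(p)\theta_H(p^{\rm op})$ and $\tilde p:=pp^{\rm op}$,
$$\|\tilde p\,\nu(x)\,\tilde p\|_\infty=\|\nu(p_Hxp_H)\|_\infty\leq\|p_Hxp_H\|_\infty\leq\|x\|_\infty,$$
and then $\|\nu(x)\|_\infty=\sup_i\|\tilde p_i\,\nu(x)\,\tilde p_i\|_\infty\leq\|x\|_\infty$ for a net $p_i\nearrow 1_A$ of finite, $\sigma$-finite projections. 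You should replace your patching-plus-GNS step by this norm estimate on $\nu$; as written, that step does not go through.
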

\begin{proof}
	The implication (1) $\Rightarrow$ (2) is trivial and (2) $\Rightarrow$ (3) follows from Lemma \ref{The Trick}. 
To see (3) $\Rightarrow$ (4), apply the compression map by the Jones projection $e_A$ of $E_A$.

\noindent
(4) $\Rightarrow$ (1)
	Assume first that $A$ is finite with a faithful normal trace $\tau_A$. Put $\psi:=\tau_A\circ E_A$. We apply Lemma \ref{relative amenable theorem1} below for $N:=\mathcal{L}_B(H_B)$, and get a net of unit vectors $(\xi_i)_i\in L^2(N)$ such that $\langle \pi_H(x)\xi_i , \xi_i\rangle\rightarrow \psi(x)$ for all $x\in M$ and $\|\pi_H(u)J\pi_H(u)J\xi_i-\xi_i\|_2\rightarrow 0$ for all $u\in \mathcal{U}(A)$, where $J$ is the modular conjugation for $L^2(N)$. 
Observe by \cite[PROPOSITION 3.1]{Sa81} that
	$$L^2(N)=L^2(\theta_H(B^\op )')\simeq H\otimes_B \overline{H}$$ 
as $N$-bimodules, and hence as $M$-bimodules. 
Note that the $M$-bimodule structure of $H\otimes_B \overline{H}$ here is given by the left action $\pi_H(x)\otimes_B 1 $ and the right one $1\otimes_B \theta_{\overline{H}}(y^\op) $ for all $x,y \in M$. 
We regard $(\xi_i)_i$ as vectors in $H\otimes_B \overline{H}$. Then the second condition on $(\xi_i)_i$ is translated as follows: for any $a\in \mathcal{U}(A)$
	$$\|(\pi_H(a)\otimes_B 1)\xi_i-(1\otimes_B \theta_{\overline{H}}(a^\op ))\xi_i\|_2=\|(\pi_H(a)\otimes_B \theta_{\overline{H}}((a^\op )^*))\xi_i-\xi_i\|_2\rightarrow 0.$$
Using the first condition on $(\xi_i)_i$ together, we obtain
	$$\langle (\pi_H(x)\otimes_B \theta_{\overline{H}}(a^\op))\xi_i , \xi_i \rangle\rightarrow \psi(xa)= \langle x a^\op\xi_\psi , \xi_\psi \rangle_{\psi} \qquad (x\in M, a\in A),$$
where $\xi_\psi \in L^2(M,\psi)$ is the canonical cyclic vector. 
In particular the linear functional $\pi_H(x)\otimes_B \theta_{\overline{H}}(a^\op)\mapsto \langle x a^\op\xi_\psi , \xi_\psi \rangle_{\psi}$ is bounded. So by Lemma \ref{GNS lemma} we get ${}_ML^2(M,\psi)_A\prec {}_MH\otimes_B \overline{H}_A$, which is our conclusion.

	We next show the general case. So assume that $A$ is semifinite. 
Let $\nu$ be the algebraic $\ast$-homomorphism for the weak containment ${}_ML^2(M)_A \prec {}_M H\otimes_B \overline{H}_A$. We will show that $\nu$ is bounded. For this we fix $x\in \text{\rm $\ast$-alg}\{ \pi_H(M)\otimes_B 1, 1\otimes_B\theta_{\overline{H}}(A^\op) \}$ and we will show $\|\nu(x)\|_\infty \leq \|x\|_\infty$.

	Let $p\in A$ be a finite and $\sigma$-finite projection. Then the assumption (4) implies that ${}_{pMp}(pH)_B$ is left $(pAp,E_{pAp})$-injective. Since $pAp$ has a faithful normal trace, by the result we already proved, ${}_{pMp}(pH)_B$ is left semidiscrete. 
Put $p_H:=\pi_H(p)\theta_H(p^\op )$ and $\tilde{p}:=pp^\op$, and observe that left semidiscreteness of ${}_{pMp}(pH)_B$ implies 
	$$\|\tilde{p} \nu(x)\tilde{p}\|_\infty = \|\nu(p_H xp_H)\|_\infty\leq \|p_H xp_H\|_\infty.$$ 
Next take a net $(p_i)_i$ of finite and $\sigma$-finite projections in $A$ which converges to $1_A$ strongly. Taking the supremum of such $p_i$, we obtain 
	$$\|\nu(x)\|_\infty= \sup_{i}\|\tilde{p_i} \nu(x)\tilde{p_i}\|_\infty\leq  \sup_i \|{p}_{i,H} x{p}_{i,H}\|_\infty =\|x\|_\infty.$$
Here we used the following fact: for any projections $p_i\in \B(K)$ converging to 1 strongly on any Hilbert space $K$, we have $\|X\|_\infty=\sup_i\|p_iXp_i\|_\infty$ for any $X\in\B(K)$. 
Thus we get the boundedness of $\nu$ and this is the desired condition.
\end{proof}

\subsection*{\bf Continuous core approach}

	We next study relative amenability using continuous cores. The use of the continuous core is natural in our context because, as observed in Theorem \ref{relative amenable theorem2}, the tracial condition is crucial to obtain the equivalence of semidiscreteness and injectivity.

	We fix the following setting. Let $B,M$ be von Neumann algebras and $A\subset M$ a von Neumann subalgebra with expectation $E_A$. 
Let $\psi_A$ be a faithful normal semifinite weight on $A$. Put  $\psi:=\psi_A\circ E_A$ and recall that continuous cores have an embedding $C_\psi(A)\subset C_\psi(M)$. 
Let $H={}_MH_B$ be an $M$-$B$-bimodule and define a $C_\psi(M)$-$B$-bimodule ${}_MK_B$ by $K:= {}_{C_\psi(M)} L^2(C_\psi(M))\otimes_M H_B$. 
Note that, under the isomorphism 
\begin{align*}
	{}_{C_\psi(M)} L^2(C_\psi(M))\otimes_M H_B 
	&\simeq {}_{C_\psi(M)} \left(L^2(\R) \otimes L^2(M)\right)\otimes_M \left(L^2(M)\otimes_M H\right)_B \\
	&\simeq {}_{C_\psi(M)} L^2(\R) \otimes L^2(M)\otimes_M H_B \\
	&\simeq {}_{C_\psi(M)} L^2(\R) \otimes H_B ,
\end{align*}
our actions are of the forms: for $x\in M$, $t\in \R$, $y\in B$,
	$$\pi_K(x):=\pi_{\sigma^\psi}(x), \quad \pi_K(\lambda_t):=\lambda_t\otimes 1, \quad \rho_K(y^{\rm op}):=1\otimes \theta_H(y^{\rm op}),$$
where $\pi_{\sigma^\psi}(x)$ is the usual representation in crossed products given by $(\pi_{\sigma^\psi}(x)\xi)(s)=\sigma_{-s}^\psi(x)\xi(s)$ for all $x\in M$ and $\xi \in L^2(\R)\otimes H = L^2(\R, H)$.

	The following theorem establishes an equivalence of semidiscreteness and injectivity, using the bimodule ${}_M K_B$. This equivalence will be used in the proof of the main theorem.

\begin{Thm}\label{injectivity by core}
	Keep the setting as above and consider the following conditions.
	\begin{itemize}
		\item[$(1)$] The bimodule ${}_MH_B$ is left $(A,E_A)$-injective. 
		\item[$(2)$] The bimodule ${}_{C_\psi(M)}K_B$ is left $C_\psi(A)$-semidiscrete, that is, 
	$${}_{C_\psi(M)}L^2(C_\psi(M))_{C_\psi(A)} \prec {}_{C_\psi(M)}K\otimes_B \overline{K}_{C_\psi(A)} .$$
		\item[$(3)$] The bimodule ${}_{M}K_B$ is left $A$-semidiscrete, that is, 
	$${}_{M}L^2(M)_{A} \prec {}_{M}K\otimes_B \overline{K}_{A} .$$
	\end{itemize}
Then we have $(1)\Rightarrow(2)\Rightarrow(3)$. 
If we further assume that there is an operator valued weight from $\mathcal{L}_B(H_B)$ to $\pi_H(M)$, then we have $(3)\Rightarrow(1)$, so all conditions are equivalent.
\end{Thm}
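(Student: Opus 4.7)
The plan exploits the interaction between continuous cores, conditional expectations, and amenability of $\R$. The key observation is that $C_\psi(A)$ is semifinite regardless of whether $A$ is, so via Theorem~\ref{relative amenable theorem2} the semidiscreteness condition on the continuous-core side is equivalent to an injectivity condition, which I can construct by hand.

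For $(1)\Rightarrow(2)$, I would begin with the conditional expectation $E\colon \mathcal{L}_B(H_B)\to A$ given by (1) and amplify it to the normal conditional expectation $\id_{\B(L^2(\R))}\otimes E\colon \mathcal{L}_B(K_B)=\B(L^2(\R))\ovt\mathcal{L}_B(H_B)\to \B(L^2(\R))\ovt A$. Using that $\sigma_t^\psi$ preserves $A$ with $\sigma_t^\psi|_A=\sigma_t^{\psi_A}$ (Takesaki's theorem, since $E_A$ is $\psi$-preserving), one checks that the amplified map restricted to $C_\psi(M)$ coincides with $\widehat{E}_A$, taking values in $C_\psi(A)\subset\B(L^2(\R))\ovt A$. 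To land in $C_\psi(A)$ itself, identify $\B(L^2(\R))\ovt A\simeq C_\psi(A)\rtimes\hat{\R}$ via Takai--Takesaki duality, so that $C_\psi(A)$ appears as the fixed-point subalgebra of the resulting dual $\hat{\R}$-action; amenability of $\hat{\R}=\R$ then supplies, by averaging against an invariant mean, a conditional expectation $P'\colon \B(L^2(\R))\ovt A\to C_\psi(A)$. The composition $\widetilde{E}:=P'\circ(\id\otimes E)$ is a conditional expectation $\mathcal{L}_B(K_B)\to C_\psi(A)$ extending $\widehat{E}_A$ on $C_\psi(M)$, and since $C_\psi(A)$ is semifinite, Theorem~\ref{relative amenable theorem2} $(4)\Rightarrow(1)$ applied to ${}_{C_\psi(M)}K_B$ upgrades this injectivity to statement (2). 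I expect this to be the main obstacle, since bridging $\B(L^2(\R))\ovt A$ and $C_\psi(A)$ requires combining Takai--Takesaki duality with amenability of $\R$.

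For $(2)\Rightarrow(3)$, functoriality of weak containment under restriction along the unital inclusions $M\subset C_\psi(M)$ and $A\subset C_\psi(A)$ directly gives ${}_M L^2(C_\psi(M))_A\prec{}_M K\otimes_B\overline{K}_A$, so it suffices to show ${}_M L^2(M)_A\prec{}_M L^2(C_\psi(M))_A$. Under the canonical identification $L^2(C_\psi(M))\simeq L^2(\R)\otimes L^2(M)$, the fiber at each $s\in\R$ is $L^2(M)$ with both $M$-actions twisted by $\sigma_{-s}^\psi$; since twisting by an automorphism preserves the $M$-$M$-bimodule isomorphism class of $L^2(M)$, a direct-integral decomposition realizes ${}_M L^2(C_\psi(M))_M\simeq\int^\oplus_\R {}_M L^2(M)_M\,ds$. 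Restricting the right action to $A$ and noting that a direct integral of copies of a bimodule weakly contains that bimodule yields the desired containment.

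For $(3)\Rightarrow(1)$, I first extract from (3) a conditional expectation $E_K\colon \mathcal{L}_B(K_B)\to A$ with $E_K\circ\pi_K=E_A$ on $M$, using Theorem~\ref{relative amenable theorem2} $(1)\Rightarrow(4)$ applied to ${}_M K_B$. The operator-valued weight $T\colon\mathcal{L}_B(H_B)\to\pi_H(M)$ yields a faithful normal semifinite weight $\widetilde{\psi}:=\psi\circ T$ on $\mathcal{L}_B(H_B)$ whose modular flow restricts to $\sigma^\psi$ on $M$; hence $C_\psi(M)\subset C_{\widetilde{\psi}}(\mathcal{L}_B(H_B))\subset\mathcal{L}_B(K_B)$, and the crossed-product embedding $\pi_{\widetilde{\psi}}\colon\mathcal{L}_B(H_B)\hookrightarrow C_{\widetilde{\psi}}(\mathcal{L}_B(H_B))$ agrees with $\pi_K$ on $M$. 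The composition $E_H:=E_K\circ\pi_{\widetilde{\psi}}\colon\mathcal{L}_B(H_B)\to A$ is a ucp map satisfying $E_H\circ\pi_H=E_A$ on $M$ and the identity on $\pi_H(A)$ (under the natural identifications $\pi_H(A)\simeq A\simeq\pi_K(A)$); Tomiyama's theorem then promotes $E_H$ to a conditional expectation, establishing (1).
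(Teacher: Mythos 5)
Your steps $(1)\Rightarrow(2)$ and $(3)\Rightarrow(1)$ follow essentially the same route as the paper: for $(1)\Rightarrow(2)$ you amplify $E$ to $\mathcal{L}_B(H_B)\ovt\B(L^2(\R))$, push down to $C_\psi(A)$ via the Takesaki-duality identification $A\ovt\B(L^2(\R))\simeq C_\psi(A)\rtimes\widehat{\R}$ together with an invariant mean on $\widehat{\R}$, and invoke semifiniteness of $C_\psi(A)$ through Theorem \ref{relative amenable theorem2}; for $(3)\Rightarrow(1)$ you restrict the expectation on $\mathcal{L}_B(K_B)$ to the copy of $\mathcal{L}_B(H_B)$ sitting inside its own continuous core $C_{\widehat\psi}(\mathcal{L}_B(H_B))\subset\mathcal{L}_B(H_B)\ovt\B(L^2(\R))$. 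One caveat in $(1)\Rightarrow(2)$: $E$ is in general \emph{not} normal, so ``$\id_{\B(L^2(\R))}\otimes E$'' is neither automatically well defined on the von Neumann tensor product nor normal; the paper constructs this extension as a point--$\sigma$-weak cluster point of the normal maps $x\mapsto(E\otimes\id)((1\otimes p_n)x(1\otimes p_n))$ for finite-rank $p_n\to 1$, and only the restriction to $M\ovt\B(L^2(\R))$ is checked to be normal (namely $E_A\otimes\id$). This is fixable but should not be elided.

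The genuine gap is in $(2)\Rightarrow(3)$, in your claim that the fibers of $L^2(C_\psi(M))\simeq L^2(\R)\otimes L^2(M)$ are copies of $L^2(M)$ with \emph{both} $M$-actions twisted by $\sigma^\psi_{-s}$. In the identification used here the left action is $\pi_{\sigma^\psi}(x)$, i.e.\ $s\mapsto\sigma^\psi_{-s}(x)$, while the right action is $y^{\rm op}\otimes 1$, i.e.\ constant in $s$. So the fiber at $s$ is the \emph{one-sided} twist ${}_{\sigma^\psi_{-s}}L^2(M)_{\id}$, and a one-sided twist by an automorphism $\alpha$ is isomorphic to the identity bimodule if and only if $\alpha$ is inner (an intertwiner is a unitary in $(JMJ)'=M$ implementing $\alpha$). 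For $M$ of type III the modular automorphisms are outer, so the fibers are \emph{not} isomorphic to ${}_ML^2(M)_M$ and ${}_ML^2(C_\psi(M))_M$ is not a multiple of the identity bimodule (one can see the two bundles are genuinely different by computing the commutant of $M\vee JMJ$: abelian for the twisted bundle when $\sigma^\psi$ is outer, all of $1\otimes\B(L^2(\R))$ for the constant bundle). Since the non-semifinite case is the whole point of the theorem, your shortcut fails exactly where it matters. The desired weak containment ${}_ML^2(M)_M\prec{}_ML^2(C_\psi(M))_M$ is nevertheless true: the operator norm of $\sum_i\pi_{\sigma^\psi}(x_i)(y_i^{\rm op}\otimes 1)$ equals $\operatorname{ess\,sup}_t\bigl\|\sum_i\sigma^\psi_t(x_i)y_i^{\rm op}\bigr\|_\infty$, and since $t\mapsto\sum_i\sigma^\psi_t(x_i)y_i^{\rm op}$ is strongly continuous, the norm function is lower semicontinuous in $t$, so its value at $t=0$ is dominated by the essential supremum over any neighbourhood of $0$. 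Replacing your direct-integral identification by this lower-semicontinuity estimate repairs the step.
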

\begin{proof}
	(1) $\Rightarrow$ (2)
Let $e_A$ be the Jones projection for $E_A$. Observe that the compression map by $e_A$ defines a faithful normal conditional expectation from $C_\psi(M)$ onto $C_\psi(A)$, which restricts to $E_A$ on $M$. We denote this expectation again by $E_A$. We will show that ${}_{C_\psi(M)}K_B$ is left $(C_\psi(A),E_A)$-injective which is equivalent to (2) by Theorem \ref{relative amenable theorem2}. 
By assumption there is a conditional expectation $E\colon\mathcal{L}_B(H_B)\to A$ which restricts to $E_A$, where we omit $\pi_H$. 
We can then construct a conditional expectation  
	$$\widetilde{E}\colon \mathcal{L}_B(H_B) \ovt \B(L^2(\R))\to A\ovt \B(L^2(\R))$$ 
which restricts to $E_A \otimes \id$ on $M\ovt \B(L^2(\R))$. To see this, take finite rank projections $p_n\in \B(L^2(\R))$ which converges to 1 strongly. Then since $E\otimes \id$ is defined on $\mathcal{L}_B(H_B) \ovt p_n\B(L^2(\R))p_n$, we can define $\widetilde{E}$ as a cluster point of maps $x \mapsto (E\otimes \id )((1\otimes p_n)x(1\otimes p_n))$. One can directly check that $\widetilde{E}|_{M\ovt \B(L^2(\R))}$ is normal by applying any normal tensor states $\omega_1\otimes \omega_2$. Hence we get $\widetilde{E}|_{M\ovt \B(L^2(\R))}=E_A \otimes \id$.

	Observe that, by omitting $\pi_H$, $\mathcal{L}_B(H_B) \ovt \B(L^2(\R))$ and $A \ovt \B(L^2(\R))$ contain $C_\psi(M)$ and $C_\psi(A)$ respectively, and the restriction of $\widetilde{E}$ on $C_\psi(M)$ is $E_A$. 
Observe next that $A\ovt \B(L^2(\R))\simeq C_\psi(A) \rtimes \widehat{\R}$ by the Takesaki duality \cite[Theorem X.2.3]{Ta01}, so there is a conditional expectation from $A\ovt \B(L^2(\R))$ onto $C_\psi(A)$. 
By composing this expectation with $\widetilde{E}$, we get a conditional expectation from $\mathcal{L}_B(H_B) \ovt \B(L^2(\R))$ onto $C_\psi(A)$ which restricts to $E_A$ on $C_\psi(M)$. 
Finally observe that $\theta_K(B^\op)' = \mathcal{L}_B(H_B)\ovt \B(L^2(\R))$ and the inclusion $C_\psi(M)\subset \mathcal{L}_B(H_B)\ovt \B(L^2(\R))$ mentioned above coincides with the one of $\pi_K(C_\psi(M))\subset \theta_K(B^\op)'$. 
Hence we have constructed a conditional expectation from $\theta_K(B^\op)'$ onto $C_\psi(A)$ which restricts to $E_A$ on $C_\psi(M)$. This is the desired condition.

\noindent
(2) $\Rightarrow$ (3) 
	By definition, we have ${}_{M}L^2(C_\psi(M))_{A} \prec {}_{M}K\otimes_B \overline{K}_{A}$. We claim ${}_{M}L^2(M)_{M}\prec {}_{M}L^2(C_\psi(M))_{M}$, which obviously implies (3).

	To see the claim, we have to show for any $x_i,y_i\in M$, $i=1,\ldots,n$, 
	$$\left\|\sum_ix_iy_i^\op \right\|_\infty \leq \left\|\sum_i\pi_{\sigma^\psi}(x_i)(y_i^\op\otimes1)\right\|_\infty.$$
Since $\pi_{\sigma^\psi}(M)$ and $M^\op\otimes 1$ are contained in $\B(L^2(M))\ovt L^\infty(\R)\simeq L^\infty(\R,\B(L^2(M)))$, the right hand side in this inequality coincides with
	$$\text{\rm ess-}\sup_{t\in \R}\left\|\sum_i\sigma^\psi_t(x_i)y_i^\op\right\|_\infty.$$
Since the map $\R\ni t \mapsto \sum_i \sigma^\psi_t(x_i)y_i^\op$ is strongly continuous, the map $\R\ni t \mapsto \left\|\sum_i\sigma^\psi_t(x_i)y_i^\op\right\|_\infty$ is lower semi-continuous. Hence for any $\varepsilon>0$, there is $\delta>0$ such that 
	$$\left\|\sum_ix_iy_i^\op\right\|_\infty - \varepsilon \leq \left\|\sum_i\sigma^\psi_{t}(x_i)y_i^\op\right\|_\infty$$
for all $|t|<\delta$, and therefore 
	$$\left\|\sum_ix_iy_i^\op \right\|_\infty - \varepsilon\leq \left\|\sum_i\pi_{\sigma^\psi}(x_i)(y_i^\op\otimes1)\right\|_\infty.$$
Letting $\varepsilon \to 0$, the claim is proven.

\noindent
(3) $\Rightarrow$ (1) 
	Assume that there is an operator valued weight $E_M \colon \mathcal{L}_B(H_B) \to M$. Define a faithful normal semifinite weight on  $\mathcal{L}_B(H_B)$ by $\widehat{\psi}:=\psi\circ E_M$. It then holds that $\sigma_t^{\widehat{\psi}} |_M= \sigma_t^\psi$ and hence there is an inclusion $C_\psi(M) \subset C_{\widehat{\psi}}(\mathcal{L}_B(H_B))$. 
By assumption and Theorem \ref{relative amenable theorem2}, ${}_{M}K_B$ is left $A$-injective, so there is a conditional expectation 
	$$E\colon \mathcal{L}_B(K_B)= \mathcal{L}_B(H_B)\ovt \B(L^2(\R)) \to \pi_K(A)=\pi_{\sigma^\psi}(A)$$ 
which restricts to $E_A$ on $\pi_{\sigma^\psi}(M)$. Observe that $\mathcal{L}_B(H_B)\ovt \B(L^2(\R))$ contains $C_{\widehat{\psi}}(\mathcal{L}_B(H_B))$. By restriction, we have a conditional expectation from $\pi_{\sigma^{\widehat{\psi}}}(\mathcal{L}_B(H_B))$ onto $\pi_{\sigma^\psi}(A)$ which restricts to $E_A$. This means (1).
\end{proof}

In the case $A=M$, the following corollary is well known to experts but it is not explicitly written in \cite{AD93}. 
The corollary states that a conditional expectation can be approximated by normal ccp maps \textit{up to} Morita equivalence.

\begin{Cor}\label{approximation of expectation}
	Let $A\subset M \subset N$ be von Neumann algebras. Assume that there is a conditional expectation $E\colon N \to A$ which restricts to a faithful normal conditional expectation $E_A$ on $M$. 
Let $\psi_A$ be a faithful normal semifinite weight and put $\psi:=\psi_A\circ E_A$. 
Let $\pi_{\sigma^\psi}\colon M \to M\rtimes_\psi \R \subset N\ovt \B(L^2(\R))$ be the canonical embedding. 
\begin{itemize}
	\item[$\rm (1)$] There is a conditional expectation from $N\ovt \B(L^2(\R))$ onto $\pi_{\sigma^\psi}(A)$ which restricts to $E_A$ on $\pi_{\sigma^\psi}(M)$ and which is approximated by normal ccp maps from $N\ovt \B(L^2(\R))$ to $\pi_{\sigma^\psi}(A)$ in the point $\sigma$-weak topology.
	\item[$\rm (2)$] Assume further that there is an operator valued weight from $N$ to $M$. Then there is a conditional expectation from $N$ onto $A$ which restricts to $E_A$ on $M$ and which is approximated by normal ccp maps from $N$ to $A$ in the point $\sigma$-weak topology.
\end{itemize}
\end{Cor}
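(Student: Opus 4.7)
The plan is to apply Theorem \ref{injectivity by core} with the $M$-$N$-bimodule $H := L^2(N)$, so that $\mathcal{L}_N(H_N) = N$ and $\pi_H(M) = M$. In this setup, the hypothesis that there exists a conditional expectation $E \colon N \to A$ restricting to the normal expectation $E_A$ on $M$ is precisely condition (1) of Theorem \ref{injectivity by core} — the bimodule $L^2(N)$ is left $(A, E_A)$-injective. Hence condition (2) of that theorem also holds, giving the semidiscreteness weak containment ${}_{C_\psi(M)}L^2(C_\psi(M))_{C_\psi(A)} \prec {}_{C_\psi(M)}K\otimes_N \overline{K}_{C_\psi(A)}$, where $K = L^2(C_\psi(M)) \otimes_M L^2(N)$.

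For Part (1), the conditional expectation $\tilde{E}\colon N \ovt \B(L^2(\R)) \to \pi_{\sigma^\psi}(A)$ itself is constructed exactly as in the proof of Theorem \ref{injectivity by core} (1)$\Rightarrow$(2): extend $E$ to a conditional expectation $\tilde{F}\colon N \ovt \B(L^2(\R)) \to A \ovt \B(L^2(\R))$ by a finite-rank cluster-point argument, then compose with the canonical normal expectation $A \ovt \B(L^2(\R)) \simeq \pi_{\sigma^\psi}(A) \rtimes \widehat{\R} \to \pi_{\sigma^\psi}(A)$ coming from Takesaki duality. For the approximation by normal ccp maps, I would exploit the semifinite structure of $\pi_{\sigma^\psi}(A) \simeq C_{\psi_A}(A)$ together with the semidiscreteness above: a net of unit vectors $(\xi_i)$ in $K \otimes_N \overline{K}$ witnessing the weak containment induces, after the identification of $L^2$ with the conjugate bimodule and the Takesaki duality for the $\B(L^2(\R))$ factor, a net of normal ccp maps $\Phi_i$ valued in $p_i \pi_{\sigma^\psi}(A) p_i \subset \pi_{\sigma^\psi}(A)$ for suitable $\Tr$-finite projections $p_i \in \pi_{\sigma^\psi}(A)$. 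Any point-$\sigma$-weak cluster point of $(\Phi_i)$ is then a conditional expectation with the required restriction property, and by construction it is approximated by the normal ccp maps $\Phi_i$.

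For Part (2), the additional hypothesis supplies precisely the operator-valued weight $\mathcal{L}_N(L^2(N)_N) = N \to \pi_H(M) = M$ demanded by the implication (3)$\Rightarrow$(1) of Theorem \ref{injectivity by core}; consequently all three conditions of that theorem become equivalent in our setting. The same approximating-vector argument as in Part (1) then applies without needing to pass through the $\B(L^2(\R))$ factor (condition (3) gives semidiscreteness already at the level of ${}_M K_B$ with $B = N$), yielding normal ccp maps $\Phi_i \colon N \to A$ whose point-$\sigma$-weak cluster point is a conditional expectation $N \to A$ extending $E_A$.

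The main obstacle I expect is the translation step from abstract approximating vectors in the relative tensor product $K \otimes_N \overline{K}$ into concrete normal ccp maps landing in the correct corner of $\pi_{\sigma^\psi}(A)$ (respectively $A$, in Part (2)). This requires the standard but technical dictionary between bimodule weak containment and approximation of conditional expectations by normal ccp maps, with the additional book-keeping via Takesaki duality in Part (1) to ensure the approximations take values inside $\pi_{\sigma^\psi}(A)$ rather than merely inside $A \ovt \B(L^2(\R))$.
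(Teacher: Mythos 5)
Your overall strategy for part (1) --- take $H=L^2(N)$ so that $\mathcal{L}_N(H_N)=N$, note that the hypothesis says exactly that ${}_MH_N$ is left $(A,E_A)$-injective, and feed this into Theorem \ref{injectivity by core} --- is the paper's starting point. But two of your steps do not work as described. First, you repeatedly conflate $\pi_{\sigma^\psi}(A)$, which is just an isomorphic copy of $A$ and in general carries no trace and is not semifinite, with the core $C_{\psi_A}(A)$. Takesaki duality gives $A\ovt\B(L^2(\R))\simeq C_{\psi_A}(A)\rtimes\widehat{\R}$, not $\pi_{\sigma^\psi}(A)\rtimes\widehat{\R}$, so the duality expectation lands in the core and you would still need a further (non-normal) expectation $A\rtimes_{\sigma^{\psi_A}}\R\to A$; likewise ``$\Tr$-finite projections $p_i\in\pi_{\sigma^\psi}(A)$'' has no meaning. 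The paper sidesteps all of this: it uses (1)$\Rightarrow$(3) of Theorem \ref{injectivity by core} together with Sauvageot's identification $K\otimes_B\overline{K}\simeq L^2(N\ovt\B(L^2(\R)))$ to get the $M$-$A$-bimodule weak containment ${}_ML^2(M)_A\prec{}_ML^2(N\ovt\B(L^2(\R)))_A$, and then converts approximating vectors into normal cp maps $T\mapsto\sum_i\eta_i^*T\eta_i$ valued in $A$ via the W$^*$-Hilbert-module picture $X(L)=\mathrm{Hom}_A(L^2(A),L)$, normalizing to contractions by \cite[Lemma 1.6]{AD93}. Existence of the expectation and its approximability are obtained simultaneously as a cluster point of these maps; no separate Takesaki-duality construction of $\widetilde{E}$ is needed.

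The more serious gap is in part (2). Invoking (3)$\Rightarrow$(1) of Theorem \ref{injectivity by core} is circular there: condition (1) is precisely your hypothesis, so you gain nothing. Moreover, your claim that condition (3) ``gives semidiscreteness already at the level of ${}_MK_B$ with $B=N$'' without the $\B(L^2(\R))$ factor is false: by definition $K=L^2(C_\psi(M))\otimes_MH$ still carries the $L^2(\R)$ leg, and $K\otimes_N\overline{K}\simeq L^2(N\ovt\B(L^2(\R)))$, so the normal ccp maps you extract are defined on $N\ovt\B(L^2(\R))$, not on $N$. Passing from injectivity to semidiscreteness of ${}_ML^2(N)_N$ itself would require $A$ to be semifinite (Theorem \ref{relative amenable theorem2}), which is not assumed --- this is the whole reason the core enters. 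The actual role of the operator-valued weight $T\colon N\to M$ is different: one sets $\widehat{\psi}:=\psi\circ T$, so that $\sigma^{\widehat{\psi}}|_M=\sigma^\psi$ and hence $\pi_{\sigma^\psi}(A)\subset\pi_{\sigma^{\widehat{\psi}}}(N)\subset N\ovt\B(L^2(\R))$; one then simply restricts the expectation and the approximating normal ccp maps produced in part (1) to this copy of $N$. Without this compatibility of modular groups there is no reason the restriction to an arbitrary copy of $N$ would contain $\pi_{\sigma^\psi}(A)$ or restrict to $E_A$ on $M$.
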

\begin{proof}
	(1) Fix $N\subset \B(H)$ and put $B:=(N')^\op$ and $H={}_MH_B$. By assumption, there is a conditional expectation $\mathcal{L}_B(H_B)\to A$ which restricts to $E_A$. So ${}_MH_B$ is left $(A,E_A)$-injective. 
By Theorem \ref{injectivity by core} (1)$\Rightarrow$(3), we have ${}_{M}L^2(M)_{A} \prec {}_{M}K\otimes_B \overline{K}_{A}$.
Since $K\otimes_B \overline{K}$ is the standard representation of $\theta_K(B^\op)'=N\ovt \B(L^2(\R))=:\widetilde{N}$ \cite[PROPOSITION 3.1]{Sa81}, we have
	$${}_{M}L^2(M)_{A} \prec {}_{M}K\otimes_B \overline{K}_{A} = {}_{M}L^2(\widetilde{N})_{A} = {}_{M}L^2(\widetilde{N})\otimes_{\widetilde{N}}L^2(\widetilde{N})_{A} .$$
This means ${}_{M}L^2(\widetilde{N})_{\widetilde{N}}$ is left $A$-semidiscrete. 
For any right $A$-module $L=L_A$, we denote by $X(L):=\mathrm{Hom}_{A}(L^2(A),L)$ the set of all bounded linear maps from $L^2(A)$ to $L$ which commute with right $A$-actions and  define an $A$-valued inner product by $\langle T ,S\rangle_{X(L)}:=T^*S \in (A^\op)'=A$. 
See \cite[Preliminaries]{AD93} for the relation of bimodules and W$^*$-Hilbert modules as well as for general theories of them. 
Then, the above weak containment is equivalent to  
	$$ {}_MX(L^2(M)) \prec {}_MX(L^2(\widetilde{N})) ,$$
where we are thinking them as Hilbert $A$-modules with left $M$-actions. Here $X(L^2(M))$ is identified as $M$ with the inner product $\langle x,y\rangle_{X(L^2(M))}=E_A(x^*y)$ for $x,y\in M$. By the weak containment, for the vector $1_M \in X(L^2(M))$, any $\sigma$-weak neighborhood $\mathcal V$ of 0, and any finite subset $\mathcal{E}\subset M$, there are vectors $\eta_i\in X(L^2(\widetilde{N}))$, $i=1,\ldots,n$, such that 
	$$ \langle x1_M , 1_M \rangle_{X(L^2(M))} - \sum_{i=1}^n\langle x\eta_i , \eta_i \rangle_{X(L^2(\widetilde{N}))} \in \mathcal V$$
for all $x\in \mathcal{E}$. 
We define a normal completely positive map from $\widetilde{N}$ into $A$  by 
	$$\varphi_{(\mathcal V, \mathcal E)}(T) := \sum_{i=1}^n\langle T\eta_i , \eta_i \rangle_{X(L^2(\widetilde{N}))}.$$
Observe that for any $x\in \mathcal{E}$, 
	$$ E_A(x) - \varphi_{(\mathcal V, \mathcal E)}(x) \in \mathcal V.$$
Hence letting $\mathcal{E}$ larger and $\mathcal V$ smaller, we have that $\varphi_{(\mathcal V, \mathcal E)}(x) \to E_A(x)$ $\sigma$-weakly for any fixed $x\in M$. By \cite[Lemma 1.6]{AD93}, regarding $\varphi_{(\mathcal V, \mathcal E)}$ as cp maps from $M$ into A, up to convex combinations and up to transforms $\varphi_{(\mathcal V, \mathcal E)} \mapsto b\varphi_{(\mathcal V, \mathcal E)} b^*$ for $b\in A$, we may assume that $\varphi_{(\mathcal V, \mathcal E)}(1)\leq E_A(1)=1$, hence ccp maps. 
Since the resulting ccp maps are still finite sums of  $M \ni x \mapsto \langle x\eta , \eta \rangle_{X(L^2(\widetilde{N}))}$ for $\eta\in X(L^2(\widetilde{N}))$, we can again regard $\varphi_{(\mathcal V, \mathcal E)}$ as normal cp maps from $\widetilde{N}$ to $A$, which are ccp by conditions $\varphi_{(\mathcal V, \mathcal E)}(1)\leq E_A(1)=1$. 
Finally any cluster point of $\varphi_{(\mathcal V, \mathcal E)}$ is a conditional expectation from $\widetilde{N}$ onto $A$ which restricts to $E_A$. Hence we can find a desired net of ccp maps as a subnet of $(\varphi_{(\mathcal V, \mathcal E)})_{(\mathcal V, \mathcal E)}$.

\noindent
(2) Take a faithful normal semifinite weight $\widehat{\psi}$ on $N$ such that $\sigma_t^{\widehat{\psi}} |_M= \sigma_t^\psi$, so that we have inclusions $C_\psi(M) \subset C_{\widehat{\psi}}(N)\subset N\ovt \B(L^2(\R))$. Then take a conditional expectation constructed in (1) and restrict it on $\pi_{\widehat{\psi}}(N)$. We get a conditional expectation from $\pi_{\widehat{\psi}}(N)$ onto $\pi_{\psi}(A)$ which restricts to $E_A$ on $\pi_\psi(M)$ and which is approximated by normal ccp maps. This is the conclusion.
\end{proof}

\bigskip

\small{

}

\begin{thebibliography}{AHHM18}
\addcontentsline{toc}{section}{Reference}

%
\bibitem[AD93]{AD93} C. Anantharaman-Delaroche, \textit{Amenable correspondences and approximation properties for von Neumann algebras}. Pacific J.\ Math.\ {\bf 171} (1995), 309--341.
%

\bibitem[AHHM18]{AHHM18} H. Ando, U. Haagerup, C. Houdayer and A. Marrakchi,  \textit{Structure of bicentralizer algebras and inclusions of type III factors}. Preprint 2018. ArXiv:1804.05706.


\bibitem[AW68]{AW68} H. Araki and E. J. Woods \textit{A classification of factors}. Publ. RIMS, Kyoto Univ. Ser. A Vol. {\bf 3} (1968), 51--130.

\bibitem[BHR12]{BHR12} R.~Boutonnet, C.~Houdayer, and S.~Raum, \textit{Amalgamated free product type \textrm{III} factors with at most one Cartan subalgebras}. Compos.\ Math.\ {\bf 150} (2014), 143--174.
%
\bibitem[BH16]{BH16} R.~Boutonnet and C.~Houdayer, \textit{Amenable absorption in amalgamated free product von Neumann algebras.} To appear in Kyoto J. Math. ArXiv:1606.00808.

\bibitem[BO08]{BO08} N.~P.~Brown and N.~Ozawa, \textit{C$^*$-algebras and finite-dimensional approximations}. Graduate Studies in Mathematics, 88. American Mathematical Society, Providence, RI, 2008.
%
\bibitem[CH08]{CH08} I.~Chifan and C.~Houdayer, \textit{Bass-Serre rigidity results in von Neumann algebras}, Duke Math.\ J.\ {\bf 153} (2010), 23--54.
%

\bibitem[Co72]{Co72} A.~Connes, \textit{Une classification des facteurs de type III.} Ann. Sci. \'Ecole Norm. Sup. (4) {\bf 6} (1973), 133--252.

\bibitem[Co75]{Co75} A.~Connes, \textit{Classification of injective factors}. Ann.\ of Math.\ (2) {\bf 104} (1976), 73--115.
%
\bibitem[CS76]{CS76} A.~Connes and E. St$\rm \o$rmer, \textit{Homogeneity of the state space of factors of type $\rm III_1$}. J. Funct. Anal. {\bf 28} (1978), no. 2, 187--196.

\bibitem[Ge96]{Ge96} L. Ge, \textit{Applications of free entropy to  finite von Neumann algebras, $\rm II$.} Ann. of Math., {\bf 147} (1998), 143--157. 

\bibitem[Ha77a]{Ha77a} U. Haagerup, \textit{Operator valued weights in von Neumann algebras}, I. J. Funct. Anal. {\bf 32} (1979), 175--206.
%
\bibitem[Ha77b]{Ha77b} U. Haagerup, \textit{Operator valued weights in von Neumann algebras}, II. J. Funct. Anal. {\bf 33} (1979), 339--361.
%
\bibitem[Ha85]{Ha85} U. Haagerup, \textit{Connes' bicentralizer problem and uniqueness of the injective factor of type $\rm III_1$.} Acta Math. {\bf 158} (1987), no. 1-2, 95--148.

\bibitem[HR10]{HR10} C.~Houdayer and E.~Ricard, \textit{Approximation properties and absence of Cartan subalgebra for free Araki-Woods factors}. Adv.\ Math.\ {\bf 228} (2011), 764--802.
%
\bibitem[HI15]{HI15} C.~Houdayer and Y. Isono, \textit{Unique prime factorization and bicentralizer problem for a class of type $\rm III$ factors}. Adv. Math. {\bf 305} (2017), 402--455.
%
\bibitem[HU15a]{HU15a} C. Houdayer and Y. Ueda, \textit{Asymptotic structure of free product von Neumann algebras}. Math. Proc. Cambridge Philos. Soc. {\bf 161} (2016), 489--516.

\bibitem[HU15b]{HU15b} C. Houdayer and Y. Ueda, \textit{Rigidity of free product von Neumann algebras.} Compos. Math. {\bf 152} (2016), 2461--2492.

\bibitem[IPP05]{IPP05} A.~Ioana, J.~Peterson and S.~Popa, \textit{Amalgamated free products of weakly rigid factors and calculation of their symmetry groups}, Acta Math.\ {\bf 200} (2008), 85--153.

\bibitem[Io12]{Io12} A.~Ioana, \textit{Cartan subalgebras of amalgamated free product $\rm II_1$ factors} (with an appendix joint with S.~Vaes). Ann.\ Sci.\ \'{E}cole Norm.\ Sup. {\bf 48} (2015), 71--130.

\bibitem[Is12a]{Is12a} Y.~Isono, \textit{Examples of factors which have no Cartan subalgebras}. Trans. Amer. Math. Soc. {\bf 367} (2015), 7917--7937.
%
\bibitem[Is12b]{Is12b} Y.~Isono, \textit{Weak Exactness for $C^*$-algebras and Application to Condition $\rm (AO)$}, J.\ Funct.\ Anal.\ {\bf 264} (2013), 964--998.

\bibitem[Is13]{Is13} Y.~Isono, \textit{On bi-exactness of discrete quantum groups}. Int. Math. Res. Not. IMRN 2015, no. 11, 3619--3650.
%
\bibitem[Is14]{Is14} Y.~Isono, \textit{Some prime factorization results for free quantum group factors}.  J.\ Reine Angew.\ Math. {\bf 722} (2017), 215--250.
%
\bibitem[Is16a]{Is16a} Y.~Isono, \textit{Cartan subalgebras of tensor products of free quantum group factors with arbitrary factors}. Preprint 2016. ArXiv:1607.02847.

\bibitem[Is16b]{Is16b} Y.~Isono, \textit{On fundamental groups of tensor product $\rm II_1$ factors.} Preprint 2016. ArXiv:1608.06426.


\bibitem[Kr75]{Kr75} W.~Krieger, \textit{On ergodic flows and the isomorphism of factors.} Math.\ Ann.\ {\bf 223} (1976), 19--70.

\bibitem[Oz03]{Oz03} N.~Ozawa, \textit{Solid von Neumann algebras}. Acta Math.\ {\bf 192} (2004), 111--117.
%
\bibitem[OP03]{OP03} N.~Ozawa and S.~Popa, \textit{Some prime factorization results for type $\rm II_1$ factors}, Invent.\ Math.\ {\bf 156} (2004), 223--234.  
%
\bibitem[OP07]{OP07} N.~Ozawa and S.~Popa, \textit{On a class of $\rm II_1$ factors with at most one Cartan subalgebra}. Ann.\ of Math.\ (2), {\bf 172} (2010), 713--749.
%
\bibitem[Pe06]{Pe06} J. Peterson, \textit{L$^2$-rigidity in von Neumann algebras.} Invent. Math. {\bf 175} (2009), no. 2, 417--433.


\bibitem[Po81]{Po81} S.~Popa, \textit{On a problem of R. V. Kadison on maximal abelian $\ast$-subalgebras in factors}. Invent. Math. {\bf 65} (1981), no. 2, 269--281.


\bibitem[Po83]{Po83} S.~Popa, \textit{Orthogonal pairs of $\ast$-subalgebras in finite von Neumann algebras.} J. Operator Theory {\bf 9} (1983), no. 2, 253–-268.


\bibitem[Po01]{Po01} S.~Popa, \textit{On a class of type $\rm II_1$ factors with Betti numbers invariants}, Ann.\ of Math.\ {\bf 163} (2006), 809--899.
%
\bibitem[Po03]{Po03} S.~Popa, \textit{Strong rigidity of $\rm II_1$ factors arising from malleable actions of w-rigid groups $\rm I$}. Invent.\ Math.\ {\bf 165} (2006), 369--408.
%
\bibitem[PV11]{PV11} S.~Popa and S.~Vaes, \textit{Unique Cartan decomposition for $\rm II_1$ factors arising from arbitrary actions of free groups}. Acta Math.\  {\bf 212} (2014), 141--198.
%
\bibitem[Po67]{Po67} R. T. Powers \textit{Representations of uniformly hyperfinite algebras and their associated von Neumann rings}. Ann. of Math. {\bf 86} (1967), 138--171.


\bibitem[Sa81]{Sa81} J.~Sauvageot, \textit{Sur le produit tensoriel relatif d'espaces de Hilbert.} J. Operator Theory {\bf 9} (1983), no. 2, 237--252.
%
\bibitem[Ta01]{Ta01} M.~Takesaki, \textit{Theory of operator algebras $\rm II$}. Encyclopedia of Mathematical Sciences, 125. Operator Algebras and Non-commutative Geometry, 5. Springer-Verlag, Berlin, 2002.
%
\bibitem[Ue10]{Ue10} Y.~Ueda, \textit{Factoriality, type classification and fullness for free product von Neumann algebras}, Adv.\ Math.\ {\bf 228} (2011), 2647--2671. 
%

\end{thebibliography}
\end{document}